\documentclass[a4paper,12pt]{article}
\usepackage{amsmath, amsthm, amstext}
\usepackage{amssymb, array, amsfonts}
\usepackage[utf8]{inputenc}
\usepackage{graphicx}

\numberwithin{equation}{section}

\def \al{\alpha}

\def \ga{\gamma}
\def \de{\delta}
\def \er{\varepsilon}
\def \ze{\zeta}
\def \ka{\varkappa}
\def \la{\lambda}

\def \te{\theta}
\def \ph{\varphi}
\def \oo{\omega}

\def \G{\Gamma}
\def \D{\Delta}
\def \L{\Lambda}
\def \O{\Omega}

\def \C{\mathbb{C}}
\def \N{\mathbb{N}}
\def \R{\mathbb{R}}

\def\n{\nabla}
\def\dd{\partial}

\def\1{1\!\!\!\!1}

\def\dom{\operatorname{Dom}}

\def\mat{\operatorname{Mat}}
\def\mes{\operatorname{mes}}

\def\spec{\operatorname{spec}}

\def\re{\operatorname{Re}}
\newcommand{\<}{\langle}
\renewcommand{\>}{\rangle}

\theoremstyle{plain}
\newtheorem{theorem}{\bf Theorem}[section]
\newtheorem{lemma}[theorem]{\bf Lemma}

\newtheorem{cor}[theorem]{\bf Corollary}

\theoremstyle{definition}

\theoremstyle{remark}
\newtheorem{rem}[theorem]{\bf Remark}

\newtheorem{conj}[theorem]{\bf Conjecture}

\renewcommand{\le}{\leqslant}
\renewcommand{\ge}{\geqslant}

\renewcommand{\qed}{\vrule height7pt width5pt depth0pt}

\title{Inequalities between Dirichlet and Neumann eigenvalues in large dimensions}
\author{N.~D.~Filonov}
\date{}
\begin{document}
\maketitle

\begin{abstract}
Let $\O$ be a bounded domain in $\R^d$.
Denote by $\la_k$ (resp. $\mu_k$)
the eigenvalues of the Laplace operator in $\O$ with Dirichlet (resp. Neumann)
boundary conditions. 
Denote by $\Psi = \Psi (d,k,\O)$ the shift of indices in the inequality 
$\mu_{k+\Psi} \le \la_k$.
We are interested to describe the behaviour of $\Psi$ for large $d$.
We prove that
a) $\Psi (d,1,\O) \ge C (e/2)^d$ for all domains $\O$; and
b) $\Psi (d,k,\O) \ge C (e/2)^d$ for all $k$ and all convex domains $\O$.
\footnote{Keywords: 
Laplace operator, Dirichlet problem, Neumann problem,
inequalities for eigenavlues, convex domains}
\end{abstract}

\section{Introduction}
Let $d \ge 2$, $\O \subset \R^d$ be a bounded domain 
such that the embedding $W_2^1 (\O) \subset L_2 (\O)$ is compact,
where $W_2^1 (\O)$ is the Sobolev space.
We consider Dirichlet and Neumann problems for the Laplace operator in $\O$:
$$
\begin{cases}
- \D \ph_k = \la_k \ph_k, \\
\left.\ph_k\right|_{\dd\O} = 0,
\end{cases}
\qquad
\begin{cases}
- \D \psi_k = \mu_k \psi_k, \\
\left.\frac{\dd\psi_k}{\dd\nu}\right|_{\dd\O} = 0.
\end{cases}
$$
Spectra of these two problems are discrete,
the eigenvalues form two sequences
\begin{equation*}
0 < \la_1 < \la_2 \le \la_3 \le \dots, \qquad \la_k \to + \infty, 
\end{equation*}
\begin{equation*}
0 = \mu_1 < \mu_2 \le \mu_3 \le \dots, \qquad \mu_k \to + \infty.
\end{equation*}
We take multiplicity into account.
It is clear from the min-max principle that $\mu_k \le \la_k$ always.
Many papers are devoted to the improvements of this inequality,
see \cite{P52, Payne55, Av86, LW86, Fr91, F04, R25}.
In particular, the inequality
\begin{equation}
\label{11}
\mu_{k+d} \le \la_k \qquad \forall \ k \in \N, \ 
\text{for all convex domains} \ \O
\end{equation}
is proved in \cite{LW86}.
The last sentence of this paper says

{\it perhaps \eqref{11} can be replaced by a better inequality of the form
$$
\mu_{\Phi(d,k)} < \la_k
$$
for convex $d$-dimensional domains.}

We show that in large dimensions this inequality indeed can be improved,
see Theorem \ref{t14} below.

Introduce notations
$$
\Phi (d, k, \O) := \# \left\{ j : \mu_j (\O) \le \la_k (\O)\right\},
$$
$$
\Psi (d, k, \O) := \Phi (d, k, \O) - k
$$
in such a way that the inequality
$$
\mu_{k + \Psi(d,k,\O)} \le \la_k
$$
holds true by definition.
In these terms Friedlander's inequality 
$\mu_{k+1} < \la_k$ (see \cite{Fr91, F04}) means 
\begin{equation}
\label{*}
\Psi (d,k,\O) \ge 1
\end{equation}
for all $k$ and all domains.
Levine-Weinberger's inequality means $\Psi (d,k,\O) \ge d$ 
for all $k$ and all {\it convex} domains.

Our first result concerns the cases $k=1$ and $k=2$.

\begin{theorem}
\label{t11}
The inequalities
\begin{equation}
\label{12}
\Phi (d,1,\O) \ge \frac{(j_{d/2-1})^d}{(d+2) 2^{d-1} \G(d/2+1)^2}
\end{equation}
and
\begin{equation*}
\Phi (d,2,\O) \ge \frac{(j_{d/2-1})^d}{(d+2) 2^{d-2} \G(d/2+1)^2}.
\end{equation*}
hold true for all domains $\O$.
Here and everywhere below we denote by $j_\nu$ the first positive root
of the Bessel function $J_\nu$. 
\end{theorem}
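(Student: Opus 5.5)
The plan is to combine a Berezin--Li--Yau type lower bound for the Neumann Riesz mean (Kröger's inequality) with the isoperimetric lower bounds for $\la_1$ and $\la_2$. For $\la>0$ let $N(\la)=\#\{j:\mu_j\le\la\}$, so that $\Phi(d,k,\O)=N(\la_k)$. Since $0\le(\la-\mu_j)_+\le\la$, and the summand vanishes unless $\mu_j\le\la$, we have
$$
\la N(\la)\ge\sum_j(\la-\mu_j)_+ .
$$
Hence it suffices to bound the Riesz mean from below and then take $\la=\la_k$.

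\textbf{Step 1 (Kröger-type bound).} I will prove
$$
\sum_j(\la-\mu_j)_+\ \ge\ (2\pi)^{-d}|\O|\int_{|\xi|^2<\la}(\la-|\xi|^2)\,d\xi
=\frac{2}{d+2}\,(2\pi)^{-d}\oo_d|\O|\,\la^{1+d/2},
$$
where $\oo_d=\pi^{d/2}/\G(d/2+1)$ is the volume of the unit ball. This uses plane waves $e_\xi(x)=e^{i\xi x}$ restricted to $\O$, which lie in $W_2^1(\O)$.
\begin{enumerate}
\item Let $P$ be the spectral projection of the Neumann Laplacian onto eigenvalues $\le\la$, and let $B=\{|\xi|^2<\la\}$. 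Write $\|P e_\xi\|^2=\sum_{\mu_j\le\la}|(e_\xi,\psi_j)|^2$.
\item By Parseval for the Fourier transform (extend $\psi_j$ by zero outside $\O$), $\int_{\R^d}|(e_\xi,\psi_j)|^2d\xi=(2\pi)^d$ and $\int_{\R^d}|\xi|^2|(e_\xi,\psi_j)|^2d\xi=(2\pi)^d\mu_j$. The second identity holds because $(e_\xi,\psi_j)$ is the Fourier transform of $\psi_j\1_\O$; integrating by parts in $\O$ (no boundary term, since $\psi_j$ satisfies Neumann conditions and $e_\xi$ is smooth) relates $\xi(e_\xi,\psi_j)$ to $(e_\xi,\n\psi_j)$. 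Doing this carefully, or via the quadratic form, is the technical point of this step.
\item For each $\xi$ we have the spectral inequality
$$
\sum_j(\la-\mu_j)|(e_\xi,\psi_j)|^2\le\sum_{\mu_j\le\la}(\la-\mu_j)|(e_\xi,\psi_j)|^2 .
$$
The standard Kröger/Laptev argument then integrates over $\xi\in B$ and uses $\|e_\xi\|^2=|\O|$ and $\|\n e_\xi\|^2=|\xi|^2|\O|$, which gives $\int_B(\la-|\xi|^2)|\O|\,d\xi\le(2\pi)^d\sum_j(\la-\mu_j)_+$.
\end{enumerate}
The main obstacle is precisely this step: organizing the Parseval bookkeeping so that only the ball $B$ contributes on the left and the full $\xi$-integrals appear on the right. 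I would first try to follow Kröger's variational argument with the test space $\{\int_B g(\xi)e_\xi\,d\xi\}$ directly.

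\textbf{Step 2 (isoperimetric input).} By Faber--Krahn, $\la_1(\O)\ge j_{d/2-1}^2(\oo_d/|\O|)^{2/d}$. By Krahn--Szegő (the minimizer of $\la_2$ is two equal balls), $\la_2(\O)\ge 2^{2/d}j_{d/2-1}^2(\oo_d/|\O|)^{2/d}$. These hold for arbitrary bounded domains through the symmetrization/min-max formulation.

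\textbf{Step 3 (combine).} Dividing Step 1 by $\la$ gives
$$
N(\la_k)\ge\frac{2}{d+2}(2\pi)^{-d}\oo_d|\O|\la_k^{d/2}.
$$
For $k=1$, Step 2 gives $|\O|\la_1^{d/2}\ge j_{d/2-1}^d\oo_d$, so
$$
\Phi(d,1,\O)\ge\frac{2\oo_d^2j_{d/2-1}^d}{(d+2)(2\pi)^d}=\frac{j_{d/2-1}^d}{(d+2)2^{d-1}\G(d/2+1)^2},
$$
using $\oo_d^2=\pi^d/\G(d/2+1)^2$. This is \eqref{12}. For $k=2$ the extra factor $(2^{2/d})^{d/2}=2$ from Krahn--Szegő yields the second inequality.
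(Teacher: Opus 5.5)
Your proposal is correct and follows the paper's route: Kr\"oger's bound $N_{\cal N}(\O,\la)\ge\frac{2}{d+2}(2\pi)^{-d}\oo_d\mes_d\O\,\la^{d/2}$ at $\la=\la_1,\la_2$, combined with Faber--Krahn and Krahn--Szego. The only difference is that the paper cites Kr\"oger's inequality, while you rederive it from the Riesz-mean bound by dividing by $\la$, which is valid.

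One correction inside your Step 1. The second identity in item 2, $\int_{\R^d}|\xi|^2|(e_\xi,\psi_j)|^2\,d\xi=(2\pi)^d\mu_j$, is false for Neumann eigenfunctions. A single integration by parts leaves the boundary term $\int_{\dd\O}e_\xi\overline{\psi_j}\,\nu\,dS$, which the Neumann condition does not kill. For the constant eigenfunction $\psi_1$ the left side is $+\infty$ while $\mu_1=0$, since its zero extension is not in $W_2^1(\R^d)$. That identity belongs to the Dirichlet (Li--Yau) argument.

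You do not need it, though. Item 3 only uses two facts:
\begin{itemize}
\item $\sum_j\mu_j|(e_\xi,\psi_j)|^2=\|\n e_\xi\|^2=|\xi|^2\mes_d\O$, which holds because $e_\xi\in W_2^1(\O)$ lies in the Neumann form domain;
\item $\int_{\R^d}|(e_\xi,\psi_j)|^2\,d\xi=(2\pi)^d$.
\end{itemize}
With these, item 3 already gives the Riesz-mean bound, so you should simply drop the second Parseval identity.
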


\begin{cor}
\label{c12}
There is an absolute constant $C_1 > 0$ such that
\begin{equation}
\label{13} 
\Phi (d,1,\O) \ge C_1 \left(\frac{e}2\right)^d \qquad \forall \ \O.
\end{equation}
\end{cor}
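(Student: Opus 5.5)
The corollary should follow from the bound \eqref{12} of Theorem \ref{t11} once we know the asymptotics of its right-hand side as $d \to \infty$. So the plan is to show that
$$
R(d) := \frac{(j_{d/2-1})^d}{(d+2)\, 2^{d-1}\, \G(d/2+1)^2}
$$
satisfies $R(d) \ge c\,(e/2)^d$ for all large $d$. For the finitely many remaining $d$ we use \eqref{*}, i.e. $\Phi(d,1,\O) \ge 2$: on any finite set of dimensions $2 \ge C_1 (e/2)^d$ holds once $C_1$ is chosen small enough. Together these give \eqref{13} with an absolute constant $C_1$.

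The key input is a lower bound for the Bessel zero. I would use the classical inequality $j_\nu > \nu$, or better the sharper $j_\nu \ge \nu + a\nu^{1/3}$ with some $a>0$ for $\nu \ge 1$ (Olver's asymptotics give $j_\nu = \nu + 1.8557\,\nu^{1/3} + O(\nu^{-1/3})$). With $\nu = d/2 - 1$ the crude bound gives $(j_\nu)^d \ge (d/2-1)^d = (d/2)^d (1-2/d)^d \sim e^{-2}(d/2)^d$. For the denominator, Stirling gives
$$
\G(d/2+1)^2 \sim \pi d\,(d/2)^d e^{-d}.
$$
Hence
$$
R(d) \gtrsim \frac{e^{-2}(d/2)^d}{(d+2)\,2^{d-1}\,\pi d\,(d/2)^d e^{-d}} \asymp \frac{(e/2)^d}{d^2}.
$$

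The main obstacle is this polynomial loss $d^{-2}$, which the crude bound $j_\nu>\nu$ cannot remove. It is recovered from the $\nu^{1/3}$ correction:
$$
(j_\nu/\nu)^d \ge \bigl(1 + a\nu^{-2/3}\bigr)^d = \exp\bigl(d\log(1+a\nu^{-2/3})\bigr) \ge \exp\bigl(c\, d^{1/3}\bigr),
$$
and $\exp(c\,d^{1/3})$ grows faster than any polynomial, so it absorbs the factor $d^2$. We then get $R(d) \ge c\,(e/2)^d \exp(c' d^{1/3})/d^2 \ge c''(e/2)^d$ for large $d$.

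So the step needing care is to quote a rigorous, non-asymptotic lower bound of the form $j_\nu \ge \nu + a\nu^{1/3}$ valid for all $\nu \ge \nu_0$. Known explicit results supply this; for instance, Qu–Wong give $j_\nu > \nu - a_1 2^{-1/3}\nu^{1/3}$, where $a_1<0$ is the first zero of the Airy function, which yields $a \approx 1.8557$. The Stirling estimates should be taken as two-sided inequalities with explicit constants, so that no hidden dependence on $d$ enters $C_1$.
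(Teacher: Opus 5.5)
Your proposal is correct and follows the paper's proof. You insert the Bessel-zero bound with its $\nu^{1/3}$ correction, together with Stirling's formula, into \eqref{12}, and the resulting $|a_1|2^{1/3}d^{1/3}$ term in the exponent absorbs the polynomial ($O(\log d)$) losses. The differences are cosmetic: the paper uses the asymptotic expansion of $j_\nu$ rather than the explicit Qu--Wong inequality (it uses that inequality later, for Corollary \ref{c13}), and it leaves the finitely many small $d$ implicit, whereas you handle them explicitly via \eqref{*}.
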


\begin{cor}
\label{c13}
For all domains $\O \in \R^d$ we have
$$
\mu_3 < \la_1 \qquad \text{if} \ \ d \ge 7,
$$
and
$$
\mu_4 < \la_2 \qquad \text{if} \ \ d \ge 6.
$$
\end{cor}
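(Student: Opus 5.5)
The plan is to substitute the specific dimensions into the two bounds of Theorem \ref{t11} and evaluate them numerically. Since $\Phi$ is integer valued, a lower bound strictly larger than $N$ gives $\Phi\ge N+1$. By definition of $\Phi$ this means $\mu_{N+1}\le\la_k$. This yields the non-strict versions of both claims; strictness is a separate step, treated last.

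For the first claim take $d=7$, $k=1$, so we need $\Phi(7,1,\O)\ge 3$. The relevant constants are $j_{5/2}\approx 5.7635$ and $\G(9/2)=\tfrac{105}{16}\sqrt\pi\approx 11.632$. Then the right-hand side of \eqref{12} is
$$
\frac{(j_{5/2})^7}{9\cdot 2^6\,\G(9/2)^2}\approx\frac{2.11\cdot 10^5}{576\cdot 135.3}\approx 2.7>2,
$$
so $\Phi(7,1,\O)\ge 3$. The only delicate point is making this evaluation rigorous. Since $J_{5/2}$ is elementary, $j_{5/2}$ is the first positive root of $\tan x = 3x/(3-x^2)$, and a sign check of $J_{5/2}$ on a short interval certifies $j_{5/2}>5.76$.

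For $d\ge 8$ I would not recompute each case. Instead I would use that the ratio of consecutive values of the right-hand side of \eqref{12} exceeds $1$ for $d\ge 7$. This follows from $j_\nu\ge \nu+1$ (in fact $j_\nu>\nu+1.85\nu^{1/3}$) together with the Stirling behaviour $\G(d/2+1)^2\sim (d/2e)^d\pi d$, which gives growth like $(e/2)^d$, as in Corollary \ref{c12}. Alternatively, one could verify $d=7,8,9$ directly and then invoke the asymptotic monotonicity. For the second claim take $d=6$, $k=2$. Here $j_2\approx 5.1356$ and $\G(4)=6$, so the bound is
$$
\frac{(j_2)^6}{8\cdot 2^4\cdot 36}\approx\frac{18330}{4608}\approx 3.98>3,
$$
and therefore $\Phi(6,2,\O)\ge 4$. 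Larger $d$ are handled by the same monotonicity argument.

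The main obstacle is strictness: $\Phi$ counts $\mu_j\le\la_k$, so the above only gives $\mu_3\le\la_1$ and $\mu_4\le\la_2$. The most promising route is to inspect the proof of Theorem \ref{t11}. It presumably builds a family of Neumann test functions (products of $\ph_k$ with plane waves or Bessel-type functions) spanning a space of dimension at least the stated bound, on which the Neumann Rayleigh quotient is at most $\la_k$. As in Friedlander's argument, one would check that the quotient is in fact strictly less than $\la_k$, since equality would force a test function to satisfy both boundary conditions. Failing that, I would use that the margins are substantial ($2.7$ versus $2$, and $3.98$ versus $3$), perturb the construction slightly, and argue by contradiction when $\mu_{N+1}=\la_k$.
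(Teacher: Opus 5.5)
Your numerics are correct and match the paper's table: $f_1(7)\approx 2.71>2$ gives $\Phi(7,1,\O)\ge 3$, and $2f_1(6)\approx 3.98>3$ gives $\Phi(6,2,\O)\ge 4$. The genuine gap is strictness, the step you flag yourself.

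\textbf{Strictness.} Your route (a) rests on a wrong guess about Theorem \ref{t11}. Its proof builds no test functions. It combines Kr\"oger's bound $N_{\cal N}(\O,\la)\ge \frac{2}{d+2}\frac{\oo_d\mes_d\O}{(2\pi)^d}\la^{d/2}$, used as a black box, with the Faber--Krahn (resp.\ Krahn--Szego) lower bound on $\la_1$ (resp.\ $\la_2$). So there is no family of Rayleigh quotients to sharpen, and a Friedlander-type ``equality would force both boundary conditions'' argument has nothing to act on. Your route (b) names the right resource, the strict numerical margin, but not the mechanism.

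The missing observation is that Kr\"oger's bound holds for every $\la>0$ and is continuous in $\la$, so it can be evaluated slightly below $\la_1$. Let $L=j_{d/2-1}^2\oo_d^{2/d}(\mes_d\O)^{-2/d}$ be the Faber--Krahn bound, and put $\tilde\er=L\bigl(1-(1-\er)^{2/d}\bigr)$. Then $\la_1-\tilde\er\ge L(1-\er)^{2/d}$, so $N_{\cal N}(\O,\la_1-\tilde\er)\ge f_1(d)(1-\er)$. This exceeds $2$ for small $\er$ precisely because $f_1(d)>2$ strictly. Hence $\mu_3\le\la_1-\tilde\er<\la_1$, and the same argument with $2^{2/d}L$ gives $\mu_4<\la_2$.

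\textbf{Extension to all larger $d$.} Your claim that $f_1(d+1)/f_1(d)>1$ for $d\ge 7$ is not established by the ingredients you cite. The ratio contains $j_{(d-1)/2}^{\,d+1}/j_{d/2-1}^{\,d}$, so you would need an upper bound on $j_{d/2-1}$ as well as a lower one. The $(e/2)^d$ asymptotics give no explicit threshold either.

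The paper avoids this by passing to an explicit minorant $f_2\le f_1$. It is obtained from $j_\nu\ge\nu+|a_1|2^{-1/3}\nu^{1/3}$ and an upper Stirling bound for $\G(d/2+1)$. The paper proves $f_2$ increasing for $d\ge 8$ (Lemma \ref{l22}), uses $f_2(11)\approx 2.37$, and tabulates $f_1$ directly for $6\le d\le 10$. Checking only $d=7,8,9$ would not suffice, because $f_2$ is much smaller than $f_1$ for moderate $d$. Since only $f_1(d)>2$ (resp.\ $2f_1(d)>3$) is needed, monotonicity of the minorant is enough, and the strict margin then feeds the $\la_1-\tilde\er$ argument above.
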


See also the table at the end of the proof of Corollary \ref{c13} below.

Note also the elegant paper by L.~Hatcher \cite{Hatcher}.
Answering the question posed in \cite{CMS} on the estimates of $\Phi(d,1,\O)$
in geometric terms, he showed that
$$
c_1(d) \,\frac{\mes_{d-1}(\dd\O)^d}{\mes_d (\O)^{d-1}}
\le \Phi(d,1,\O) \le 
c_2(d) \,\frac{\mes_{d-1}(\dd\O)^d}{\mes_d (\O)^{d-1}}
$$
for all convex domains $\O$.
The constants $c_1$ and $c_2$ depend only on $d$.
However, $\log c_1 (d)$ is of order of $d^2$ for large $d$
which is too large for us.

P.~Freitas and M.~Gama in \cite{FrG} showed that $\Psi (d,k,\O)$ 
grows as a power of $k$ as $k\to\infty$.
We are interested in another regime, 
namely what happens if $d \to \infty$.

Let us formulate our main result.

\begin{theorem}
\label{t14}
There is an absolute constant $C_{conv} > 0$ such that
\begin{equation*} 
\Psi (d,k,\O) \ge C_{conv} \left(\frac{e}2\right)^d 
\end{equation*}
for all natural $k$ and for all bounded convex domains $\O$.
\end{theorem}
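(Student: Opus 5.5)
The plan is to follow the scheme behind Friedlander's and Levine--Weinberger's proofs, and to enlarge the space of ``extra'' test functions from dimension $1$ (resp. $d$) to a space of dimension $\ge C(e/2)^d$.

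\emph{Reduction.} Fix $k$ and let $\la=\la_k(\O)$, with Dirichlet eigenfunctions $\ph_1,\dots,\ph_k$. By the min-max principle it suffices to construct a subspace $\mathcal L=\mathcal L_0\oplus\mathcal W\subset W_2^1(\O)$ such that
\begin{equation*}
\int_\O|\n u|^2\le\la\int_\O|u|^2\qquad\forall\,u\in\mathcal L,
\end{equation*}
where $\mathcal L_0=\operatorname{span}\{\ph_1,\dots,\ph_k\}$ and $\dim\mathcal W\ge C_{conv}(e/2)^d$. One must also check $\mathcal W\cap\mathcal L_0=\{0\}$. This should follow because elements of $\mathcal W$ do not vanish on $\dd\O$, or from unique continuation.

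\emph{Choice of $\mathcal W$.} I would take $\mathcal W$ to be the same space of ``extra'' functions that produces the bound of Theorem \ref{t11}, rescaled to the frequency $\sqrt{\la}$. Concretely, take functions of the form $w(x)=\int_{\R^d} g(\xi)\,\partial_a\!\left(e^{i\xi\cdot x}\right)d\xi$ with $g$ supported near the sphere $|\xi|^2=\la$, or their real counterparts built from directional derivatives $\partial_a\ph$ of eigenfunctions, as in Levine--Weinberger. The dimension count $(j_{d/2-1})^d/(2^d\G(d/2+1)^2)\sim C(e/2)^d$ in Theorem \ref{t11} comes from a volume/packing comparison between a ball of radius $j_{d/2-1}$ and a ball of radius $\sim d/2$. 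I expect the same count here once $\la$ is normalised.

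\emph{Key computation.} For $u=\ph+w$ with $\ph\in\mathcal L_0$, I expand the quadratic form
\begin{equation*}
Q(u)=\int_\O|\n u|^2-\la\int_\O|u|^2 .
\end{equation*}
The pure $\ph$ part is $\le 0$, since $-\D\ph_j=\la_j\ph_j$ and $\la_j\le\la$.

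The cross terms are handled by integrating by parts. Because $\ph|_{\dd\O}=0$, they reduce to $\int_\O\ph\,(-\D-\la)\bar w$, which vanishes (or has a favourable sign) when $w$ is built from exponentials on the sphere $|\xi|^2=\la$.

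The pure $w$ part produces, via Rellich's identity, boundary terms of the form $-\int_{\dd\O}\mathcal H\,|\cdot|^2$, where $\mathcal H$ is the mean curvature of $\dd\O$. Convexity gives $\mathcal H\ge0$, so these terms are $\le 0$.

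\emph{Main obstacle.} The main difficulty is the off-diagonal interaction inside $\mathcal W$. For two different frequencies $\xi\ne\eta$, the term $\int_\O(\xi\cdot\eta-\la)e^{i(\xi-\eta)\cdot x}$ does not vanish and has no sign. My first attempt is to choose the frequencies so that this interaction matrix is negative semidefinite, exploiting the fact that it is a restriction to $\O$ of a positive-definite kernel (a Fourier transform of a surface measure). If that fails, I would use convexity once more to compare $\O$ with a translate-invariant situation: average over translations $x\mapsto x+h$, which preserve the Dirichlet problem's test class up to the domain, and use Brunn--Minkowski. This would control the error terms by the same Bessel-type quantity as in Theorem \ref{t11}, producing the constant $C_{conv}(e/2)^d$.
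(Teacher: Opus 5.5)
There is a genuine gap: your proposal never supplies a mechanism that makes the form nonnegative on an exponentially large space, and the mechanisms you name do not work.

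\begin{itemize}
\item \emph{The sign-based approach fails.} If $\mathcal W$ is spanned by plane waves $e^{i\xi_j\cdot x}$ with $|\xi_j|^2=\la$, each one is exactly critical: $\int_\O|\n e^{i\xi_j\cdot x}|^2=\la\int_\O|e^{i\xi_j\cdot x}|^2$. So the matrix of $Q$ on $\mathcal W$ has zero diagonal, and its off-diagonal entries are $-\frac12|\xi_j-\xi_k|^2\int_\O e^{i(\xi_j-\xi_k)\cdot x}\,dx$. A Hermitian matrix with zero trace is negative semidefinite only if it vanishes. Your first attempt would therefore need $\int_\O e^{i(\xi_j-\xi_k)\cdot x}\,dx=0$ for all $j\ne k$, with all $\xi_j$ on one sphere. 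That cannot be arranged for a general convex $\O$ with exponentially many frequencies.
\item \emph{Rellich does not help here.} The mean-curvature identity is the Levine--Weinberger mechanism for $\partial_a\ph$ with $\ph|_{\dd\O}=0$. It yields only $d$ extra functions and gives no signed term for combinations of plane waves.
\item \emph{The fallback is not an argument.} Averaging over translations plus Brunn--Minkowski is not worked out into any estimate.
\item \emph{Theorem \ref{t11} has nothing to rescale.} It contains no ``space of extra functions.'' Its count comes from Kr\"oger's bound for $N_{\cal N}$ plus Faber--Krahn, which controls $N_{\cal N}(\la_1)$. It says nothing about $N_{\cal N}(\la)-N_{\cal D}(\la)$ at $\la=\la_k$ for large $k$.
\item \emph{Transversality is not automatic.} $\mathcal W\cap\mathcal L_0=\{0\}$ can fail: on a box, $\prod\sin x_i$ is a combination of plane waves on the sphere. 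The paper obtains it by a genericity/perturbation argument.
\end{itemize}

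The missing idea is a strictly positive diagonal gap, after which the off-diagonal part can be controlled by size rather than by sign.

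\begin{itemize}
\item The paper uses convexity only to place $\O$ in a slab $|x_1|\le w/2$ with $w=(d!\mes_d\O)^{1/d}$, via the minimal-width bound $\mes_d\O\ge w_{\min}^d/d!$.
\item It replaces plane waves by $u_\eta=\cos(\de x_1)e^{i\<y,\eta\>}$ with $\de=1/w$ and $|\eta|^2+\de^2=\la$. These are still $\la$-eigenfunctions, so your cross-term cancellation against $\mathcal L_0$ goes through unchanged.
\item Now the diagonal is positive: $\la\|u_\eta\|^2-\|\n u_\eta\|^2=\de^2\int_\O\cos(2\de x_1)\,dx\ge \mes_d\O/(2w^2)=:a$.
\item The off-diagonal block $B_1$ is bounded in Hilbert--Schmidt norm on average over random $\eta_1,\dots,\eta_N$ on the sphere. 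This uses Plancherel in $y$, which gives $\int|\cdot|^2\,d\te\le(2\pi)^{d-1}w\mes_d\O$, together with a sharp bound for $\int G(\te)\,d\te/|\te|$.
\item A matrix lemma then gives the dimension: if $B_0\ge a$ and $\|B_1\|_{S_2}^2\le b(N^2-N)$, then $N_+(B)\ge a^2/(4b)+\frac12$ for suitable $N$.
\item The factor $(e/2)^d$ comes out of $a^2/(4b)$ after inserting $\la\ge\la_1\ge$ the Faber--Krahn bound and applying Stirling. It does not come from transplanting Theorem \ref{t11}.
\end{itemize}
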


Theorem \ref{t14} together with Corollary \ref{c12} invite us to make the following

\begin{conj}
\label{conj15}
There exist two absolute positive constants $C_*$, $\al_*$ such that
$$
\Psi (d,k,\O) \ge C_* e^{\al_* d} \qquad \forall \ k \in \N, \quad \forall \ \O \subset \R^d.
$$
\end{conj}

It is natural to consider the examples of multi-dimensional cubes and balls.
It is shown in \cite{CMS} that a) in the case of a ball the function $\Phi (d,1,B)$ 
grows faster than any power of $d$ (Theorem 3 in \cite{CMS});
and b) in the case of a cube it grows exponentially,
$\al_1 d \le \log \Phi (d,1,Q) \le \al_2 d$ 
with some positive constants $\al_1, \al_2$ (formula (10) in \cite{CMS}).
We refine these results by explicitly computing 
the asymptotics of $\log \Phi (d,1)$ in these cases.

\begin{theorem}
\label{t16}
Let $B$ be a $d$-dimensional ball.
Then
$$
\log \Phi (d, 1, B) = \al_{ball} d + O (d^{1/3}), \qquad d \to + \infty.
$$
Here
$$
\al_{ball} = \sqrt 2 \log (\sqrt 2 + 1) - \log 2 \approx 0,55.
$$
\end{theorem}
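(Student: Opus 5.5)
We count Neumann eigenvalues of the unit ball $B\subset\R^d$ below $\la_1(B)=j_\nu^2$, $\nu=d/2-1$, by separation of variables. The Neumann eigenfunctions are $f(r)Y_l(\oo)$, with $Y_l$ a spherical harmonic of degree $l$ and $f(r)=r^{-\nu}J_{\nu+l}(\sqrt{\mu}\,r)$. The boundary condition $f'(1)=0$ becomes
$$
p\,J'_{\nu+l}(p)=\nu J_{\nu+l}(p),\qquad p=\sqrt{\mu}.
$$
Let $p_{l,1}<p_{l,2}<\dots$ be its positive roots, and add the root $\mu=0$ for $l=0$. Each root has multiplicity
$$
N(d,l)=\binom{l+d-1}{d-1}-\binom{l+d-3}{d-1}.
$$
First, by interlacing, $p_{l,2}$ exceeds the first zero $j_{\nu+l}$ of $J_{\nu+l}$, and $j_{\nu+l}\ge j_\nu$. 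For $l=0$ the nonzero roots are zeros of $J_{\nu+1}$, so they are $\ge j_{\nu+1}>j_\nu$. Hence each $l$ contributes at most one eigenvalue below $\la_1$. Second, $p_{l,1}$ is increasing in $l$. Together these give the exact identity
$$
\Phi(d,1,B)=\sum_{l=0}^{L(d)} N(d,l),\qquad L(d)=\max\{l: p_{l,1}\le j_\nu\}.
$$
Since $N(d,l)$ is increasing in $l$ and the number of terms is at most polynomial in $d$, $\log\Phi$ equals $\log N(d,L(d))$ up to $O(\log d)$.

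Next I locate $L(d)$. For $r<\mu:=\nu+l$ the Debye approximation gives $J'_\mu(r)/J_\mu(r)\approx\sqrt{\mu^2-r^2}/r$. The root condition then reads $\sqrt{\mu^2-p^2}\approx\nu$, i.e.
$$
p_{l,1}^2\approx \mu^2-\nu^2=l(l+d-2).
$$
On the other side, $j_\nu=\nu+a\nu^{1/3}+O(\nu^{-1/3})$ with $a\approx1.856$. Solving $l(l+d-2)\le \nu^2\,(1+O(d^{-2/3}))$ with $l=td$ gives $t(t+1)\le 1/4$. So $L(d)=t_*d+O(d^{1/3})$ with $t_*=(\sqrt2-1)/2$. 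The $O(d^{1/3})$ comes from the $\nu^{1/3}$ correction in $j_\nu$ and from the error in the Debye ratio near the turning region.

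Finally, Stirling gives $\log N(d,td)=d\,[(1+t)\log(1+t)-t\log t]+O(\log d)$. The bracket has bounded derivative near $t_*$, so an $O(d^{1/3})$ shift in $L$ changes $\log N$ by $O(d^{1/3})$. Substituting $1+t_*=(\sqrt2+1)/2$ and $t_*=(\sqrt2-1)/2$, and using $(\sqrt2+1)(\sqrt2-1)=1$, the bracket equals $\sqrt2\log(\sqrt2+1)-\log2=\al_{ball}$.

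The main obstacle is to make the location of $p_{l,1}$ rigorous: I need two-sided bounds $p_{l,1}^2=l(l+d-2)\,(1+O(d^{-2/3}))$ uniformly for $l$ near $t_*d$, together with the stated asymptotics of $j_\nu$. Away from the turning point, i.e. for $\mu-r\gg\mu^{1/3}$, this follows from known sharp inequalities for the logarithmic derivative $J'_\mu/J_\mu$, for instance the Riccati-equation comparison bounds of the type $\sqrt{\mu^2-r^2}/r\pm O(1/(\mu^2-r^2))$. In our regime $r\approx\sqrt{\mu^2-\nu^2}$ is at distance of order $d$ from $\mu$, so we are well inside the oscillation-free zone and these bounds should suffice. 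The only genuine turning-point input is then $j_\nu$ itself, which I would take from Olver's uniform (Airy-type) asymptotics. If the ratio bounds prove too crude, I would instead apply Olver's uniform expansion directly to $J_\mu$ and $J'_\mu$.
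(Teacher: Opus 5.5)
Your architecture is the paper's, and it is correct: separation of variables, Dixon's interlacing to show that each degree $l$ contributes at most the single root $p_{l,1}$ below $\lambda_1=j_\nu^2$ (only $\mu=0$ for $l=0$), the exact count $\Phi(d,1,B)=\sum_{l\le L}N(d,l)$, the reduction to locating $L(d)$ within $O(d^{1/3})$, and the Stirling evaluation giving $\alpha_{ball}$.

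The gap is that the only genuinely analytic step is not proved. That step is the two-sided bound $p_{l,1}^2=l(l+d-2)\bigl(1+O(d^{-2/3})\bigr)$ for $l$ near $t_*d$. You defer it to unspecified ``known sharp inequalities'' or to Olver's expansion. Moreover, the bound you quote cannot be right. With $\mu=\nu+l$ as in your notation, Debye's expansion gives
\[
\frac{J_\mu'(r)}{J_\mu(r)}=\frac{\sqrt{\mu^2-r^2}}{r}+\frac{r}{2(\mu^2-r^2)}+\dots
\]
In your regime ($r\approx d/2$, $\mu\approx d/\sqrt2$) the correction is therefore of order $d^{-1}$, not $O\bigl((\mu^2-r^2)^{-1}\bigr)=O(d^{-2})$. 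The corrected statement would still suffice, since it gives $rJ_\mu'/J_\mu=\sqrt{\mu^2-r^2}+O(1)$ and hence $p_{l,1}^2=l(l+d-2)+O(d)$. But it has to be proved. You also assert without justification that $p_{l,1}$ increases in $l$; this is dispensable once two-sided bounds are known.

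The paper closes this step using no Bessel asymptotics beyond those for $j_\nu$. For $l\ge1$, $p_{l,1}^2$ is the minimum of the radial Rayleigh quotient
\[
\frac{\int_0^1\bigl(|f'(x)|^2x^{d-1}+l(l+d-2)|f(x)|^2x^{d-3}\bigr)\,dx}{\int_0^1|f(x)|^2x^{d-1}\,dx}.
\]
Dropping $|f'|^2$ and using $x^{d-3}\ge x^{d-1}$ on $(0,1)$ gives $p_{l,1}^2\ge l(l+d-2)$. The test function $f\equiv1$ gives $p_{l,1}^2\le\frac{d}{d-2}\,l(l+d-2)$. This is a relative error $O(1/d)$, better than you need. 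Monotonicity in $l$ comes for free, because the numerator increases with $l$.

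If you want to keep your route, you must actually carry out the Riccati comparison. The function $y=rJ_\mu'/J_\mu$ solves $ry'=\mu^2-r^2-y^2$ with $y(0)=\mu$. One checks that $\sqrt{\mu^2-r^2}$ is a subsolution and $\sqrt{\mu^2-r^2}+\frac{r^2}{2(\mu^2-r^2)}$ is a supersolution on $(0,\mu)$. Evaluated at the root $y(p_{l,1})=\nu$, this yields $l(l+d-2)\le p_{l,1}^2\le l(l+d-2)+O(d)$ in the relevant range of $l$.
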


\begin{rem}
Moreover, one can show that
$$
\log \Phi (d, 1, B) = \al_{ball} d + \hat \al d^{1/3} + O (\log d),\qquad d \to + \infty,
$$
see \eqref{410} below.
\end{rem}

Note that this Theorem shows that if the Conjecture \ref{conj15} holds true
then $\al_* \le \al_{ball}$.

\begin{theorem}
\label{t17}
Let $Q$ be a $d$-dimensional cube.
Then the asymptotics
$$
\log \Phi (d, 1, Q) = \al_{cube} d + O (d^{1/3}), \qquad d \to + \infty
$$
holds true, $\al_{cube} \approx 1,04$.
\end{theorem}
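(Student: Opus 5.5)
For the cube $Q=(0,\pi)^d$ both spectra are explicit. The Dirichlet eigenvalues are $|n|^2$ with $n\in\N^d$, so $\la_1=d$. The Neumann eigenvalues are $|m|^2$ with $m\in\mathbb{Z}_{\ge0}^d$. Hence
$$
\Phi(d,1,Q)=\#\{m\in\mathbb{Z}_{\ge 0}^d:\ m_1^2+\dots+m_d^2\le d\},
$$
and the theorem reduces to a lattice-point count. I would compute its exponential growth rate by a saddle-point (Legendre transform / large deviations) argument.

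\textbf{Generating function and saddle point.} Set $\theta(t)=\sum_{m\ge0}e^{-tm^2}$. Then
$$
\sum_{m\in\mathbb{Z}_{\ge0}^d}e^{-t|m|^2}=\theta(t)^d,
$$
and Chernoff's bound gives, for every $t>0$,
$$
\Phi\le e^{td}\theta(t)^d .
$$
Minimizing over $t$ yields $\log\Phi\le d\,\al_{cube}$ with
$$
\al_{cube}=\min_{t>0}\bigl(t+\log\theta(t)\bigr),
$$
which is the exact upper bound with no error term. The minimizer $t_*$ is determined by $-\theta'(t_*)/\theta(t_*)=1$, meaning the tilted distribution $p(m)\propto e^{-t_*m^2}$ on $\mathbb{Z}_{\ge0}$ has $\mathbb{E}m^2=1$. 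Numerically this should give $\al_{cube}\approx1.04$. This is consistent with the bounds of \cite{CMS}, since $\al_{cube}$ is the entropy of the tilted law.

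\textbf{Lower bound.} Let $X_1,\dots,X_d$ be i.i.d.\ with law $p$, so that $\mathbb{E}X_i^2=1$, and put $S=\sum X_i^2$. Then
$$
\Phi\ \ge\ \sum_{d-w\le|m|^2\le d}1\ \ge\ e^{d\,\al_{cube}}\,e^{-t_*w}\,P(d-w\le S\le d).
$$
I would take the window $w\sim\sqrt d$. By the central limit theorem $P(d-\sqrt d\le S\le d)$ is bounded below by a positive constant, since $S$ has mean $d$ and variance of order $d$. This gives $\log\Phi\ge\al_{cube}d-O(\sqrt d)$.

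\textbf{Main obstacle.} The step I expect to be hardest is sharpening the lower-bound error from $O(\sqrt d)$ to $O(d^{1/3})$. The deficiency arises only because the window costs $e^{-t_*w}$. A local limit theorem would handle it: $S$ takes integer values and the lattice generated by the squares is all of $\mathbb{Z}$. The theorem gives $P(S=d-j)\asymp d^{-1/2}$ uniformly for $j=O(\sqrt d)$, and taking $w=O(1)$ then yields $\log\Phi=\al_{cube}d+O(\log d)$, which is better than claimed. I would use the CLT version if the local theorem needs too much justification. If the paper's $d^{1/3}$ comes from a cruder uniform estimate, presumably one shared with the ball case in Theorem \ref{t16} where Bessel zero asymptotics $j_\nu=\nu+c\nu^{1/3}+\dots$ enter, then I would instead optimize a window $w=d^{1/3}$ against a Chebyshev or Berry–Esseen estimate. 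Finally I would check numerically that $t_*+\log\theta(t_*)\approx1.04$.
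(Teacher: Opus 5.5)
The paper gives no proof of this theorem; it says the proof appears in a separate paper. Your argument therefore has to stand on its own, and most of it does.

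\textbf{What is correct.} The reduction to $\#\{m\in\mathbb{Z}_{\ge0}^d : |m|^2\le d\}$ is right. The Chernoff bound gives $\log\Phi(d,1,Q)\le \al_{cube}d$ with no error term. Your constant $\al_{cube}=\min_{t>0}(t+\log\theta(t))$ is also correct. Solving $3q^4+8q^9+15q^{16}+\dots=1$ for $q=e^{-t_*}$ gives $q\approx 0.689$, so $t_*\approx 0.372$ and $\log\theta(t_*)\approx 0.669$. Hence $\al_{cube}\approx 1.041$, which matches the stated value.

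\textbf{The gap: the lower bound you commit to is too weak.} With the plain CLT and a window $w\sim\sqrt d$ you get $\log\Phi\ge \al_{cube}d-t_*\sqrt d-O(1)$. That is an error of $O(\sqrt d)$, which does not prove the stated $O(d^{1/3})$. So your fallback ``use the CLT version if the local theorem needs too much justification'' does not prove the theorem. Chebyshev does not rescue it either. It gives no lower bound on the probability of a one-sided window of length $o(\sqrt d)$, and re-centering the tilt again only yields $O(\sqrt d)$.

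\textbf{The fix is the one you name, and it needs no extra justification.} The $X_i^2$ are i.i.d.\ and integer-valued, with all moments finite and support containing $0$ and $1$, so the maximal span is $1$. Gnedenko's local limit theorem therefore applies: with $\sigma^2=\operatorname{Var}(X_1^2)$, we have $P(S=k)=(\sigma\sqrt{2\pi d})^{-1}e^{-(k-d)^2/(2\sigma^2 d)}+o(d^{-1/2})$ uniformly in $k$. Write
$$
\Phi(d,1,Q)=e^{\al_{cube}d}\sum_{j\ge0}P(S=d-j)\,e^{-t_*j}.
$$
Bounding this sum below by its $j=0$ term, and above by $\sup_k P(S=k)/(1-e^{-t_*})$, gives $\log\Phi(d,1,Q)=\al_{cube}d-\tfrac12\log d+O(1)$. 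This is stronger than the theorem.

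Alternatively, use Berry--Esseen, since $X_1^2-1$ has a finite third moment. Take a window $w$ anywhere between a sufficiently large constant and $\sqrt d$, for instance $w=d^{1/3}$. Then $P(d-w\le S\le d)\gtrsim w\,d^{-1/2}$, and the window costs only $e^{-t_*w}$, so you get the claimed $O(d^{1/3})$ or better.

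There is also no reason to tune the window to match a ``Bessel-type'' $d^{1/3}$. For the cube $\la_1=d$ exactly, so no Airy term can arise. The $d^{1/3}$ in the statement is simply not sharp for the cube.
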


We prove this Theorem in a separate paper.

For further references introduce the counting functions 
for the spectra of the Dirichlet and Neumann problems,
$$
N_{\cal D} (\O,\L) := \# \{ j: \la_j \le \L\}, \qquad
N_{\cal N} (\O,\L) := \# \{ j: \mu_j \le \L\} .
$$
In these terms
\begin{equation}
\label{14}
\Phi (d, k, \O) = N_{\cal N} (\O, \la_k).
\end{equation}
If $\la_k$ is a simple eigenvalue of the Dirichlet problem then
$$
\Psi (d, k, \O) = N_{\cal N} (\O, \la_k) - N_{\cal D} (\O, \la_k).
$$
In general case,
\begin{equation}
\label{15}
\Psi (d, k, \O) \ge N_{\cal N} (\O, \la_k) - N_{\cal D} (\O, \la_k).
\end{equation}

In the next section we prove Theorem \ref{t11} and Corollaries \ref{c12} and \ref{c13}.
In \S 3 we prove Theorem \ref{t14}.
In \S 4 we prove Theorem \ref{t16}.

{\bf Acknowledgements.} The results were partially obtained during the programme
{\it Geometric spectral theory and applications}, supported by EPSRC grant EP/Z000580/1.
The author thanks the Isaac Newton Institute for Mathematical Sciences, Cambridge, 
for support and hospitality.
The author also thanks Sergei V. Ivanov for consultation on the Kakeya-P\'al problem,
see \S \ref{smw}.

%%%%%%%%%%%%%%%%%%%%%%%%%%%%%%%%%%%%%%%%%%
\section{Bounds for $\Phi (d, 1, \O)$ and $\Phi (d, 2, \O)$ for arbitrary domains}

We use three following famous inequalities.

\begin{theorem}[Kr\" oger, \cite{Kr}]
\label{t21}
Let $\O \subset \R^d$ be a bounded domain such that
the embedding $W_2^1 (\O) \subset L_2 (\O)$ is compact.
Then the Neumann counting function satisfies the bound
$$
N_{\cal N} (\O,\la) \ge \frac2{d+2} \cdot \frac{\oo_d \mes_d \O}{(2\pi)^d}\ \la^{d/2}.
$$
\end{theorem}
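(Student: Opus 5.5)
We prove the bound by testing the Neumann quadratic form with plane waves and averaging over a ball in frequency space, in the style of Berezin--Li--Yau. Fix $\la>0$ and put $N = N_{\cal N}(\O,\la)$. Let $\{\psi_j\}$ be an orthonormal basis of $L_2(\O)$ of Neumann eigenfunctions, $-\D\psi_j=\mu_j\psi_j$. By the compactness assumption the spectrum is discrete, so this basis exists. Then $\mu_j\le\la$ exactly for $j\le N$.

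\textbf{Step 1: the form inequality for one plane wave.} For $\xi\in\R^d$ let $e_\xi(x)=e^{i\xi x}$. Since $\O$ is bounded, $e_\xi\in W_2^1(\O)$, which is the form domain of the Neumann Laplacian. Put $c_j(\xi)=(e_\xi,\psi_j)_{L_2(\O)}$. The spectral theorem, applied to the closed quadratic form, gives
$$
|\xi|^2\mes_d\O=\int_\O|\n e_\xi|^2\,dx=\sum_j\mu_j|c_j(\xi)|^2 .
$$
Drop the terms with $j\le N$ (they are nonnegative), and use $\mu_j>\la$ for $j>N$ together with Parseval, $\sum_j|c_j(\xi)|^2=\mes_d\O$. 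This yields
$$
|\xi|^2\mes_d\O\ \ge\ \la\Bigl(\mes_d\O-\sum_{j\le N}|c_j(\xi)|^2\Bigr).
$$

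\textbf{Step 2: averaging over frequencies.} Rewrite the inequality as $\la\sum_{j\le N}|c_j(\xi)|^2\ge(\la-|\xi|^2)\mes_d\O$ and integrate over the ball $|\xi|\le R$. Each $c_j(\xi)$ is $(2\pi)^{d/2}$ times the Fourier transform of $\psi_j$ extended by zero outside $\O$. By Plancherel,
$$
\int_{|\xi|\le R}|c_j(\xi)|^2\,d\xi\le\int_{\R^d}|c_j(\xi)|^2\,d\xi=(2\pi)^d\|\psi_j\|^2=(2\pi)^d .
$$
Hence
$$
\la N(2\pi)^d\ \ge\ \mes_d\O\int_{|\xi|\le R}(\la-|\xi|^2)\,d\xi=\mes_d\O\,\oo_d\Bigl(\la R^d-\frac{d}{d+2}R^{d+2}\Bigr),
$$
where $\oo_d$ is the volume of the unit ball.

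\textbf{Step 3: optimizing the radius.} The choice $R^2=\la$ maximizes the right-hand side and gives $\frac{2}{d+2}\,\oo_d\mes_d\O\,\la^{d/2+1}$. Dividing by $\la(2\pi)^d$ gives exactly the inequality of Theorem~\ref{t21}.

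\textbf{Main obstacle.} The only delicate point is justifying the identity in Step~1 for an arbitrary bounded domain with no regularity of $\dd\O$. This is handled entirely at the level of quadratic forms. The Neumann Laplacian is the self-adjoint operator associated with the closed form $\int_\O|\n u|^2$ on $W_2^1(\O)$. For any $u$ in the form domain, the spectral theorem gives $\int_\O|\n u|^2=\sum_j\mu_j|(u,\psi_j)|^2$. No boundary traces or integration by parts on $\dd\O$ are needed.
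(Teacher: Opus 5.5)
The paper gives no proof of Theorem~\ref{t21}; it quotes the result from Kr\"oger's paper. Your argument is correct and is essentially Kr\"oger's own proof, phrased directly for the counting function rather than as the equivalent bound $\mu_{k+1}\le 4\pi^2\bigl(\frac{(d+2)k}{2\oo_d\mes_d\O}\bigr)^{2/d}$. Its ingredients are plane waves tested in the Neumann form on $W_2^1(\O)$ (no boundary regularity needed), Parseval and Plancherel for the coefficients on the first $N$ eigenfunctions, and averaging over the frequency ball $|\xi|\le\sqrt{\la}$.
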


Here and in what follows we denote by $\oo_d$ 
the volume of the unit ball in $\R^d$,
$$
\oo_d = \frac{\pi^{d/2}}{\G \left(\frac{d}2+1\right)}.
$$

\begin{theorem}[Faber-Krahn, \cite{Faber, Krahn}]
\label{t22}
Let $\O \subset \R^d$ be a bounded domain.
Then 
$$
\la_1 (\O) \ge \la_1 (B_R),
$$
where $B_R$ is a ball of the same volume, $\mes_d B_R = \mes_d \O$.
\end{theorem}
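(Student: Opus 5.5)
We prove the inequality by Schwarz symmetrization: we use the Rayleigh quotient characterization of $\la_1$ and show that symmetrization keeps the $L_2$-norm and does not increase the Dirichlet integral.

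First, let $u$ be a first Dirichlet eigenfunction of $\O$, so $u \in \overset{\circ}{W}{}_2^1(\O)$. Replacing $u$ by $|u|$ does not change the Rayleigh quotient, since $|\n |u|| = |\n u|$ a.e. So we may assume $u \ge 0$ and
$$
\la_1(\O) = \frac{\int_\O |\n u|^2}{\int_\O u^2}.
$$
Extend $u$ by zero to a function in $W_2^1(\R^d)$. Define its Schwarz symmetrization $u^*$ as the radially symmetric, non-increasing function whose superlevel sets $\{u^* > t\}$ are centred balls with $\mes_d\{u^*>t\} = \mes_d\{u>t\}$ for every $t>0$. Two properties follow directly from equimeasurability and the layer-cake formula: $\|u^*\|_{L_2} = \|u\|_{L_2}$, and $\operatorname{supp} u^* \subset \overline{B_R}$ with $\mes_d B_R = \mes_d \O$.

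Second, the core step is the P\'olya--Szeg\H{o} inequality
$$
\int |\n u^*|^2 \le \int |\n u|^2.
$$
I would prove it first for nonnegative compactly supported functions that are smooth, or piecewise polynomial, and whose level sets are regular for a.e.\ $t$ (Sard). Put $m(t) = \mes_d\{u>t\}$ and $P(t) = \mes_{d-1}\{u=t\}$. The coarea formula gives
$$
-m'(t) \ge \int_{\{u=t\}} \frac{d\sigma}{|\n u|}
\qquad\text{and}\qquad
\int |\n u|^2 = \int_0^\infty \int_{\{u=t\}} |\n u|\, d\sigma\, dt .
$$
The Cauchy--Schwarz inequality gives
$$
\int_{\{u=t\}}|\n u|\,d\sigma \ge \frac{P(t)^2}{\int_{\{u=t\}} |\n u|^{-1}d\sigma}\ge \frac{P(t)^2}{-m'(t)} .
$$
The isoperimetric inequality gives $P(t) \ge P^*(t)$, the perimeter of the ball of volume $m(t)$. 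For $u^*$ all these inequalities are equalities, because $|\n u^*|$ is constant on each sphere. Integrating in $t$ therefore yields the inequality.

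Third, we pass to general $u \in W_2^1$ by approximation. Symmetrization is a contraction in $L_2$, so $u_n \to u$ implies $u_n^* \to u^*$ in $L_2$. The Dirichlet integral is weakly lower semicontinuous, hence
$$
\int|\n u^*|^2 \le \liminf \int |\n u_n^*|^2 \le \lim \int|\n u_n|^2 = \int |\n u|^2 .
$$
In particular $u^* \in \overset{\circ}{W}{}_2^1(B_R)$, and by the min-max principle $\la_1(B_R) \le \int|\n u^*|^2/\int (u^*)^2 \le \la_1(\O)$.

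I expect the main obstacle to be the second step, namely the regularity issues: critical points of $u$, a possible singular part of the distribution function $m$, and justifying the coarea manipulations. I would handle these by working with generic smooth approximants, whose critical values have measure zero, and then rely on the density and lower-semicontinuity argument of the third step rather than treating general Sobolev functions directly.
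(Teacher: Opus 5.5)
The paper does not prove Theorem~\ref{t22}. It quotes the Faber--Krahn inequality from the literature and uses it only as a black box, through the bound $\lambda_1(\Omega)\ge j_{d/2-1}^2\omega_d^{2/d}(\mes_d\Omega)^{-2/d}$. So there is no competing argument in the paper; yours is the standard Schwarz-symmetrization proof. Your first and third steps are fine:
the $L_2$-contraction follows from the Hardy--Littlewood inequality $\int u^*v^*\ge\int uv$; the bound on $\|\nabla u_n^*\|$ together with $L_2$-convergence gives $u_n^*\rightharpoonup u^*$ in $W_2^1(\mathbb{R}^d)$; and a $W_2^1(\mathbb{R}^d)$ function vanishing outside $\overline{B_R}$ is admissible for the Dirichlet problem in $B_R$.

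The gap is the sentence ``for $u^*$ all these inequalities are equalities, because $|\nabla u^*|$ is constant on each sphere''. Constancy on spheres gives equality only in the Cauchy--Schwarz and isoperimetric steps. For $u^*$ you also need two things:
\begin{itemize}
\item $u^*$ must be weakly differentiable;
\item $-m'(t)=\int_{\{u^*=t\}}|\nabla u^*|^{-1}\,d\sigma$ must hold for a.e.\ $t$.
\end{itemize}
The coarea formula alone gives only $\ge$ in the second item. That yields $\int|\nabla u^*|^2\ge\int_0^\infty P^*(t)^2/(-m'(t))\,dt$, which is the wrong direction. This is a regularity property of $u^*$, not of $u$. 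If the profile $\phi$ in $u^*(x)=\phi(|x|)$ had a Cantor-type part, $\nabla u^*$ would not see it and the identity would fail. So Sard's theorem applied to $u$ does not settle it; equality in the coarea step for $u$ itself is not needed at all.

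The missing argument is short:
\begin{itemize}
\item For smooth $u\ge 0$, your own chain gives $-m'(t)\ge\int_{\{u=t\}}|\nabla u|^{-1}d\sigma>0$ for a.e.\ $t\in(0,\max u)$.
\item $m$ is strictly decreasing on this interval, because the open sets $\{t_1<u<t_2\}$ are nonempty.
\item Hence the radius $\rho(t)=(m(t)/\omega_d)^{1/d}$ of $\{u^*>t\}$ is strictly decreasing with $\rho'<0$ a.e.
\item So its inverse $\phi$ is continuous and maps null sets to null sets; by the Banach--Zarecki theorem it is absolutely continuous.
\item Then $\nabla u^*(x)=\phi'(|x|)\,x/|x|$, and the substitution $r=\rho(t)$ gives exactly $\int|\nabla u^*|^2=\int_0^{\max u}P^*(t)^2/(-m'(t))\,dt$.
\end{itemize}
Alternatively, show that $u\mapsto u^*$ does not increase Lipschitz constants, using $\{u>t+Lh\}+B_h\subset\{u>t\}$ and the Brunn--Minkowski inequality. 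Then use that a Lipschitz $\phi$ maps $\{\phi'=0\}$ to a null set. With either argument added, your proof is complete.
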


\begin{theorem}[Krahn-Szego, \cite{Kra}]
\label{t23}
Let $\O \subset \R^d$ be a bounded domain.
Then 
$$
\la_2 (\O) \ge \la_1 (B_{\tilde R}),
$$
where $B_{\tilde R}$ is a ball such that $\mes_d B_{\tilde R} = \frac12 \mes_d \O$.
\end{theorem}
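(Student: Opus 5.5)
The plan is to derive the statement from Faber--Krahn (Theorem \ref{t22}), applied not to $\O$ itself but to the two nodal sets of the second Dirichlet eigenfunction. Let $\ph_2$ be a real eigenfunction for $\la_2(\O)$. The first eigenfunction $\ph_1$ can be chosen strictly positive in each connected component, and $\ph_2 \perp \ph_1$ in $L_2(\O)$. Hence $\ph_2$ changes sign, or, if $\O$ is disconnected, $\ph_2$ can be chosen supported in a component different from the one carrying $\ph_1$. I will treat the connected case and then indicate the trivial modification. Put
$$
\O_+ = \{x \in \O: \ph_2(x) > 0\}, \qquad \O_- = \{x \in \O: \ph_2(x) < 0\},
$$
and $\ph_2^\pm = \max(\pm\ph_2, 0)$. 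Both functions lie in $W_2^1$, vanish outside $\O_\pm$, and are nonzero.

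\textbf{Rayleigh quotients of the nodal parts.} Multiply $-\D\ph_2 = \la_2\ph_2$ by $\ph_2^+$ and integrate by parts. This is legitimate in the weak form, since $\ph_2^+ \in \overset{\circ}{W}{}_2^1(\O)$ and $\n\ph_2^+ = \n\ph_2 \cdot 1_{\{\ph_2>0\}}$. It gives
$$
\int |\n \ph_2^+|^2 = \la_2 \int |\ph_2^+|^2,
$$
and the same identity holds for $\ph_2^-$. So each $\ph_2^\pm$ is an admissible trial function with Rayleigh quotient exactly $\la_2(\O)$.

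\textbf{Symmetrization instead of nodal-domain regularity.} The sets $\O_\pm$ may be irregular. To avoid needing Faber--Krahn for arbitrary open sets, I will apply the argument behind Theorem \ref{t22} directly to the functions: the Schwarz symmetrization $(\ph_2^\pm)^*$. By the P\'olya--Szeg\H{o} inequality it lies in $\overset{\circ}{W}{}_2^1(B^\pm)$, where $B^\pm$ is the ball with $\mes_d B^\pm = \mes_d \O_\pm$. It also preserves the $L_2$ norm and does not increase the Dirichlet integral. Therefore
$$
\la_1(B^\pm) \le \la_2(\O).
$$

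\textbf{Volume selection and monotonicity.} Since $\O_+ \cap \O_- = \emptyset$, we have $\mes \O_+ + \mes \O_- \le \mes \O$, so one of them, say $\O_+$, satisfies $\mes \O_+ \le \frac12 \mes \O$. Its ball then has radius at most $\tilde R$. Because $\la_1(B_r) = j_{d/2-1}^2/r^2$ is decreasing in $r$, we get $\la_1(B_{\tilde R}) \le \la_1(B^+) \le \la_2(\O)$, which is the claim.

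In the disconnected case, $\ph_1$ and $\ph_2$ either both change sign inside one component, and the same argument runs there, or they live on disjoint components $\O_1, \O_2$. In the latter situation $\la_1(\O_1), \la_1(\O_2) \le \la_2$, and one of the $\O_i$ has volume at most $\frac12\mes\O$; Theorem \ref{t22} applied to it finishes.

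The main obstacle is the justification of the nodal-part identity and of the symmetrization step for nonsmooth $\O_\pm$. I would handle it entirely in $W_2^1$ via truncation, as above, rather than through boundary regularity of the nodal domains.
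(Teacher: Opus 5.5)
The paper gives no proof of this statement: it is quoted from Krahn \cite{Kra} and used only as a black box in the proof of Theorem~\ref{t11}. Your argument is the classical Krahn proof, and it is correct. You split a real second eigenfunction into $\ph_2^\pm$, each with Rayleigh quotient exactly $\la_2(\O)$, which gives $\la_1(B^\pm)\le\la_2(\O)$ after symmetrization. Then one of the two positivity sets has measure at most $\frac12\mes_d\O$, and $j_{d/2-1}^2/r^2$ decreases in $r$. Working with the truncations $\ph_2^\pm$ and P\'olya--Szeg\H{o}, rather than applying Faber--Krahn to the possibly irregular nodal sets, is the right way to avoid regularity issues.

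Your disconnected-case paragraph has two small slips. A first eigenfunction need not be positive on every component. Also, $\ph_1$ never changes sign, so ``both change sign'' should read ``$\ph_2$ changes sign''.

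You can drop the case split altogether. Choose $\ph_1\ge 0$, which is allowed since $|\ph_1|$ is also a minimizer, and take $\ph_2\perp\ph_1$.
\begin{itemize}
\item If $\ph_2$ changes sign, use $\ph_2^\pm$ as before.
\item Otherwise $\ph_1\ph_2=0$ almost everywhere. Then $\ph_1$ and $|\ph_2|$ are nonnegative, have disjoint positivity sets, and have Rayleigh quotients at most $\la_2$.
\end{itemize}
In both cases your symmetrization and volume-selection steps apply verbatim.
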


{\it Proof of Theorem \ref{t11}.}
The first eigenfunction and the first eigenvalue 
of the Dirichlet problem in a ball of radius $R$ are well known:
\begin{equation}
\label{21}
\ph_1 (x) = |x|^{1-d/2} J_{\frac{d}2-1} \left(\frac{j_{\frac{d}2-1}|x|}R\right), 
\qquad
\la_1 (B_R) = \left(\frac{j_{\frac{d}2-1}}R\right)^2.
\end{equation}

The equalities $\mes_d B_R = \mes_d \O$ and $\mes_d B_{\tilde R} = \frac12 \mes_d \O$
imply
$$
R = \frac{(\mes_d \O)^{1/d}}{\oo_d^{1/d}}, \qquad
\tilde R = \frac{(\mes_d \O)^{1/d}}{(2 \oo_d)^{1/d}}.
$$
Therefore,
$$
\la_1 (\O) \ge \frac{j_{\frac{d}2-1}^2 \oo_d^{2/d}}{(\mes_d \O)^{2/d}},
\qquad 
\la_2 (\O) \ge \frac{j_{\frac{d}2-1}^2 2^{2/d} \oo_d^{2/d}}{(\mes_d \O)^{2/d}}.
$$
Now, Theorem \ref{t21} yields the bounds
\begin{eqnarray*}
\Phi (d, 1, \O) = N_{\cal N} (\O, \la_1) 
\ge \frac2{d+2} \cdot \frac{\oo_d \mes_d \O}{(2\pi)^d}\ \la_1 (\O)^{d/2} \\
\ge \frac{2 \oo_d^2 \left(j_{\frac{d}2-1}\right)^d}{(d+2) (2\pi)^d}
= \frac{\left(j_{\frac{d}2-1}\right)^d}{(d+2) 2^{d-1} \G(d/2+1)^2},
\end{eqnarray*}
and in the same manner,
\begin{eqnarray*}
\Phi (d, 2, \O) 
\ge \frac2{d+2} \cdot \frac{\oo_d \mes_d \O}{(2\pi)^d}\ \la_2 (\O)^{d/2}
\ge \frac{(j_{\frac{d}2-1})^d}{(d+2) 2^{d-2} \G(d/2+1)^2}.
\quad \qed
\end{eqnarray*}

{\it Proof of Corollary \ref{c12}.}
Taking a logarithm of inequality \eqref{12} we obtain 
\begin{equation}
\label{22}
\log \Phi (d,1,\O) \ge d \log j_{\frac{d}2-1} - d \log 2 - 2 \log \G\left(\frac{d}2 + 1\right)
+ O (\log d), \quad d \to \infty.
\end{equation}
The asymptotics of the zeros of the Bessel functions 
$$
j_\nu = \nu + |a_1| 2^{-1/3} \nu^{1/3} + O \left(\nu^{-1/3}\right), 
\qquad \nu \to + \infty,
$$
is well known, see for example \cite{QuWong}.
Here $a_1$ is the first zero of the Airy function $Ai$, $a_1 \approx - 2,34$.
Therefore,
\begin{equation}
\label{23}
j_{\frac{d}2-1} = \frac{d}2 + |a_1| 2^{-2/3} d^{1/3} + O(1), \quad d \to \infty,
\end{equation}
\begin{equation}
\label{24}
\log j_{\frac{d}2-1} = \log d - \log 2 + |a_1| 2^{1/3} d^{-2/3} + O(d^{-1}),
\quad d \to \infty.
\end{equation}
We use also Stirling's formula 
\begin{equation}
\label{25}
\log \G\left(\frac{d}2 + 1\right)
= \frac{d}2 \left(\log d - \log 2 - 1\right) + O(\log d), \quad d \to \infty.
\end{equation}
Substituting \eqref{24} and \eqref{25} into \eqref{22} we get
$$
\log \Phi (d,1,\O) \ge 
(1 - \log 2) d + |a_1| 2^{1/3} d^{1/3} + O(\log d)
\ge (1 - \log 2) d
$$
for sufficiently large $d$.
This yields the existence of such a constant $C_1 > 0$ that
$$
\Phi (d,1,\O) \ge C_1 \left(\frac{e}2\right)^d \qquad \qed
$$

For the proof of Corollary \ref{c13} we use

\begin{lemma}
\label{l21}
The function
$$
f_0 (x) = \log (1+x) - \frac{2x}3
$$
is positive on the interval $(0,1]$.
\end{lemma}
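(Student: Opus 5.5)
We need to show that $f_0(x) = \log(1+x) - \frac{2x}{3}$ is positive on $(0,1]$. The plan is to combine concavity of $f_0$ with its vanishing at the left endpoint and a numerical check at the right endpoint.

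First, $f_0(0) = 0$. Next, $f_0''(x) = -\frac{1}{(1+x)^2} < 0$, so $f_0$ is strictly concave on $[0,1]$. A strictly concave function on an interval lies strictly above the chord joining its endpoint values at every interior point. Hence for $x \in (0,1)$ we have
$$
f_0(x) > (1-x) f_0(0) + x f_0(1) = x f_0(1).
$$
It therefore suffices to show $f_0(1) > 0$, which also settles the endpoint $x=1$ itself.

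Now $f_0(1) = \log 2 - \frac{2}{3}$. To avoid relying on the decimal $\log 2 \approx 0.693$, I would verify $\log 2 > \frac{2}{3}$ via the equivalent inequality $e^{2/3} < 2$, i.e. $e^2 < 8$. This holds because $e < 2.8$ gives $e^2 < 7.84 < 8$. Thus $f_0(1) > 0$, and positivity on all of $(0,1]$ follows.

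An alternative route is to note that $f_0'(x) = \frac{1}{1+x} - \frac{2}{3}$ is positive on $[0, \tfrac12)$ and negative on $(\tfrac12, 1]$. So $f_0$ increases from $0$ and then decreases, and its minimum on $(0,1]$ is attained at $x=1$, where the same numerical check applies. The only non-formal step in either argument is confirming $\log 2 > \frac{2}{3}$, and the bound $e^2 < 8$ handles it.
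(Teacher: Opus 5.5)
Your main argument is the same as the paper's: concavity of $f_0$ (from $f_0''(x)=-(1+x)^{-2}$) combined with $f_0(0)=0$ and $f_0(1)=\log 2-\frac23>0$, and your bound $e^2<7.84<8$ just makes the numerical step explicit. One small slip in your alternative route: the minimum of $f_0$ over $(0,1]$ is not attained at $x=1$, since $f_0(x)\to 0$ as $x\to 0^+$ while $f_0(1)\approx 0.026$; your monotonicity analysis does, however, correctly give $f_0>0$ on $(0,\frac12]$ and $f_0\ge f_0(1)>0$ on $[\frac12,1]$, which is what you need.
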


\begin{proof}
We have $f_0''(x) = \frac{-1}{(1+x)^2}$, thus, 
the function $f_0$ is concave.
Next, 
$$
f_0 (0) = 0, \qquad f_0(1) = \log 2 - \frac23 > 0 .
$$
Therefore,
$f_0(x) > 0$ if $0<x \le 1$.
\end{proof}

Denote the right hand side of \eqref{12} by
$$
f_1 (d) := \frac{(j_{d/2-1})^d}{(d+2) 2^{d-1} \G(d/2+1)^2}.
$$
From \cite{QuWong} we know that
$$
j_\nu \ge \nu + |a_1| 2^{-1/3} \nu^{1/3} ,
$$
and therefore
$$
j_{\nu-1} \ge (\nu - 1) \left(1  + |a_1| 2^{-1/3} \nu^{-2/3}\right).
$$ 
It is known also that
$$
\G (x+1) = x \G(x) < x^{x+1/2} e^{-x} \sqrt{2\pi} \,e^{\frac1{12x}},
$$
so,
$$
\G\left(\frac{d}2+1\right) < 
\sqrt{2\pi} \left(\frac{d}2\right)^{\frac{d+1}2} e^{-\frac{d}2 + \frac1{6d}}.
$$
Combining all together we obtain
$$
f_1(d) \ge \frac{(d-2)^d \left(1 + |a_1|2^{1/3} d^{-2/3}\right)^d e^d}
{\pi (d+2) 2^{d-1} d^{d+1} e^{\frac1{3d}}} =: f_2 (d).
$$

\begin{lemma}
\label{l22}
The function $f_2(d)$ is increasing for $d \ge 8$.
\end{lemma}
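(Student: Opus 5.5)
We will show that $\log f_2$ has positive derivative on $[8,\infty)$, treating $d$ as a continuous variable. Write $c := |a_1| 2^{1/3}$ (so $c \approx 2.95$). Then
$$
\log f_2(d) = d\log(d-2) + d \log\left(1 + c\, d^{-2/3}\right) + d - \log \pi - \log(d+2) - (d-1)\log 2 - (d+1)\log d - \frac1{3d}.
$$
Grouping $d\log(d-2) - (d+1)\log d = d\log(1 - 2/d) - \log d$, we get
$$
g(d) := \frac{d}{dd}\log f_2(d) = \log\Bigl(1-\frac2d\Bigr) + \frac{2}{d-2} - \frac1d - \frac1{d+2} + 1 - \log 2 + \frac{1}{3d^2} + \frac{d}{dd}\Bigl[d\log\bigl(1 + c\,d^{-2/3}\bigr)\Bigr].
$$

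The key term is the last one. Put $x = c\, d^{-2/3}$. A direct computation gives
$$
\frac{d}{dd}\Bigl[d\log(1+x)\Bigr] = \log(1+x) - \frac{2}{3}\cdot\frac{x}{1+x}.
$$
For $x\in(0,1]$ this is bounded below by $\log(1+x) - \frac{2x}{3} = f_0(x) > 0$, by Lemma~\ref{l21}. However, $x \le 1$ requires $d \ge c^{3/2} \approx 5.1$, which holds for $d \ge 8$. More generally, $\log(1+x) - \frac{2x}{3(1+x)} > 0$ for all $x > 0$, since $\log(1+x) \ge \frac{x}{1+x}$. So this term is positive, though it tends to $0$ as $d\to\infty$. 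We will therefore not rely on it for the bulk of the positivity and simply drop it.

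The remaining elementary terms must then be shown positive. Using $\log(1-2/d) + \frac{2}{d-2} \ge 0$ (this is $\log(1-t) + \frac{t}{1-t} \ge 0$ with $t = 2/d$), what is left is
$$
1 - \log 2 - \frac1d - \frac1{d+2} + \frac{1}{3d^2} > 0.307 - \frac18 - \frac1{10} > 0 \qquad (d \ge 8).
$$
Hence $g(d) > 0$ on $[8,\infty)$, and $f_2$ is increasing there.

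The main obstacle is bookkeeping: differentiating the term $d\log(1 + c\,d^{-2/3})$ correctly and making sure no negative contribution of order $\log d$ survives. That cancellation happens in the grouping $d\log(d-2) - (d+1)\log d$ against the factor $d^{d+1}$, leaving only $-1/d$. If the constants were tighter than expected, the fallback would be to keep the positive contribution $f_0(c\,d^{-2/3})$ and, if necessary, to check small $d$ numerically.
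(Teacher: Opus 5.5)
Your proof is correct. It follows the same overall strategy as the paper: show $(\log f_2)'>0$ by splitting it into nonnegative groups. Where it differs is in how the terms are grouped.

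The paper proceeds as follows. It combines the rational terms, $\frac{d}{d-2}-\frac1{d+2}-\frac{d+1}{d}=\frac{6d+4}{d(d^2-4)}>0$. It bounds the Airy-term derivative from below by $f_0(x)$, using $\frac{2x}{3(1+x)}\le\frac{2x}{3}$ and Lemma~\ref{l21}; this imposes the side condition $x=|a_1|2^{1/3}d^{-2/3}\le 1$. It is then left with $\log\frac{(d-2)e}{2d}$, which is positive exactly for $d>\frac{2e}{e-2}\approx 7.57$. That is where the threshold $d\ge 8$ comes from.

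You instead absorb $\log(1-2/d)$ into $\frac{2}{d-2}$ via $\log(1-t)+\frac{t}{1-t}\ge 0$. You handle the Airy term with $\log(1+x)\ge\frac{x}{1+x}$, which holds for all $x>0$. This removes the need for Lemma~\ref{l21}, which the paper uses only at this step, and for the condition $x\le 1$. Your residual $1-\log 2-\frac1d-\frac1{d+2}+\frac1{3d^2}$ is already positive at $d=6$ and increasing in $d$. So your grouping even gives monotonicity for $d\ge 6$.

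The paper's grouping, in exchange, keeps together the term $1-\log 2+\log(1-2/d)$ that carries the $(e/2)^d$ growth. That makes the origin of its threshold transparent.

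One cosmetic point: the constant $0.307$ in your display slightly overstates $1-\log 2\approx 0.30685$. Simply use $1-\log 2>0.3>\frac18+\frac1{10}$.
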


\begin{proof}
We have
\begin{eqnarray*}
\log f_2 (d) = d \log (d-2) + d \log \left(1 + |a_1|2^{1/3} d^{-2/3}\right) + d \\
- \log \pi - \log (d+2) - (d-1) \log 2 - (d+1) \log d - \frac1{3d},
\end{eqnarray*}
\begin{eqnarray*}
\left(\log f_2 (d)\right)' 
= \frac{d}{d-2} + \log (d-2) + \log \left(1 + |a_1|2^{1/3} d^{-2/3}\right)
- \frac{|a_1| 2^{4/3}}{3 \left(1 + |a_1|2^{1/3} d^{-2/3}\right) d^{2/3}} \\
+1 - \frac1{d+2} - \log 2 - \frac{d+1}d - \log d + \frac1{3d^2}.
\end{eqnarray*}
Furthermore,
$$
\frac{d}{d-2} - \frac1{d+2} - \frac{d+1}d = \frac{6d+4}{d(d^2-4)} > 0,
$$
$$
\log \left(1 + |a_1|2^{1/3} d^{-2/3}\right)
- \frac{|a_1| 2^{4/3}}{3 \left(d^{2/3} + |a_1|2^{1/3}\right)}
> \log \left(1 + |a_1|2^{1/3} d^{-2/3}\right)
- \frac{|a_1| 2^{4/3}}{3 d^{2/3}} > 0
$$ 
by virtue of Lemma \ref{l21} as we have $|a_1| 2^{1/3} d^{-2/3} \le 1$
if $d \ge \sqrt 2 |a_1|^{3/2} \approx 5,06$.
Thus,
$$
\left(\log f_2 (d)\right)' \ge
\log (d-2) + 1 - \log 2 - \log d = \log \left(\frac{(d-2) e}{2d}\right) > 0,
$$
whenever
$d > \frac{2e}{e-2} \approx 7,57$.
\end{proof}

{\it Proof of Corollary \ref{c13}.}
The preceding Lemma yields
\begin{equation}
\label{26}
f_1 (d) \ge f_2 (d) \ge f_2 (11) \approx 2,37 \qquad
\text{if} \quad d \ge 11.
\end{equation}
For any $\er \in (0,1)$ denote by $\tilde \er (\O)$ the number
$$
\tilde \er (\O) =  \frac{j_{\frac{d}2-1}^2 \oo_d^{2/d}}{(\mes_d \O)^{2/d}}
\left(1 - (1-\er)^{2/d}\right) > 0.
$$
Then repeating the arguments from the proof of Theorem \ref{t11} for 
the counting function at the point $(\la_1 (\O) - \tilde \er(\O))$ one obtain
$$
N_{\cal N} (\O, \la_1 - \tilde \er (\O)) 
\ge \frac2{d+2} \cdot \frac{\oo_d \mes_d \O}{(2\pi)^d} 
\left(\frac{j_{\frac{d}2-1}^2 \oo_d^{2/d}}{(\mes_d \O)^{2/d}} - \tilde \er (\O)\right)^{\frac{d}2}
= f_1 (d) (1 - \er).
$$
For sufficiently small $\er$ the inequality \eqref{26} implies
$$
N_{\cal N} (\O, \la_1 - \tilde \er) \ge 2,3 \quad \Rightarrow
\quad \mu_3 \le \la_1 - \tilde \er < \la_1
$$
whenever $d \ge 11$.
In the same manner,
$$
\mu_5 < \la_2 \qquad \text{whenever} \quad d \ge 11.
$$
For small values of $d$ we calculate the function $f_1(d)$ explicitly:
$$
\begin{array}{|c|c|c|c|c|}
\hline
& & & &\\
d & j_{\frac{d}2-1} & f_1(d) & \text{inequality with}\ \la_1 &
\text{inequality with}\ \la_2 \\
\hline
& & & &\\
6 & 5,1356 & 1,99 & & \mu_4 < \la_2 \\
\hline
& & & &\\
7 & 5,7635 & 2,71 & \mu_3 < \la_1 & \mu_6 < \la_2\\
\hline
& & & &\\
8 & 6,3802 & 3,72 & \mu_4 < \la_1 & \mu_8 < \la_2\\
\hline
& & & &\\
9 & 6,9879 & 5,15 & \mu_6 < \la_1 & \mu_{11} < \la_2\\
\hline
& & & &\\
10 & 7,5883 & 7,16 & \mu_8 < \la_1 & \mu_{15} < \la_2\\
\hline
\end{array}
$$
\qed

%%%%%%%%%%%%%%%%%%%%%%%%%%%%%%%%%%%%%%%%%%
\section{Convex domains}
In this section we prove Theorem \ref{t14}.

\subsection{Estimate for the difference $N_{\cal N} (\la) - N_{\cal D} (\la)$
via the exponentials}
The following Lemma is proved in \cite{FilSaf}.
We give a proof for the sake of completeness.

\begin{lemma}
\label{l31}
Let $\la > 0$.
Introduce the subspace
\begin{equation}
\label{30}
H_{\cal D} (\la) = \operatorname{Lin} \left\{\ph_1, \dots, \ph_k\right\}_{\la_k \le \la},
\end{equation}
where $\ph_j$ is the Dirichlet eigenfunction corresponding to $\la_j$.
Let $F \subset W_2^1 (\O)$ be a subspace such that
$$
F \cap H_{\cal D} = \{0\}, \qquad
- \D v = \la v, \qquad
\|\n v\|_{L_2(\O)}^2 \le \la \|v\|_{L_2(\O)}^2, \quad \forall \ v \in F.
$$
Then
$$
N_{\cal N} (\O, \la) - N_{\cal D} (\O, \la) \ge \dim F.
$$
\end{lemma}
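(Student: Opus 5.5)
The natural route is the min-max principle for the Neumann form $a[u]=\|\n u\|^2_{L_2(\O)}$ on $W_2^1(\O)$. By the variational characterization, $N_{\cal N}(\O,\la)\ge \dim G$ for any subspace $G\subset W_2^1(\O)$ on which $a[u]\le \la\|u\|^2$. So we will show that $G := H_{\cal D}(\la)\dotplus F$ has this property. Since $F\cap H_{\cal D}=\{0\}$, the sum is direct, and $\dim G = N_{\cal D}(\O,\la)+\dim F$. This gives the claim; if $\dim F=\infty$ we apply the argument to finite-dimensional subspaces of $F$.

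Take $u = h + v$ with $h\in H_{\cal D}(\la)$ and $v\in F$. Here $h\in \overset{\circ}{W}{}_2^1(\O)$ is a finite combination of Dirichlet eigenfunctions. Write $h=\sum_{\la_j\le\la} c_j\ph_j$. We expand
$$
\|\n u\|^2 - \la\|u\|^2 = \left(\|\n h\|^2-\la\|h\|^2\right) + \left(\|\n v\|^2-\la\|v\|^2\right) + 2\re\left( (\n h,\n v) - \la (h,v)\right).
$$
The first term equals $\sum(\la_j-\la)|c_j|^2\le 0$. The second is $\le 0$ by the hypothesis on $F$.

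The key step is the cross term. Because $h$ vanishes on the boundary (it lies in $\overset{\circ}{W}{}_2^1$) and $-\D v=\la v$ in $\O$ (in the distributional sense, with $v\in W_2^1$), Green's formula needs no boundary term. Indeed, by definition of the weak Laplacian tested against $\overset{\circ}{W}{}_2^1$ functions (approximating $h$ by $C_0^\infty$ functions), $(\n h,\n v) = (h,-\D v) = \la(h,v)$. Hence the cross term vanishes identically. This is where $h$ must be taken Dirichlet (not arbitrary) and $v$ must solve the equation exactly. Checking this identity carefully for nonsmooth $\O$ via density of $C_0^\infty(\O)$ in $\overset{\circ}{W}{}_2^1(\O)$ is the only point requiring care. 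It is nevertheless straightforward, since $v\in W_2^1$ gives $\n v\in L_2$ and $\D v = -\la v\in L_2$.

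Therefore $a[u]\le\la\|u\|^2$ on all of $G$, and the Neumann min-max principle gives $N_{\cal N}(\O,\la)\ge N_{\cal D}(\O,\la)+\dim F$, which is the statement of Lemma \ref{l31}.
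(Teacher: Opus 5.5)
Your proposal is correct and follows essentially the same argument as the paper. Both form $H_{\cal D}(\la)+F$, bound the two diagonal pieces of the form by $\la\|\cdot\|^2$, kill the cross term via Green's formula ($h$ is a Dirichlet combination and $-\D v=\la v$), and conclude with the Neumann min-max principle. Your density justification of Green's formula and the reduction to finite-dimensional $F$ are only minor refinements that the paper leaves implicit.
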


\begin{proof}
Let us consider the subspace $H_{\cal D} + F \subset W_2^1 (\O)$.
By assumptions,
$$
\dim \left(H_{\cal D} + F\right) = N_{\cal D} (\O, \la) + \dim F.
$$
Pick a function $w$ from this subspace,
$$
w = u + v, \quad u \in H_{\cal D}, \quad v \in F.
$$
Then we have
\begin{eqnarray*}
\int_\O |\n w|^2 dx = 
\int_\O \left(|\nabla u|^2 + |\nabla v|^2 +
2\, \re \langle \nabla u, \nabla v\rangle \right) dx, \\
\int_\O |\n u|^2 dx \le \la \int_\O |u|^2 dx, \quad
\int_\O |\n v|^2 dx \le \la \int_\O |v|^2 dx, \\
\int_\O \langle \nabla u, \nabla v\rangle 
= - \int_\O u \D \overline v \, dx = \la \int_\O u \overline v \, dx.
\end{eqnarray*}
Therefore,
$$
\int_\O |\n w|^2 dx
\le \la \int_\O\left(|u|^2 + |v|^2 + 2 \re (u \overline v)\right) dx
= \la \int_\O |w|^2 dx.
$$
Now, the variational principle implies
$$
N_{\cal N} (\O, \la) \ge \dim \left(H_{\cal D} + F\right).
\qquad \qedhere
$$
\end{proof}

Taking into account this Lemma and the inequality \eqref{15},
our goal now is to construct a such subspace $F$ for $\la \ge \la_1 (\O)$.

\subsection{Minimal width}
\label{smw}
Let $\O \subset \R^d$ be a bounded convex domain.
The distance between two supporting hyperplanes of $\O$
orthogonal to a vector $\vec e \in \R^d$ is called the width
of $\O$ in the direction $\vec e$.
We denote by $w_{\min} (\O)$ the smallest width of $\O$ 
over all directions.

\begin{lemma}
\label{l32}
Let $\O \subset \R^d$ be a bounded convex domain.
For each $k = 1, \dots, d$, there exists a $k$-dimensional plane $H_k$,
$\dim H_k = k$, such that
\begin{equation}
\label{31}
\mes_k (P_k) \ge \frac{w_{\min} (\O)^k}{k!},
\qquad \text{where}\quad P_k = \O \cap H_k.
\end{equation}
\end{lemma}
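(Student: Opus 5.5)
We argue by induction on $k$, building a nested chain of planes $H_1\subset H_2\subset\dots\subset H_d=\R^d$. Each section $P_k=\O\cap H_k$ will contain a $k$-simplex whose successive "heights" are all at least $w_{\min}(\O)$. For $k=1$, choose points $a,b\in\overline\O$ realizing the diameter, $|a-b|=\operatorname{diam}\O$. Since the width in any direction is at most the diameter, $|a-b|\ge w_{\min}(\O)$. Take $H_1$ to be the line through $a,b$. Then $\mes_1(P_1)=|a-b|\ge w_{\min}$, which is \eqref{31} for $k=1$.

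For the inductive step, suppose we already have a $(k-1)$-plane $H_{k-1}$ and points $x_0,\dots,x_{k-1}\in\overline\O\cap H_{k-1}$ forming a simplex $S_{k-1}$ with
$$
\mes_{k-1}(S_{k-1})\ge \frac{w_{\min}^{k-1}}{(k-1)!}.
$$
The key claim is the following: if $k-1<d$, there is a point $x_k\in\overline\O$ with $\operatorname{dist}(x_k,H_{k-1})\ge w_{\min}/2$. Take any unit vector $\vec e\perp H_{k-1}$. The width of $\O$ in direction $\vec e$ is at least $w_{\min}$. The function $x\mapsto\langle x,\vec e\rangle$ is constant on $H_{k-1}$, say equal to $c$, and its range over $\overline\O$ is an interval of length $\ge w_{\min}$. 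Hence some point $x_k$ satisfies $|\langle x_k,\vec e\rangle-c|\ge w_{\min}/2$.

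This gives only half the width, which would cost a factor $2^{-k}$. To avoid that loss, we choose the simplex maximally rather than greedily. Among all $k$-simplices with vertices in $\overline\O$, which lie in the compact set $\overline\O^{k+1}$, take one of maximal $k$-volume, $S_k=\operatorname{conv}(x_0,\dots,x_k)$, and let $H_k$ be its affine hull. Maximality gives that for each vertex $x_i$ the two hyperplanes (within $H_k$) through the opposite facet, shifted by $\pm h_i$ where $h_i$ is the height from $x_i$, bound $\overline\O\cap H_k$. The real task is to show $h_i\ge w_{\min}$, i.e. that $\O$ lies in a slab of width $h_i$. Restricting to $H_k$ only bounds the width of the section, and the section's width may be smaller than that of $\O$, so a different argument is needed.

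The cleaner route I would take is to maximize over the whole dimension. Take a simplex of maximal $d$-volume with vertices in $\overline\O$. Maximality implies that $\overline\O$ lies between the hyperplanes parallel to each facet at distance $h_i$ on either side of the facet's hyperplane. Only the side containing $x_i$ matters, since the other side is automatically bounded by the facet itself, up to the convex hull. This is exactly a slab of width $h_i$ in the normal direction, so $h_i\ge w_{\min}$.

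One then builds the nested faces by peeling vertices: $H_k=\operatorname{aff}(x_0,\dots,x_k)$. For $k<d$, however, we need the heights of the face simplices, not of the full one. The fix is to do the maximization lexicographically from the top down. First choose $x_0,x_1$ maximizing length. Then choose $x_2$ maximizing the distance to $H_1$, which is at least $w_{\min}$ by the argument of the preceding step applied to the slab defined by...

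This greedy-versus-width gap is the main obstacle: proving that some $k$-dimensional section has $k$-volume at least $w^k/k!$. I would try a Steinhaus/John-type argument, or induct on $d$ using the fact that the projection of $\O$ onto $\vec e^{\perp}$ has minimal width at least $w_{\min}$. With that, $\mes_k(P_k)\ge\mes_k(S_k)=\frac1k\,h_k\,\mes_{k-1}(S_{k-1})\ge w^k/k!$, using convexity of $\O$ so that $S_k\subset\overline{P_k}$.
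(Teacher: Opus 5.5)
Your proposal never establishes the inductive step. You say so yourself, and the ``lexicographic'' paragraph breaks off. Beyond that, the framework you work in cannot be completed. All your variants bound $\mes_k(P_k)$ from below by the volume of a $k$-simplex $S_k\subset\overline{P_k}$ whose successive heights are $\ge w_{\min}(\O)$. No such simplex need exist. For a disk of diameter $w$ (where $w_{\min}=w$), every inscribed triangle has area at most $\frac{3\sqrt3}{16}w^2<\frac{w^2}{2}$. The same obstruction holds for every $2$-dimensional section of a ball of diameter $w$ in $\R^d$.

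In particular, the key claim of your ``cleaner route'' is false. Maximality of the simplex only confines $\overline\O$ to a slab of width $2h_i$ around the hyperplane of the facet opposite $x_i$, because $\overline\O$ may extend beyond that facet. Take the unit square and the maximal triangle with vertices $(0,0),(1,0),(0,1)$: the point $(1,1)$ lies beyond the hypotenuse, and the height from $(0,0)$ is $1/\sqrt2<1=w_{\min}$. With a single apex per step you cannot beat the factor $1/2$ you identified.

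The missing idea, which is the paper's, is to keep the whole section $P_k$ rather than a simplex inside it, and to use both sides of the slab. Put $H_{k+1}=H_k+\R\vec e_{k+1}$ with $\vec e_{k+1}\perp H_k$. Take $x_1,x_2\in\overline\O\cap H_{k+1}$ on opposite sides of $H_k$, at distances $t_1,t_2$ from it with $t_1+t_2\ge w_{\min}(\O)$. By convexity, $\overline{P_{k+1}}$ contains the double cone $\operatorname{conv}(P_k\cup\{x_1,x_2\})$, whose volume is $\frac{t_1+t_2}{k+1}\mes_k P_k$. The induction then closes with no loss; for the disk it produces the inscribed square of area $w^2/2$.

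One caution if you write this up. For $k+1<d$ an arbitrary $\vec e_{k+1}\perp H_k$ does not work, because the points realizing the width in direction $\vec e_{k+1}$ need not lie in $H_k+\R\vec e_{k+1}$. For example, take a prism $T\times(0,L)$ over a triangle $T=ABC$, let $H_1$ be a vertical line close to the edge $\{A\}\times(0,L)$, and let $\vec e_2$ be parallel to $BC$. One must choose $\vec e_{k+1}$ so that the projection of $\overline\O$ onto $H_k^\perp$ has a chord of length $\ge w_{\min}$ in that direction through the point to which $H_k$ projects. The paper's text passes over this; it is automatic only when $k+1=d$.

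If you only need $k=d$, which is all Theorem~\ref{t33} uses, your projection idea does close. Let $\pi_u$ be orthogonal projection onto $u^\perp$ and $\ell(u)$ the longest chord of $\overline\O$ in direction $u$. Concavity of the chord length over $\pi_u\overline\O$ gives $\mes_d\O\ge\frac1d\,\ell(u)\,\mes_{d-1}(\pi_u\overline\O)$. Moreover $\ell(u)\ge w_{\min}(\O)$ (take a supporting hyperplane of $\overline\O-\overline\O$ at $\ell(u)u$), and $w_{\min}(\pi_u\overline\O)\ge w_{\min}(\O)$.
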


\begin{proof}
We prove this Lemma by induction in $k$.

Clearly, one can find two points $x_1^{(1)}, x_2^{(1)} \in \overline \O$
such that $|x_1^{(1)} - x_2^{(1)}| \ge w_{\min} (\O)$.
Now, take the line passing through these two points as $H_1$.
Thus, the claim in the case $k=1$ is proven.

Assume that $H_k$ satisfies \eqref{31}.
Choose a unit vector $\vec e_{k+1}$ orthogonal to the plane $H_k$.
We take the linear hull of $H_k$ and $\vec e_{k+1}$ as the next plane $H_{k+1}$,
$$
H_{k+1} = \left\{x = y + t \vec e_{k+1},\ y \in H_k, t \in \R\right\}.
$$
By definition of the minimal width of $\O$,
there are two points $x_1^{(k+1)}, x_2^{(k+1)} \in \overline \O$ such that
$$
x_1^{(k+1)} = y_1 + t_1 \vec e_{k+1}, \quad
x_2^{(k+1)} = y_2 - t_2 \vec e_{k+1}, \quad
y_1, y_2 \in H_k, \quad t_1, t_2 \ge 0, \quad t_1 + t_2 \ge w_{\min} (\O).
$$
Then the convex set $\overline{P_{k+1}} = \overline \O \cap H_{k+1}$
contains the convex hull of the set $P_k$ and of both points $x_1^{(k+1)}$, $x_2^{(k+1)}$.
The measure of this convex hull is at least
$$
\frac{t_1 \mes_k P_k}{k+1} + \frac{t_2 \mes_k P_k}{k+1}. 
$$
Now, due to \eqref{31}
$$
\mes_{k+1} P_{k+1} \ge \frac{(t_1+ t_2) \mes_k P_k}{k+1} 
\ge \frac{w_{\min} (\O)^{k+1}}{(k+1)!},
$$
which is \eqref{31} for the index $k+1$.
\end{proof}

Choosing $k=d$ in \eqref{31} we obtain

\begin{theorem}
\label{t33}
Let $\O \subset \R^d$ be a bounded convex domain.
Put
\begin{equation}
\label{32}
w \equiv w(\O) := \left(d! \mes_d \O\right)^{1/d}.
\end{equation}
Then
\begin{equation}
\label{33}
w_{\min} (\O) \le w (\O).
\end{equation}
\end{theorem}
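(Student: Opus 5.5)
The statement follows directly from Lemma \ref{l32} with $k=d$. In that case $H_d$ is a $d$-dimensional plane in $\R^d$, so $H_d=\R^d$ and $P_d=\O\cap H_d=\O$. The inequality \eqref{31} therefore becomes
$$
\mes_d \O \ge \frac{w_{\min}(\O)^d}{d!}.
$$
Multiplying by $d!$ and taking the $d$-th root, which is monotone on $[0,\infty)$, gives $w_{\min}(\O)\le (d!\,\mes_d\O)^{1/d}=w(\O)$. This is \eqref{33}.

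So the plan is: invoke Lemma \ref{l32} at $k=d$, note that $H_d$ is the whole space, and rearrange. The only substantive work was the inductive volume estimate, already carried out in Lemma \ref{l32}. There is no real obstacle here. The one point to check is that $H_d$ exhausts $\R^d$, and this holds because the construction in Lemma \ref{l32} produces a plane of full dimension $d$.
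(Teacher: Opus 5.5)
Your argument is correct and is exactly the paper's: Theorem~\ref{t33} is obtained by putting $k=d$ in Lemma~\ref{l32}, where $H_d=\R^d$ and $P_d=\O$, and rearranging. One caveat concerns Lemma~\ref{l32} itself rather than your step: its inductive argument tacitly assumes that points of $\overline\O$ realizing the width in an arbitrary direction $\vec e_{k+1}\perp H_k$ lie in $H_{k+1}$, which can fail when $k+1<d$; inducting on projections instead avoids this and still gives \eqref{33}, via $\mes_d\O\ge\frac1d\,\ell\,\mes_{d-1}(\pi\O)$, where $\pi$ is the orthogonal projection along a unit vector $\vec e$ and $\ell\ge w_{\min}(\O)$ is the length of the longest chord parallel to $\vec e$, together with $w_{\min}(\pi\O)\ge w_{\min}(\O)$.
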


\begin{rem}
We provided the proof of Theorem \ref{t33} for the sake of completeness.
The question about minimizing the volume of convex domain of given minimal width in $\R^d$
is known as Kakeya-P\'al problem.
The answer is known \cite{Pal} for the case $d=2$ only:
the optimal domain is a regular triangle of height $w$.
This fact yields the following improvement of the estimate \eqref{33}:
$$
w_{\min} (\O)^d \le \frac{\sqrt 3}2\, d! \mes_d \O.
$$
It was done in \cite{Firey}.
Another improvement is contained in \cite[the proof of Theorem 6.2]{Bezdek}.
\end{rem}

In Theorem \ref{t14} we assume that the domain $\O$ is convex.
The only advantage of it we use is the fact that $\O$ is contained in a layer
of width $w(\O)$.

\begin{theorem}
\label{t335}
Assume that a bounded domain $\O \subset \R^d$ is contained in a layer
of width $w(\O)$ defined in \eqref{32}.
Then 
\begin{equation*} 
\Psi (d,k,\O) \ge C_{conv} \left(\frac{e}2\right)^d 
\end{equation*}
for all $k$.
Here $C_{conv}>0$ is an absolute constant.
\end{theorem}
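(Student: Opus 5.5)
Set $\la = \la_k(\O) \ge \la_1(\O)$. By \eqref{15} together with Lemma \ref{l31}, it is enough to construct a subspace $F \subset W_2^1(\O)$ with $F \cap H_{\cal D}(\la) = \{0\}$, $-\D v = \la v$ and $\|\n v\|^2 \le \la \|v\|^2$ for every $v\in F$, and with $\dim F \ge C_{conv}(e/2)^d$. Rotate and translate so that $\O \subset \{0 < x_d < w\}$, where $w = w(\O)$. The natural candidates are plane waves $e^{i\xi\cdot x}$ with $|\xi|^2 = \la$. They satisfy $-\D v=\la v$ and $|\n v|^2=\la|v|^2$ pointwise, but sums of them do not preserve the gradient inequality, so a modification is needed.

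I would use the layer structure and take
$$
v(x) = e^{i\eta\cdot x'}\cos(\tau x_d), \qquad x = (x', x_d), \quad |\eta|^2 + \tau^2 = \la .
$$
Here $|\n v|^2 = |\eta|^2\cos^2(\tau x_d) + \tau^2\sin^2(\tau x_d)$. If $\tau \ge |\eta|$ this is $\ge \la|v|^2$, which is the wrong direction. If $|\eta| \ge \tau$ the pointwise bound $|\n v|^2 \le \la |v|^2$ also fails where $\cos$ is small. So products alone are not enough, and I would instead work with the orthogonality of different $\eta$.

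Take $F = \operatorname{Lin}\{ e^{i\eta\cdot x'} g(x_d)\}$ with $\eta$ ranging over a finite family, where for each $\eta$ the function $g$ solves $-g'' = (\la-|\eta|^2) g$. Set $\tau_\eta=(\la-|\eta|^2)^{1/2}$ and choose $g=\cos(\tau_\eta x_d)$, or $\cosh$ of the appropriate argument when $|\eta|^2>\la$. For a general element $v=\sum_\eta c_\eta e^{i\eta\cdot x'}g_\eta(x_d)$ the quadratic form $\int_\O |\n v|^2 - \la|v|^2$ contains cross terms, because $\O$ is not a product. This is where I would bring in Green's formula: for $-\D v=\la v$ one has $\int_\O(|\n v|^2-\la|v|^2) = \int_{\dd\O}\bar v\,\dd_\nu v$. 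The plan is to pick the family so that this boundary form is nonpositive.

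A cleaner alternative is to take all $\eta$ with $|\eta|^2 \le \la - (\pi/w)^2$ and $g_\eta(x_d)=\sin(\tau_\eta x_d)$ with $\tau_\eta\ge\pi/w$. I would need to check whether the sign of the boundary form can be controlled this way, and I expect this sign condition to be the main obstacle. Failing a direct argument, I would fall back on the standard Fourier-analytic approach: project onto an orthonormal plane-wave family and estimate $\dim F$ from below by the number of lattice directions in a spherical shell.

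For the counting, the dimension of such a family is essentially the number of linearly independent exponentials in a $(d-1)$-dimensional ball of radius about $\sqrt{\la}$. These exponentials are linearly independent on any open set, so one can take any finite set. The real constraint comes from $F\cap H_{\cal D}=\{0\}$ and from the sign condition, and should amount to $\dim F \gtrsim \oo_{d-1}\la^{(d-1)/2}\mes_{d-1}(\text{cross-section})/(2\pi)^{d-1}$, in the spirit of Kröger's bound (Theorem \ref{t21}).

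It then remains to insert $\la \ge \la_1(\O)\ge j_{d/2-1}^2\oo_d^{2/d}(\mes_d\O)^{-2/d}$ from Faber–Krahn (Theorem \ref{t22}) and $w^d = d!\mes_d\O$ from \eqref{32}. Stirling's formula, as in the proof of Corollary \ref{c12}, should then turn $(\sqrt{\la}\,w)^d\oo_d/(2\pi)^d$ into order $(e/2)^d$ up to polynomial factors. Those factors have to be absorbed, perhaps by using the $d^{1/3}$ gain from \eqref{23}.
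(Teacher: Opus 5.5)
You have the right reduction (Lemma~\ref{l31} with \eqref{15}, then Faber--Krahn and Stirling). The first family you wrote, $v=e^{i\eta\cdot x'}\cos(\tau x_d)$ with $|\eta|^2+\tau^2=\lambda$, is exactly the paper's family, but you discarded it because of an algebra slip. Pointwise,
\[
|\nabla v|^2-\lambda|v|^2=\tau^2\bigl(\sin^2(\tau x_d)-\cos^2(\tau x_d)\bigr)=-\tau^2\cos(2\tau x_d),
\]
and this does not depend on how $|\eta|$ compares with $\tau$. Center the layer, $\Omega\subset\{|x_d|\le w/2\}$, and take $\tau=\delta=1/w$; this is admissible since $\lambda\ge\lambda_1\ge\delta^2$. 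Then $|2\tau x_d|\le1$ on $\Omega$, and each such function has the strict margin $\lambda\|v\|^2-\|\nabla v\|^2\ge\mes_d\Omega/(2w^2)$, which is \eqref{36}. The whole proof runs on this uniform diagonal margin, not on a sign condition for boundary terms.

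The genuine gap is the step you name but never resolve: the cross terms for linear combinations. None of your routes supplies a mechanism.
The boundary form $\int_{\partial\Omega}\bar v\,\partial_\nu v$ has no controllable sign on an arbitrary domain.
With $g_\eta=\sin(\tau_\eta x_d)$ and $\tau_\eta\ge\pi/w$, even a single function has $\lambda\|v\|^2-\|\nabla v\|^2=-\tau_\eta^2\int_\Omega\cos(2\tau_\eta x_d)\,dx$, which is negative if $\Omega$ concentrates near a face of the layer.
``Lattice directions in a shell'' is not an argument.
Nor is $F\cap H_{\cal D}(\lambda)=\{0\}$ a real constraint: it holds after an arbitrarily small perturbation of the frequencies (Lemma~\ref{l39}).

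The missing idea is the paper's matrix argument. On $\operatorname{Lin}\{u_{\eta_1},\dots,u_{\eta_N}\}$ the form $\lambda\|f\|^2-\|\nabla f\|^2$ has matrix $B=B_0+B_1$ with $B_0\ge a=\mes_d\Omega/(2w^2)$. The off-diagonal part is controlled only \emph{on average}: $\|B_1\|_{S_2}^2$ is bounded when the $\eta_j$ are independent on the sphere $|\eta|^2=\lambda-\delta^2$. This uses the two-point formula for sphere integrals, Plancherel in the transverse variables (Lemma~\ref{l311}, where $w$ and $\mes_d\Omega$ enter), Lemma~\ref{l312} and Corollary~\ref{c316}. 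At most $\|B_1\|_{S_2}^2/a^2$ eigenvalues of $B_1$ lie below $-a$ (Lemma~\ref{l318}). Optimizing $N$ then yields a positive subspace of dimension $\ge a^2/(4b)$ (Lemma~\ref{l319}), whose logarithm is $(1-\log2)d+|a_1|2^{1/3}d^{1/3}+O(\log d)$. Small $d$ are covered by $\Psi\ge1$.

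Finally, your heuristic count $(\sqrt\lambda\,w)^d\omega_d/(2\pi)^d$ overshoots by the factor $d!$ hidden in $w^d=d!\,\mes_d\Omega$. At the Faber--Krahn value of $\lambda$ it is about $(d/2)^d$, which Theorem~\ref{t16} rules out, since a ball lies in a layer of width $w(B)$. The right scale is the Weyl term $\omega_d\,\mes_d\Omega\,\lambda^{d/2}/(2\pi)^d$, whose logarithm there is $(1-\log2)d+O(d^{1/3})$.
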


Clearly, Theorem \ref{t14} follows from Theorem \ref{t33} and Theorem \ref{t335}.

\subsection{Functions $u_\eta$}
In the sequel in this section we assume
$$
x = (x_1, y) \in \R^d, \qquad x_1 \in \R, \quad y \in \R^{d-1},
$$
$$
\O \subset\left\{x = (x_1, y) : |x_1| \le \frac{w}2\right\},
$$
where the number $w = w(\O)$ is defined by the formula \eqref{32}.

\begin{lemma}
\label{l36}
Let $\de > 0$.
Then
$$
\int_\O \cos (2 x_1 \de) dx \ge \left(1 - \frac{w^2\de^2}2\right) \mes_d \O.
$$
\end{lemma}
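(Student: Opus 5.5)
The plan is to use the elementary pointwise inequality $\cos t \ge 1 - \frac{t^2}{2}$, valid for all real $t$, together with the geometric assumption of this subsection that $\O$ lies in the layer $|x_1| \le w/2$. No spectral information is needed; the statement is purely a volume estimate.

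First we establish the inequality $\cos t \ge 1 - t^2/2$. Put $g(t) = \cos t - 1 + t^2/2$. Then $g(0) = 0$ and $g'(t) = t - \sin t$, which is nonnegative for $t \ge 0$. Since $g$ is even, it follows that $g \ge 0$ on all of $\R$.

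Next, for $x = (x_1, y) \in \O$ we have $|x_1| \le w/2$, so $(2 x_1 \de)^2 = 4 x_1^2 \de^2 \le w^2 \de^2$. Applying the inequality with $t = 2 x_1 \de$ gives, pointwise on $\O$,
$$
\cos (2 x_1 \de) \ge 1 - 2 x_1^2 \de^2 \ge 1 - \frac{w^2 \de^2}{2}.
$$
Integrating this over $\O$ yields the claim, since the right-hand side is constant and $\int_\O dx = \mes_d \O$.

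No step here is a genuine obstacle. The only thing to check is that the bound holds pointwise without any sign condition on $1 - w^2\de^2/2$. This is automatic, because the lower bound is a constant and integration preserves the inequality whatever its sign.
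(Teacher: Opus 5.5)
Your proof is correct and is essentially the paper's argument: the paper likewise uses $0 \le 1-\cos z \le z^2/2$ with $z = 2x_1\de$ and $|x_1|\le w/2$, bounding $\bigl|\mes_d \O - \int_\O \cos(2x_1\de)\,dx\bigr| \le \int_\O 2x_1^2\de^2\,dx \le \frac{w^2\de^2}{2}\mes_d\O$. Your version integrates the one-sided pointwise bound directly, which is the same estimate without the detour through the absolute value.
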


\begin{proof}
The inequality
$$
0 \le 1 - \cos z \le \frac{z^2}2 \qquad \forall \ z \in \R
$$
implies the bound
$$
\left| \mes_d \O - \int_\O \cos (2 x_1 \de) dx\right|
\le \int_\O \left|1-\cos (2 x_1 \de)\right| dx
\le \int_\O 2 x_1^2 \de^2 dx
\le \frac{w^2\de^2}2\, \mes_d \O.
\quad \qedhere
$$
\end{proof}

Let
$$
\la \ge \de^2, \qquad \eta \in \R^{d-1}, \qquad |\eta|^2 + \de^2 = \la.
$$
Introduce the function
\begin{equation}
\label{34}
u_\eta (x) = \cos (x_1 \de) \,e^{i\<y,\eta\>},
\qquad \text{where} \quad x = (x_1, y).
\end{equation}
It is clear that
\begin{equation}
\label{35}
- \D u_\eta = \la u_\eta.
\end{equation}

\begin{lemma}
\label{l37}
Let
$$
\eta, \ze \in \R^{d-1}, \qquad
|\eta| = |\ze| = \sqrt{\la - \de^2}.
$$
Then
$$
\la \int_\O u_\eta\, \overline{u_\ze}\, dx - \int_\O \<\n u_\eta, \n u_\ze\> dx
= \int_\O \left(\de^2 \cos(2x_1\de) + \frac{|\eta-\ze|^2}2 \cos^2 (x_1\de)\right)
e^{i\<y, \eta-\ze\>} dx.
$$
\end{lemma}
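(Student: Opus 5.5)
The identity is pointwise, so I will verify it for the integrands at every $x\in\O$ and then integrate over $\O$; no boundary terms or properties of $\O$ are needed. I use the convention $\<a,b\> = \sum_j a_j \overline{b_j}$ for complex vectors, consistent with the proof of Lemma \ref{l31}.

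First I compute the gradient of $u_\eta$ from \eqref{34}. With $x=(x_1,y)$,
$$
\n u_\eta(x) = \left(-\de \sin(x_1\de),\ i\eta\cos(x_1\de)\right) e^{i\<y,\eta\>}.
$$
Taking the Hermitian product with the analogous expression for $\n u_\ze$ gives
$$
\<\n u_\eta, \n u_\ze\> = \left(\de^2\sin^2(x_1\de) + \<\eta,\ze\>\cos^2(x_1\de)\right) e^{i\<y,\eta-\ze\>},
$$
because $i\cdot\overline{i}=1$ and $\eta,\ze$ are real. Similarly,
$$
\la u_\eta \overline{u_\ze} = \la\cos^2(x_1\de)\, e^{i\<y,\eta-\ze\>}.
$$

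Next I subtract and use the hypotheses. Since $\la = \de^2+|\eta|^2$, the coefficient of $e^{i\<y,\eta-\ze\>}$ in the difference is
$$
\de^2\left(\cos^2(x_1\de)-\sin^2(x_1\de)\right) + \left(|\eta|^2-\<\eta,\ze\>\right)\cos^2(x_1\de).
$$
The first bracket equals $\cos(2x_1\de)$. Because $|\eta|=|\ze|$, we have $|\eta-\ze|^2 = 2|\eta|^2 - 2\<\eta,\ze\>$, so the second bracket equals $|\eta-\ze|^2/2$. This gives the claimed integrand pointwise.

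Integrating over $\O$ then yields the statement. All integrals are finite since $\O$ is bounded and the functions are smooth and bounded. There is no real obstacle here; the only point needing care is to use the equal lengths $|\eta|=|\ze|$ when rewriting $|\eta|^2-\<\eta,\ze\>$.
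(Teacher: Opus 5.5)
Your proposal is correct and follows the paper's proof: you compute $\int_\O u_\eta\overline{u_\ze}\,dx$ and $\int_\O \<\n u_\eta,\n u_\ze\>\,dx$ explicitly and combine them using $\la=\de^2+|\eta|^2$ and $|\eta|=|\ze|$. This gives exactly the paper's identity $\frac{|\eta-\ze|^2}{2}=\la-\de^2-\<\eta,\ze\>$.
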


\begin{proof}
We have
$$
\int_\O u_\eta\, \overline{u_\ze}\, dx
= \int_\O \cos^2 (x_1\de) e^{i\<y, \eta-\ze\>} dx,
$$
$$
\int_\O \<\n u_\eta, \n u_\ze\> dx
= \int_\O \left(\de^2 \sin^2 (x_1 \de) + \<\eta, \ze\> \cos^2 (x_1 \de)\right) e^{i\<y, \eta-\ze\>} dx.
$$
Taking into account the identity
$$
\frac{|\eta-\ze|^2}2 = \la - \de^2 - \<\eta,\ze\>,
$$
we obtain the result.
\end{proof}

Lemma \ref{l36} and Lemma \ref{l37} imply the following

\begin{cor}
\label{c38}
Let $\eta \in \R^{d-1}$, $|\eta| = \sqrt{\la - \de^2}$.
Then
$$
\la \int_\O |u_\eta|^2 dx - \int_\O |\n u_\eta|^2 dx 
\ge \de^2 \left(1 - \frac{w^2\de^2}2\right) \mes_d \O.
$$
\end{cor}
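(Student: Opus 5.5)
Corollary \ref{c38} is the diagonal case $\ze = \eta$ of Lemma \ref{l37}, combined with the bound of Lemma \ref{l36}. We carry this out as follows.

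Put $\ze = \eta$ in Lemma \ref{l37}. Then $\eta - \ze = 0$, so the exponential factor $e^{i\<y,\eta-\ze\>}$ equals $1$ and the term with $|\eta-\ze|^2$ vanishes. On the left-hand side we have $u_\eta\,\overline{u_\eta} = |u_\eta|^2$ and $\<\n u_\eta, \n u_\eta\> = |\n u_\eta|^2$; the latter holds because the pairing in Lemma \ref{l37} is the Hermitian one, as its proof shows. Hence
$$
\la \int_\O |u_\eta|^2 dx - \int_\O |\n u_\eta|^2 dx = \de^2 \int_\O \cos(2x_1\de)\, dx .
$$
As a sanity check, compute directly: $|u_\eta|^2 = \cos^2(x_1\de)$ and $|\n u_\eta|^2 = \de^2\sin^2(x_1\de) + |\eta|^2\cos^2(x_1\de)$. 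Using $|\eta|^2 = \la - \de^2$, the integrand of the difference is $\de^2(\cos^2 - \sin^2)(x_1\de) = \de^2\cos(2x_1\de)$, as claimed.

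Next apply Lemma \ref{l36}. It is valid here because $\O$ lies in the layer $|x_1| \le w/2$, and $\de > 0$. It gives
$$
\int_\O \cos(2x_1\de)\,dx \ge \left(1 - \frac{w^2\de^2}2\right)\mes_d \O .
$$
Multiplying by $\de^2 \ge 0$ yields the claim.

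There is no real obstacle. The only point needing care is that Lemma \ref{l37} is stated with complex conjugation, so the diagonal case produces genuine squared moduli. The degenerate case $\de = 0$ is trivial, since both sides then vanish.
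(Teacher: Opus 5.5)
Your proposal is correct and follows the same route as the paper, which obtains the corollary directly from Lemma \ref{l37} (diagonal case $\ze=\eta$, giving $\de^2\int_\O \cos(2x_1\de)\,dx$) combined with Lemma \ref{l36}. Your remark that the pairing in Lemma \ref{l37} is Hermitian, and your direct check of the identity, are both accurate.
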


In the sequel we take
$$
\de = \frac1{w(\O)}.
$$
Then

\begin{equation}
\label{36}
\la \int_\O |u_\eta|^2 dx - \int_\O |\n u_\eta|^2 dx 
\ge \frac{\mes_d \O}{2 w(\O)^2}.
\end{equation}

\begin{rem}
In the sequel we consider 
$$
\la \ge \la_1 (\O) \ge \frac{j_{\frac{d}2-1}^2 \oo_d^{2/d}}{(\mes_d \O)^{2/d}},
\qquad \text{and} \qquad \de^2 = \frac1{\left(d! \mes_d\O\right)^{2/d}}.
$$
Note that for each $d \ge 2$ we have
$j_{\frac{d}2-1} \ge j_0 > 1$, and
$$
\oo_d = \frac{\pi^{d/2}}{\G \left(\frac{d}2+1\right)} 
> \frac1{\G \left(\frac{d}2+1\right)} > \frac1{d!},
$$ 
so the condition $\la \ge \de^2$ is fulfilled.
\end{rem}

\subsection{Matrix $B(H)$}
We consider the set of $N$ points on the sphere $S_{\sqrt{\la-\de^2}}^{d-2}$,
$$
H := \left\{\eta_1, \dots, \eta_N\right\} \in \left(S_{\sqrt{\la-\de^2}}^{d-2}\right)^N.
$$
The number $N$ will be chosen later.
For each such set $H$ we define the linear space
$$
F (H) = \operatorname{Lin} \left\{u_{\eta_1}, \dots, u_{\eta_N}\right\},
$$
the functions $u_\eta$ here are defined by the formula \eqref{34}.
Clearly,
$$
F (H) \subset W_2^1 (\O).
$$
If all points $\eta_1, \dots, \eta_N$ are different then $\dim F(H) = N$.
Denote by $\Sigma^{(1)} \subset \left(S_{\sqrt{\la-\de^2}}^{d-2}\right)^N$
the set of such sets $H$ for which
\begin{equation}
\label{375}
F (H) \cap H_{\cal D} (\la) = \{0\},
\end{equation}
the space $H_{\cal D} (\la)$ is defined by \eqref{30}.

\begin{lemma}
\label{l39}
The set $\Sigma^{(1)}$ is everywhere dense in $\left(S_{\sqrt{\la-\de^2}}^{d-2}\right)^N$.
\end{lemma}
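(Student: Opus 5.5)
We will show that the complement of $\Sigma^{(1)}$ is contained in the zero set of a nontrivial real-analytic function on the connected real-analytic manifold $M := \left(S_{\sqrt{\la-\de^2}}^{d-2}\right)^N$. Such a zero set is closed and nowhere dense, so $\Sigma^{(1)}$ is dense. (If $\la = \de^2$ the sphere is a point and the statement needs separate treatment. We therefore assume $\la > \de^2$; for $d \ge 3$ the sphere $S^{d-2}$ is then connected. The case $d=2$, where the ``sphere'' is two points, needs a slightly separate check of the same kind.)

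Set $m = N_{\cal D}(\O,\la) = \dim H_{\cal D}(\la)$ and fix the orthonormal basis $\ph_1,\dots,\ph_m$. Consider the Gram matrix $G(H)$ of the $m+N$ functions $\ph_1,\dots,\ph_m,u_{\eta_1},\dots,u_{\eta_N}$ in $L_2(\O)$, and put $D(H) = \det G(H)$. Then $D(H)\ne 0$ exactly when these functions are linearly independent, i.e. when $\dim F(H) = N$ and \eqref{375} holds. Each entry of $G(H)$ is an integral over the bounded set $\O$ of a function depending analytically on $\eta_j$ through $e^{i\<y,\eta_j\>}$. These entries are therefore restrictions to $M$ of entire functions of $(\eta_1,\dots,\eta_N)\in\C^{(d-1)N}$, so $D$ is real-analytic on $M$. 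Hence $\{D=0\}$ either is all of $M$ or has empty interior. It remains to exhibit one point $H\in M$ with $D(H)\ne0$.

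The key step, and the main obstacle, is to produce such a point. We want $N$ distinct points on the sphere such that no nontrivial combination $\sum c_j u_{\eta_j}$ lies in $H_{\cal D}(\la)$. Suppose $v=\sum c_j u_{\eta_j} = \sum_k a_k\ph_k$. Then $v\in \overset{\circ}{W}{}_2^1(\O)$, and both sides satisfy $-\D v = \la v$. Extend $v$ by zero outside $\O$; this extension lies in $W_2^1(\R^d)$, has compact support, and satisfies $-\D v=\la v$ in $\R^d$ in the distributional sense. That last claim needs justification, so we argue instead through the Fourier transform. For a Dirichlet eigenfunction $\ph$ extended by zero, and for any $\xi$ with $|\xi|^2 = \la$, integrating by parts gives $\int_\O \ph\, e^{-i\<x,\xi\>}dx = 0$ whenever $\la_k = \la$, because no boundary term appears in the weak form for $\ph\in \overset{\circ}{W}{}_2^1$. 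If instead $\la_k<\la$, pair with $\ph$ only via the orthogonality relations.

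A cleaner route is to argue by dimension. The space $\operatorname{Lin}\{u_\eta : |\eta|^2 = \la-\de^2\}$ is infinite-dimensional on $\O$: the functions $e^{i\<y,\eta\>}$ for distinct $\eta$ are linearly independent on any open subset of $\R^{d-1}$, and $\cos(x_1\de)\ne0$ on an open part of $\O$. Since $H_{\cal D}(\la)$ is finite-dimensional, we can choose points $\eta_1,\dots,\eta_{N}$ inductively. At step $j$, the set of $\eta$ on the sphere for which $u_\eta \in \operatorname{Lin}\{\ph_1,\dots,\ph_m,u_{\eta_1},\dots,u_{\eta_{j-1}}\}$ cannot be the whole sphere. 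Otherwise infinitely many linearly independent functions would lie in a finite-dimensional space, which is impossible for $d\ge3$. We may then pick $\eta_j$ outside this set. The resulting $H$ satisfies $D(H)\ne 0$.

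Combining the two steps, $\{D=0\}$ is a proper analytic subset of the connected manifold $M$ and is therefore nowhere dense. Its complement lies in $\Sigma^{(1)}$, so $\Sigma^{(1)}$ is everywhere dense, as claimed.
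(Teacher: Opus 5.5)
Your argument is correct, but it reaches density by a different mechanism than the paper. The paper works locally. Given an arbitrary $H=(\eta_1,\dots,\eta_N)$, it perturbs the points one at a time, choosing $\tilde\eta_j$ in an arbitrarily small neighborhood of $\eta_j$ with $u_{\tilde\eta_j}\notin\operatorname{Lin}\{\ph_1,\dots,\ph_k,u_{\tilde\eta_1},\dots,u_{\tilde\eta_{j-1}}\}$. This works because the $u_\eta$ are linearly independent, so a finite-dimensional space contains only finitely many of them, while every neighborhood on the sphere is infinite.

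You use the same independence fact only globally, to produce one configuration with nonzero Gram determinant $D$. You then spread genericity to every neighborhood via real-analyticity of $D$ and connectedness of $M$. Your route gives a stronger conclusion: the complement of $\Sigma^{(1)}$ lies in a proper analytic subset, so $\Sigma^{(1)}$ contains an open dense set of full measure. The cost is the analyticity and identity-theorem machinery, which is not needed here. Your own inductive step shows the exceptional set at stage $j$ has at most $m+j-1$ points, so you could make each choice inside any prescribed neighborhood and get density directly, exactly as the paper does. Openness of $\{D\neq 0\}$, if wanted, follows from continuity of $D$ alone. Both arguments need $d\ge 3$ and $\la>\de^2$, and both hold in the application ($d\ge 5$, $\la\ge\la_1>\de^2$).

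One correction: delete the abandoned Fourier-transform paragraph, because its claims are false, not merely unjustified. The zero extension of a Dirichlet eigenfunction does not satisfy $-\D v=\la v$ in $\R^d$; for smooth $\dd\O$ there is a single-layer term carrying $\dd\ph/\dd\nu$. Likewise, Green's formula for $\ph$ and $e^{-i\<x,\xi\>}$ with $|\xi|^2=\la$ yields only $\int_{\dd\O}\frac{\dd\ph}{\dd\nu}\,e^{-i\<x,\xi\>}\,dS=0$, not $\int_\O\ph\,e^{-i\<x,\xi\>}\,dx=0$. Your final argument does not use that paragraph, so removing it leaves the proof intact.
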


\begin{proof}
Pick an arbitrary set
$$
\left\{\eta_1, \dots, \eta_N\right\} \in \left(S_{\sqrt{\la-\de^2}}^{d-2}\right)^N.
$$
All functions $\{u_\eta\}_{\eta \in S_{\sqrt{\la-\de^2}}^{d-2}}$ are linearly independent.
Therefore in an arbitrarily small neighborhood of the point $\eta_1$
one can find a point $\tilde \eta_1$ such that
$u_{\tilde \eta_1} \notin H_{\cal D} (\la)$.
In any neighborhood of the point $\eta_2$ one can find a point $\tilde \eta_2$ such that
$$
u_{\tilde \eta_2} \notin \operatorname{Lin} \left\{u_{\tilde \eta_1}, \ph_1, \dots, \ph_k\right\}.
$$
In any neighborhood of the point $\eta_3$ one can find a point $\tilde \eta_3$ such that
$$
u_{\tilde \eta_3} \notin 
\operatorname{Lin} \left\{u_{\tilde \eta_1}, u_{\tilde \eta_2}, \ph_1, \dots, \ph_k\right\},
$$
and so on.
For the final set $\{\tilde\eta_1, \dots, \tilde\eta_N\}$ all the functions
$u_{\tilde \eta_1}, \dots, u_{\tilde \eta_N}, \ph_1, \dots, \ph_k$
are linearly independent, and therefore,
$$
F \left(\left\{\tilde\eta_1, \dots, \tilde\eta_N\right\}\right) \cap H_{\cal D} (\la) = \{0\},
$$
and 
$\left\{\tilde\eta_1, \dots, \tilde\eta_N\right\} \in \Sigma^{(1)}$.
\end{proof}

If 
$$
f = \sum_{j=1}^N c_j u_{\eta_j} \in F (H), \qquad c_j \in \C,
$$
then by \eqref{35}
$$
- \D f(x) = \la f(x),
$$
and
$$
\la \int_\O |f(x)|^2 dx - \int_\O |\n f(x)|^2 dx = (B \vec c, \vec c).
$$
Due to Lemma \ref{l37} the entries of the $(N\times N)$-matrix $B$ equal to
\begin{eqnarray*}
B_{kj} =
\la \int_\O u_{\eta_j}\, \overline{u_{\eta_k}}\, dx 
- \int_\O \<\n u_{\eta_j}, \n u_{\eta_k}\> dx \\
= \int_\O \left(\de^2 \cos(2x_1\de) + \frac{|\eta_j-\eta_k|^2}2 \cos^2 (x_1\de)\right)
e^{i\<y, \eta_j-\eta_k\>} dx.
\end{eqnarray*}
Represent the matrix $B$ as a sum of diagonal and off-diagonal parts:
$$
B = B_0 + B_1, \qquad B_0 = \operatorname{diag} (B_{jj}).
$$
By virtue of Corollary \ref{c38} and formula \eqref{36} we have
\begin{equation}
\label{38}
B_0 \ge \frac{\mes_d \O}{2 w(\O)^2}\, I_{N\times N}.
\end{equation}
Let us estimate the Hilbert-Schmidt norm of the matrix $B_1$.
We have
\begin{eqnarray*}
|B_{kj}|^2 
= \left|\de^2 \int_\O \cos(2x_1\de) e^{i\<y, \eta_j-\eta_k\>} dx
+ \frac{|\eta_j-\eta_k|^2}2 \int_\O \cos^2 (x_1\de) e^{i\<y, \eta_j-\eta_k\>} dx\right|^2 \\
\le \left(\de^4 + \frac{|\eta_j-\eta_k|^4}4\right)
\left(G_1 (\eta_j-\eta_k) + G_2 (\eta_j-\eta_k)\right),
\end{eqnarray*}
where we used the notation
\begin{equation}
\label{39}
G_1 (\te) = \left|\int_\O \cos(2x_1\de) e^{i\<y,\te\>} dx\right|^2, \quad
G_2 (\te) = \left|\int_\O \cos^2 (x_1\de) e^{i\<y,\te\>} dx\right|^2,
\end{equation}
and the inequality $|ab+cd|^2 \le (|a|^2+|c|^2) (|b|^2 + |d|^2)$.
Therefore,
$$
\|B_1\|_{S_2}^2 = \sum_{j\neq k} |B_{kj}|^2
\le \sum_{j \neq k} \left(\de^4 + \frac{|\eta_j-\eta_k|^4}4\right)
\left(G_1 (\eta_j-\eta_k) + G_2 (\eta_j-\eta_k)\right).
$$
Now, integrate this inequality over all $H \in \left(S_{\sqrt{\la-\de^2}}^{d-2}\right)^N$:
\begin{eqnarray}
\nonumber
\int_{S_{\sqrt{\la-\de^2}}^{d-2}} \dots \int_{S_{\sqrt{\la-\de^2}}^{d-2}} 
\|B_1 (H)\|_{S_2}^2 dS (\eta_1) \dots dS (\eta_N) \\
\label{310}
\le N (N-1) \left(\mes_{d-2} S_{\sqrt{\la-\de^2}}^{d-2}\right)^{N-2} \times \\
\times \int_{S_{\sqrt{\la-\de^2}}^{d-2}} \int_{S_{\sqrt{\la-\de^2}}^{d-2}}
\left(\de^4 + \frac{|\eta-\ze|^4}4\right)
\left(G_1 (\eta-\ze) + G_2 (\eta-\ze)\right) dS (\eta) dS(\ze).
\nonumber
\end{eqnarray}

\subsection{Estimates of integrals}
The following identity is proved in \cite[Lemma 5.2]{FilSaf}.

\begin{lemma}
Let $m \ge 2$, $R>0$, $f \in C(\R^n)$.
Then
\begin{eqnarray*}
\int_{S_R^{m-1}} \int_{S_R^{m-1}} f (\eta-\ze) dS (\eta) dS (\ze) 
= (m-1)\, \oo_{m-1} R^2 \int_{B_{2R}^m} f(\te) \left(R^2 - \frac{\te^2}4\right)^{\frac{m-3}2}
\frac{d\te}{|\te|} .
\end{eqnarray*}
\end{lemma}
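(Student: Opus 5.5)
The plan is to compute the push-forward of the product measure $dS(\eta)\,dS(\ze)$ on $S_R^{m-1}\times S_R^{m-1}$ under the map $(\eta,\ze)\mapsto\te=\eta-\ze$, and to show that it has density
$$
(m-1)\,\oo_{m-1}R^2\left(R^2-\frac{|\te|^2}4\right)^{\frac{m-3}2}|\te|^{-1}
$$
on $B_{2R}^m$. (The statement has $f\in C(\R^n)$; I read $n=m$.)

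\textbf{Step 1: reduce to a radial density.} The first observation is rotation invariance. For every orthogonal $U$, the map $(\eta,\ze)\mapsto(U\eta,U\ze)$ preserves $dS\times dS$ and sends $\eta-\ze$ to $U(\eta-\ze)$. Hence the push-forward measure $\mu$ is a rotation-invariant finite measure on $\overline{B_{2R}^m}$. Such a measure is determined by the distribution of $r=|\te|$, and it can be written as $g(|\te|)\,d\te$, where
$$
g(r)\,|S^{m-1}|\,r^{m-1}\,dr
$$
is the law of $|\eta-\ze|$. The two extreme values $r=0$ and $r=2R$ carry no mass. So it suffices to compute the distribution of $|\eta-\ze|$ under $dS\times dS$.

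\textbf{Step 2: compute the radial law for fixed $\ze$.} Fix $\ze$; the outer integral over $\ze$ will only contribute the factor $|S^{m-1}|R^{m-1}$. Write $\eta$ in polar coordinates about the axis $\ze/R$ with angle $\varphi\in[0,\pi]$. Then
$$
dS(\eta)=|S^{m-2}|\,R^{m-1}\sin^{m-2}\varphi\,d\varphi
$$
after integrating out the $(m-2)$-sphere of directions, and $|\eta-\ze|=r=2R\sin(\varphi/2)$. Change variables from $\varphi$ to $r$:
$$
dr=R\cos(\varphi/2)\,d\varphi=\sqrt{R^2-r^2/4}\;d\varphi,
\qquad
\sin\varphi=2\sin\frac{\varphi}{2}\cos\frac{\varphi}{2}=\frac{r}{R^2}\sqrt{R^2-r^2/4}.
$$
Substituting these, the law of $r$ is
$$
|S^{m-1}|\,|S^{m-2}|\,R^{2m-2}\left(\frac{r}{R^2}\right)^{m-2}\left(R^2-\frac{r^2}4\right)^{\frac{m-3}2}dr .
$$

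\textbf{Step 3: read off $g$ and the constant.} Divide the law by $|S^{m-1}|r^{m-1}$ and use $|S^{m-2}|=(m-1)\oo_{m-1}$. This gives
$$
g(r)=(m-1)\,\oo_{m-1}\,R^{2}\,r^{-1}\left(R^2-\frac{r^2}4\right)^{\frac{m-3}2},
$$
which is exactly the claimed kernel. As a check, both sides scale like $R^{2m-2}$.

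For continuous $f$ the identity $\int f\,d\mu=\int f g\,d\te$ then follows by definition of the push-forward. The integrand on the right is integrable: $r^{-1}$ is integrable in dimension $m\ge2$, and when $m=2$ the factor $(R^2-r^2/4)^{-1/2}$ is integrable near $r=2R$.

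I expect the main obstacle to be Step 1, namely justifying that a rotation-invariant measure equals the radial density obtained from its distance law. One can sidestep it by averaging $f$ over $SO(m)$: both sides are unchanged when $f$ is replaced by its spherical average, so it suffices to treat radial $f$. For radial $f$, Steps 2 and 3 are a direct computation. The remaining work is routine bookkeeping of the powers of $R$.
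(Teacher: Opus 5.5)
Your proposal is correct and gives a complete proof. The paper does not prove this identity itself; it quotes it from \cite[Lemma 5.2]{FilSaf}. So there is no in-paper argument to compare your route with.

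With $r=2R\sin(\varphi/2)$ you get
$$\sin^{m-2}\varphi\,d\varphi=\Bigl(\frac{r}{R^2}\Bigr)^{m-2}\Bigl(R^2-\frac{r^2}{4}\Bigr)^{\frac{m-3}{2}}dr .$$
Hence the law of $|\eta-\ze|$ is
$$|S^{m-1}|\,|S^{m-2}|\,R^2\,r^{m-2}\Bigl(R^2-\frac{r^2}{4}\Bigr)^{\frac{m-3}{2}}dr .$$
Dividing by $|S^{m-1}|\,r^{m-1}$ and using $|S^{m-2}|=(m-1)\oo_{m-1}$ gives exactly the kernel in the statement.

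Reducing to radial $f$ by averaging over $SO(m)$ is the right way to make Step 1 rigorous. It uses only three facts:
\begin{itemize}
\item $dS\times dS$ is invariant under $(\eta,\ze)\mapsto(U\eta,U\ze)$.
\item The right-hand kernel and Lebesgue measure are rotation invariant.
\item $SO(m)$ acts transitively on spheres when $m\ge 2$.
\end{itemize}
The diagonal $\{\eta=\ze\}$ and the antipodal set $\{\eta=-\ze\}$ have product measure zero, so no mass is lost at $r=0$ or $r=2R$. The substitution $\varphi\mapsto r$ is a monotone $C^1$ bijection $[0,\pi]\to[0,2R]$, so it is legitimate.

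As an independent check of the constant, take $f\equiv 1$. Both sides should equal $(m\oo_m)^2R^{2m-2}$. After the substitution $r=2Rt$ and a Beta integral, this reduces to Legendre's duplication formula
$$2^{m-2}\,\G\Bigl(\frac{m-1}{2}\Bigr)\G\Bigl(\frac{m}{2}\Bigr)=\sqrt{\pi}\,\G(m-1),$$
which holds.
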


This Lemma yields the equality
\begin{eqnarray}
\label{311}
\int_{S_{\sqrt{\la-\de^2}}^{d-2}} \int_{S_{\sqrt{\la-\de^2}}^{d-2}}
\left(\de^4 + \frac{|\eta-\ze|^4}4\right)
\left(G_1 (\eta-\ze) + G_2 (\eta-\ze)\right) dS (\eta) dS(\ze) \\
= (d-2) \oo_{d-2} (\la - \de^2) 
\int_{B_{2\sqrt{\la-\de^2}}^{d-1}} \left(\de^4 + \frac{|\te|^4}4\right)
\left(G_1(\te) + G_2(\te)\right) \left(\la - \de^2 - \frac{\te^2}4\right)^{\frac{d-4}2} \frac{d\te}{|\te|} .
\nonumber
\end{eqnarray}

\begin{lemma}
\label{l311}
The estimates
$$
\int_{\R^{d-1}} \left|\int_\O \cos(2x_1\de) e^{i\<y,\te\>} dx\right|^2 d\te
\le (2\pi)^{d-1} w(\O) \mes_d \O,
$$
$$
\int_{\R^{d-1}} \left|\int_\O \cos^2 (x_1\de) e^{i\<y,\te\>} dx\right|^2 d\te
\le (2\pi)^{d-1} w(\O) \mes_d \O
$$
hold true.
\end{lemma}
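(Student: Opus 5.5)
The plan is to apply Plancherel in the variable $y \in \R^{d-1}$ for each fixed $x_1$, and then use Cauchy--Schwarz in $x_1$ over an interval of length at most $w(\O)$.

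First I fix notation. For $x_1 \in [-w/2, w/2]$ let $\O(x_1) = \{y \in \R^{d-1} : (x_1,y)\in\O\}$ be the section of $\O$. Let $g(x_1)$ denote either $\cos(2x_1\de)$ or $\cos^2(x_1\de)$; in both cases $|g|\le 1$. Then
$$
\int_\O g(x_1) e^{i\<y,\te\>}dx = \int_{-w/2}^{w/2} g(x_1)\, \widehat{\chi}_{x_1}(\te)\, dx_1,
\qquad
\widehat{\chi}_{x_1}(\te) := \int_{\O(x_1)} e^{i\<y,\te\>} dy .
$$

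Next I bound the modulus squared pointwise in $\te$. Cauchy--Schwarz in $x_1$ over an interval of length $w$ gives
$$
\left|\int_{-w/2}^{w/2} g(x_1)\widehat{\chi}_{x_1}(\te)\,dx_1\right|^2 \le w \int_{-w/2}^{w/2} |g(x_1)|^2 |\widehat{\chi}_{x_1}(\te)|^2 dx_1 \le w \int_{-w/2}^{w/2} |\widehat{\chi}_{x_1}(\te)|^2 dx_1 .
$$

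I then integrate over $\te\in\R^{d-1}$ and swap the order of integration by Tonelli. Plancherel with the convention $\int|\widehat f|^2 d\te = (2\pi)^{d-1}\int|f|^2dy$ gives $\int_{\R^{d-1}}|\widehat{\chi}_{x_1}(\te)|^2 d\te = (2\pi)^{d-1}\mes_{d-1}\O(x_1)$. Integrating in $x_1$ and using Fubini, $\int \mes_{d-1}\O(x_1)\,dx_1=\mes_d\O$, so both integrals are bounded by $(2\pi)^{d-1} w(\O)\mes_d\O$.

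There is no real obstacle here. The only points to check are that the sections $\O(x_1)$ are measurable and of finite measure, which holds since $\O$ is bounded, and that the constant $(2\pi)^{d-1}$ matches the Fourier convention with no normalisation factor in front of the exponential. The argument uses only the assumption that $\O$ lies in the layer $|x_1|\le w/2$, and never convexity.
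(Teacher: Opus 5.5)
Your proposal is correct and is essentially the paper's argument: slice $\O$ in $x_1$, apply Plancherel on each section $\O'(x_1)$, and use Cauchy--Schwarz over the interval $|x_1|\le w/2$. The only difference is ordering. The paper first applies Minkowski's integral inequality in $L_2(\R^{d-1})$ and then Cauchy--Schwarz to $\int (\mes_{d-1}\O'(x_1))^{1/2}dx_1$, whereas you apply Cauchy--Schwarz pointwise in $\te$ and then Tonelli; both give the same constant $(2\pi)^{d-1}w(\O)\mes_d\O$.
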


\begin{proof}
Given $x_1$ denote by $\O' (x_1)$ a cross-section of the domain $\O$ 
with the corresponding hyperplane,
$$
\O' (x_1) = \left\{y \in \R^{d-1} : (x_1, y) \in \O\right\}.
$$
Then
$$
\left\|\int_{\O'(x_1)} e^{i\<y,\te\>} dy\right\|_{L_2(\R^{d-1})}^2 
= \int_{\R^{d-1}} \left|\int_{\O'(x_1)} e^{i\<y,\te\>} dy\right|^2 d\te
= (2\pi)^{d-1} \mes_{d-1} \O' (x_1),
$$
as the Fourier transform preserves $L_2$-norm.
Therefore,
\begin{eqnarray*}
\left\|\int_\O \cos (2x_1\de) e^{i\<y,\te\>} dx\right\|_{L_2(\R^{d-1})}
= \left\|\int_{-w/2}^{w/2} \cos (2x_1\de) dx_1 
\int_{\O'(x_1)} e^{i\<y,\te\>} dy\right\|_{L_2(\R^{d-1})} \\
\le \int_{-w/2}^{w/2} dx_1 \left\|\int_{\O'(x_1)} e^{i\<y,\te\>} dy\right\|_{L_2(\R^{d-1})} 
= \int_{-w/2}^{w/2} dx_1 (2\pi)^{\frac{d-1}2} \left(\mes_{d-1} \O' (x_1)\right)^{1/2} \\
\le (2\pi)^{\frac{d-1}2} \sqrt{w} \left(\int_{-w/2}^{w/2} dx_1 \mes_{d-1} \O' (x_1)\right)^{1/2}
= (2\pi)^{\frac{d-1}2} \sqrt{w} \left(\mes_d \O\right)^{1/2}.
\end{eqnarray*}
The proof of the second inequality is similar.
\end{proof}

\begin{lemma}
\label{l312}
Let $m \ge 2$, $a,b \ge 0$, $G \in C(\R^m)$,
$$
0 \le G(\te) \le a \quad \forall \ \te \in \R^m, 
\qquad \int_{\R^m} G(\te) \, d\te \le b.
$$
Then
$$
\int_{\R^m} \frac{G(\te)\, d\te}{|\te|} \le 
\frac{m}{m-1} \left(\oo_m a b^{m-1}\right)^{1/m}.
$$
\end{lemma}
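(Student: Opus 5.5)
The idea is a rearrangement argument: among all $G$ with $0\le G\le a$ and $\int G\le b$, the integral $\int G(\te)|\te|^{-1}d\te$ is largest when the mass of $G$ is packed as close to the origin as possible. The extremal function is $a\,\1_{B_r}$, where $r$ is chosen so that $a\,\oo_m r^m = b$. Two trivial cases come first. If $a=0$ or $b=0$ then $G\equiv 0$ and there is nothing to prove. Otherwise we fix $r>0$ by $\oo_m r^m = b/a$.

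Next we split the integral into the part over $B_r$ and the part over its complement. On $B_r$ we use $G\le a$ and integrate in polar coordinates, using $\mes_{m-1} S^{m-1} = m\oo_m$:
$$
\int_{B_r} \frac{G(\te)\,d\te}{|\te|} \le a\, m\oo_m \int_0^r \rho^{m-2}\,d\rho = \frac{m}{m-1}\, a\oo_m r^{m-1}.
$$
This step needs $m\ge 2$. Outside $B_r$ we use $|\te|^{-1}\le r^{-1}$, which gives a bound $r^{-1}\int_{\R^m\setminus B_r} G$.

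To reach the sharp constant $\frac{m}{m-1}$, the two pieces must be combined rather than bounded separately. We write $\int_{\R^m\setminus B_r} G \le b - \int_{B_r} G$. Then
$$
\int_{\R^m}\frac{G}{|\te|} \le \int_{B_r} G(\te)\Bigl(\frac1{|\te|}-\frac1r\Bigr)d\te + \frac br .
$$
The integrand in the first term has a nonnegative weight, so we may bound $G$ by $a$ there. Computing,
$$
a\, m\oo_m\int_0^r\Bigl(\rho^{m-2}-\frac{\rho^{m-1}}r\Bigr)d\rho = a\oo_m r^{m-1}\Bigl(\frac m{m-1}-1\Bigr) = \frac{a\oo_m r^{m-1}}{m-1}.
$$
Since $b/r = a\oo_m r^{m-1}$, the total is $\frac{m}{m-1}\,a\oo_m r^{m-1}$.

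Finally we substitute $r=(b/(a\oo_m))^{1/m}$:
$$
a\oo_m r^{m-1} = a\oo_m\Bigl(\frac b{a\oo_m}\Bigr)^{(m-1)/m} = \bigl(\oo_m a\, b^{m-1}\bigr)^{1/m}.
$$
This is exactly the claimed bound. The only delicate point is the one above: bounding the two pieces naively gives the weaker constant $\frac{m}{m-1}+1$. The fix is the rearrangement trick of subtracting $1/r$ inside $B_r$.
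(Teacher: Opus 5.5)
Your proof is correct and reaches the sharp constant, but by a different route from the paper's.

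\textbf{The paper's route.} It works with the tail function $h(r)=\int_{|\te|\ge r}G(\te)\,|\te|^{-1}\,d\te$. Integrating by parts (using $rh(r)\to 0$) it shows $\int_0^\infty h(r)\,dr=\int_{\R^m}G\le b$. It then differentiates, $h'(r)=-\frac1r\int_{|\te|=r}G\,dS\ge -m\oo_m a\,r^{m-2}$, which gives $h(r)\ge h(0)-\frac{m\oo_m a}{m-1}r^{m-1}$. Integrating this lower bound up to its zero $r_0$ yields $b\ge \frac{m-1}{m}r_0h(0)$, and solving for $h(0)$ gives the estimate.

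\textbf{Your route.} You fix the extremal radius $a\oo_m r^m=b$ in advance. You then use the pointwise inequality $\frac1{|\te|}\le \frac1r+\bigl(\frac1{|\te|}-\frac1r\bigr)_+$, bounding $G$ by $a$ only where the weight $\bigl(\frac1{|\te|}-\frac1r\bigr)_+$ is positive.

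\textbf{Comparison.} Your bathtub argument is shorter. It gives finiteness of $\int G/|\te|$ as a by-product, whereas the paper asserts convergence before starting. It needs no differentiation of $h$, so continuity of $G$ is irrelevant and measurability suffices. It also exhibits the extremizer (the indicator of $B_r$ times $a$, as in the paper's subsequent Remark) inside the proof itself. The paper's argument is essentially the dual formulation of the same optimization: it lower-bounds $\int_0^\infty h$ given $h(0)$. It yields the same bound and buys nothing extra for this lemma.
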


\begin{proof}
Let us consider the function
$$
h(r) = \int_{|\te| \ge r} \frac{G(\te)\, d\te}{|\te|}.
$$
Clearly, the integral $\int_{\R^m} \frac{G(\te)\, d\te}{|\te|}$ converges,
thus, the function $h$ is well defined, non-negative and continuous on $[0,\infty)$.
Moreover,
$$
r h(r) \le \int_{|\te| \ge r} G(\te)\, d\te, \quad \text{so} \quad
r h(r) \mathop{\longrightarrow}\limits_{r \to \infty} 0.
$$
Next,
\begin{equation}
\label{312}
h'(r) = - \frac1r \int_{|\te|=r} G(\te) \, dS(\te),
\end{equation}
therefore,
$$
\int_{\R^m} G(\te) \, d\te = \int_0^\infty dr \int_{|\te|=r} G(\te) \, dS(\te)
= - \int_0^\infty r h'(r)\, dr = \int_0^\infty h(r) \, dr,
$$
and
\begin{equation}
\label{313}
\int_0^\infty h(r) \, dr \le b.
\end{equation}
Furthermore, \eqref{312} implies
$$
h'(r) \ge - m \oo_m a r^{m-2},
$$
so,
$$
h(r) = h(0) + \int_0^r h'(t)\,dt
\ge h(0) - m \oo_m a \int_0^r t^{m-2} dt 
= h(0) - \frac{m \oo_m a}{m-1} \, r^{m-1}.
$$
Denote
$$
r_0 = \left(\frac{(m-1) h(0)}{m \oo_m a}\right)^{\frac1{m-1}}.
$$
Then by virtue of \eqref{313}
\begin{eqnarray*}
b \ge \int_0^\infty h(r)\, dr 
\ge \int_0^{r_0} \left(h(0) - \frac{m \oo_m a}{m-1} \, r^{m-1}\right) dr \\
= h(0) r_0 - \frac{\oo_m a r_0^m}{m-1}
= \frac{m-1}m \, r_0 h(0)
= \left(\frac{(m-1) h(0)}m\right)^{\frac{m}{m-1}} \frac1{(\oo_m a)^{\frac1{m-1}}},
\end{eqnarray*}
and therefore,
$$
\left(\oo_m a b^{m-1}\right)^{1/m} \ge \frac{m-1}m \, h(0).
\qquad \qedhere
$$
\end{proof}

\begin{rem}
The constant in this estimate is sharp.
Indeed, if we consider the function
$$
G(\te) = \begin{cases}
1, \quad &|\te| < 1, \\
0, \quad &|\te| \ge 1,
\end{cases}
$$
then
$$
a = 1, \quad b = \oo_m \quad \text{and} \quad
\int_{\R^m} \frac{G(\te)\, d\te}{|\te|} = \frac{m \oo_m}{m-1}.
$$
\end{rem}

\begin{cor}
\label{c314}
Functions $G_1$, $G_2$ defined in \eqref{39} satisfy the estimates
$$
\int_{\R^{d-1}} \frac{G_j(\te)\, d\te}{|\te|} \le 
\frac{(d-1) (2\pi)^{d-2}}{d-2} 
\left(\oo_{d-1} w(\O)^{d-2} \left(\mes_d\O\right)^d\right)^{\frac1{d-1}},
\quad j = 1, 2.
$$
\end{cor}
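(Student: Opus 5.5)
The plan is to apply Lemma \ref{l312} with $m = d-1$ to $G = G_j$, $j=1,2$. This needs three inputs: a pointwise bound $a$, an integral bound $b$, and continuity. Then we substitute these into the right-hand side of Lemma \ref{l312}.

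\emph{Continuity and nonnegativity.} Each $G_j(\te)$ is the squared modulus of the Fourier transform, in the $y$ variable, of an integrable function on the bounded set $\O$. Hence $G_j$ is continuous and nonnegative on $\R^{d-1}$.

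\emph{Pointwise bound.} Since $|\cos(2x_1\de)| \le 1$, $0 \le \cos^2(x_1\de) \le 1$ and $|e^{i\<y,\te\>}|=1$, we get
$$
G_j(\te) \le \left(\mes_d \O\right)^2 =: a
$$
for all $\te$.

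\emph{Integral bound.} Lemma \ref{l311} gives directly
$$
\int_{\R^{d-1}} G_j(\te)\,d\te \le (2\pi)^{d-1} w(\O) \mes_d \O =: b .
$$

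\emph{Substitution.} Lemma \ref{l312} with $m=d-1\ge 2$ (we assume $d\ge 3$, since the exponent $\frac1{d-2}$ appearing in the target requires it) yields
$$
\int_{\R^{d-1}} \frac{G_j(\te)\,d\te}{|\te|} \le \frac{d-1}{d-2}\left(\oo_{d-1} (\mes_d\O)^2 (2\pi)^{(d-1)(d-2)} w(\O)^{d-2} (\mes_d\O)^{d-2}\right)^{\frac1{d-1}} .
$$
Pulling $(2\pi)^{(d-1)(d-2)}$ out of the $\frac1{d-1}$-th root gives the factor $(2\pi)^{d-2}$. The powers of $\mes_d\O$ combine to $(\mes_d\O)^{d}$. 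This is exactly the claimed bound.

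There is no real obstacle here; the only point needing care is the exponent bookkeeping in the substitution step.
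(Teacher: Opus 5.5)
Your proposal is correct and is the paper's argument: apply Lemma \ref{l312} with $m=d-1$, $a=(\mes_d\O)^2$ and $b=(2\pi)^{d-1}w(\O)\mes_d\O$ (from Lemma \ref{l311}), and your exponent bookkeeping checks out. One trivial wording slip: what forces $d\ge 3$ is the hypothesis $m\ge 2$ of Lemma \ref{l312} together with the prefactor $\frac{d-1}{d-2}$, not an exponent $\frac1{d-2}$, which does not appear in the statement.
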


\begin{proof}
It is clear by definition that
$$
0 \le G_j (\te) \le (\mes_d\O)^2, \quad j = 1, 2.
$$
So, the functions $G_1$, $G_2$ satisfy the assumptions of Lemma \ref{l312}
with 
$$
m=d-1, \quad a = (\mes_d\O)^2 \quad \text{and}
\quad b =  (2\pi)^{d-1} w(\O) \mes_d \O
$$ 
due to Lemma \ref{l311}.
\end{proof}

\begin{lemma}
\label{l315}
Let $d\ge 5$.
Then
$$
x^2 (1-x)^{\frac{d-4}2} \le \frac{16 (d-4)^{\frac{d-4}2}}{d^{\frac{d}2}},
\qquad x \in [0,1].
$$
\end{lemma}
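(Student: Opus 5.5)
We maximize $g(x) = x^2 (1-x)^{\frac{d-4}2}$ on $[0,1]$ by elementary calculus. Since $d \ge 5$, the exponent $\frac{d-4}2$ is positive. Hence $g$ is continuous on $[0,1]$, vanishes at both endpoints, and is positive inside, so its maximum is attained at an interior critical point.

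Differentiating $\log g(x) = 2\log x + \frac{d-4}2 \log(1-x)$ gives
$$
\frac{2}{x} - \frac{d-4}{2(1-x)} = 0 \quad \Longleftrightarrow \quad 4(1-x) = (d-4)x \quad \Longleftrightarrow \quad x_* = \frac4d .
$$
The derivative of $\log g$ is strictly decreasing in $x$ on $(0,1)$, so $x_*$ is the unique critical point and the global maximum.

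Substituting, we get
$$
g(x_*) = \frac{16}{d^2}\left(\frac{d-4}{d}\right)^{\frac{d-4}2} = \frac{16\,(d-4)^{\frac{d-4}2}}{d^2\, d^{\frac{d-4}2}} = \frac{16\,(d-4)^{\frac{d-4}2}}{d^{\frac d2}},
$$
which is exactly the claimed bound.

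There is no real obstacle here; the only care needed is checking $d\ge5$, so that the exponent is positive, $g(1)=0$, and $x_*=4/d<1$ lies inside the interval. In the application to \eqref{311}, one presumably uses this with $x = \frac{|\te|^2}{4(\la-\de^2)}$, to bound $|\te|^4 \left(\la-\de^2-\frac{\te^2}4\right)^{\frac{d-4}2}$ uniformly in $\te$.
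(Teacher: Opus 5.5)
Your proof is correct and follows essentially the paper's argument: $g$ vanishes at both endpoints and has the single interior critical point $x=4/d$, where evaluating gives the bound. You get uniqueness of that critical point from the strictly decreasing log-derivative, while the paper solves $g'(x)=0$ directly. As an aside on your closing remark, in the next corollary the paper substitutes $x=(\delta^2+\theta^2/4)/\lambda$, which handles the $\delta^4$ and $|\theta|^4$ terms together rather than only the $|\theta|^4$ term.
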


\begin{proof}
The function
$g(x): = x^2 (1-x)^{\frac{d-4}2}$
vanishes at the endpoints of the interval,
$g(0) = g(1) = 0$,
and it is positive inside.
The derivative
$$
g'(x) = 2x (1-x)^{\frac{d-4}2} - \frac{d-4}2\, x^2 (1-x)^{\frac{d-6}2}
$$
has a single root $x = \frac4d$.
Therefore,
$$
g(x) \le g\left(\frac4d\right) = \frac{16 (d-4)^{\frac{d-4}2}}{d^{\frac{d}2}}.
\qquad \qedhere
$$
\end{proof}

\begin{cor}
\label{c316}
If $\de^2 + \frac{\te^2}4 \le \la$ then
$$
\left(\de^4 + \frac{|\te|^4}4\right) \left(\la - \de^2 - \frac{\te^2}4\right)^{\frac{d-4}2}
\le \frac{16 (d-4)^{\frac{d-4}2}}{d^{\frac{d}2}}\, \la^{\frac{d}2}.
$$
\end{cor}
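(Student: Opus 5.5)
The plan is to reduce the estimate to Lemma \ref{l315} by a single substitution that scales out $\la$. Put
$$
s := \de^2 + \frac{|\te|^2}4, \qquad x := \frac{s}{\la} \in [0,1],
$$
where $x \le 1$ is exactly the hypothesis $\de^2 + \frac{\te^2}4 \le \la$. The second factor then becomes $(\la - s)^{\frac{d-4}2} = \la^{\frac{d-4}2}(1-x)^{\frac{d-4}2}$. So everything hinges on bounding the polynomial factor $\de^4 + \frac{|\te|^4}4$ by a multiple of $s^2 = \la^2 x^2$. After that, the product is at most a constant times $\la^{\frac d2}\, x^2(1-x)^{\frac{d-4}2}$, and Lemma \ref{l315} (which needs $d \ge 5$, the standing range there) gives the factor $16(d-4)^{\frac{d-4}2} d^{-\frac d2}$.

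For the comparison step, write $t = |\te|^2/4$, so that $\frac{|\te|^4}4 = 4t^2$. Then
$$
\de^4 + 4t^2 \le 4(\de^2 + t)^2 = 4 s^2 ,
$$
since expanding the right-hand side gives $4\de^4 + 8\de^2 t + 4t^2$. Combined with Lemma \ref{l315}, this yields
$$
\left(\de^4 + \frac{|\te|^4}4\right)\left(\la - \de^2 - \frac{\te^2}4\right)^{\frac{d-4}2}
\le 4\la^{\frac d2}\, x^2(1-x)^{\frac{d-4}2}
\le \frac{64\, (d-4)^{\frac{d-4}2}}{d^{\frac d2}}\, \la^{\frac d2}.
$$

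The main obstacle is the numerical constant, and I do not expect this route to reach the stated $16$. Indeed, $16$ appears to be unattainable uniformly in $\de$. Take $\de \to 0$ and choose $|\te|^2/4 = 4\la/d$, the maximizer from Lemma \ref{l315}. The left-hand side then tends to $4\cdot 16(d-4)^{\frac{d-4}2}d^{-\frac d2}\la^{\frac d2}$, which is exactly four times the claimed bound. In the application $\de = 1/w(\O)$ is very small compared with $\sqrt\la$, so this regime is the relevant one.

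Before committing, I would therefore check whether the intended weight is $\frac{|\te|^4}{16}$ rather than $\frac{|\te|^4}4$. With $\frac{|\te|^4}{16}$ one has $\de^4 + \frac{|\te|^4}{16} \le s^2$ directly and the constant $16$ comes out. If the weight is indeed $\frac{|\te|^4}4$, I would prove the corollary with $64$ in place of $16$. This loses only an absolute factor $4$, which is harmless for the exponential-in-$d$ conclusion of Theorem \ref{t335}, since it only changes the absolute constant $C_{conv}$.
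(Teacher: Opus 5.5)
Your proposal is correct, and it follows the paper's own route: bound $\delta^4+\frac{|\theta|^4}{4}$ by a multiple of $\bigl(\delta^2+\frac{|\theta|^2}{4}\bigr)^2$, set $x=\bigl(\delta^2+\frac{|\theta|^2}{4}\bigr)/\lambda$, and apply Lemma \ref{l315}.

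The difference is in the constant of the first step. The paper asserts ``clearly'' that $\delta^4+\frac{|\theta|^4}{4}\le\bigl(\delta^2+\frac{|\theta|^2}{4}\bigr)^2$. Expanding shows this is equivalent to $3|\theta|^2\le 8\delta^2$ (for $\theta\neq0$). In the application $\delta^2/\lambda=O(d^{-3})$ while $|\theta|$ ranges up to $2\sqrt{\lambda-\delta^2}$, so this fails on almost all of the domain of integration. Your inequality $\delta^4+\frac{|\theta|^4}{4}\le 4\bigl(\delta^2+\frac{|\theta|^2}{4}\bigr)^2$ is the correct replacement. Your example ($\delta\to0$, $\frac{|\theta|^2}{4}=\frac{4\lambda}{d}$) is valid: it shows the corollary as stated is false, and that your constant $64$ is sharp in that limit.

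The weight $\frac{|\theta|^4}{4}$ is genuine. It is the square of the coefficient $\frac{|\eta_j-\eta_k|^2}{2}$ from Lemma \ref{l37}, used in the Cauchy--Schwarz bound for $|B_{kj}|^2$. So the $\frac{|\theta|^4}{16}$ reading is not available, and your fallback is the right fix.

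With $64$ in place of $16$:
\begin{itemize}
\item the factor $32$ in \eqref{314} becomes $128$;
\item $2^{d+7}$ in \eqref{320} becomes $2^{d+9}$;
\item the asymptotics $(1-\log 2)d+|a_1|2^{1/3}d^{1/3}+O(\log d)$ are unchanged.
\end{itemize}
Hence Theorem \ref{t335} survives with only a smaller $C_{conv}$.
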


\begin{proof}
Clearly,
$$
\left(\de^4 + \frac{|\te|^4}4\right) \left(\la - \de^2 - \frac{\te^2}4\right)^{\frac{d-4}2}
\le \left(\de^2 + \frac{\te^2}4\right)^2 \left(\la - \de^2 - \frac{\te^2}4\right)^{\frac{d-4}2}.
$$
Applying the preceding Lemma with 
$x = \frac{\de^2 + \te^2/4}\la$ to the right hand side we get the claim.
\end{proof}

Now, we are ready to estimate the integral in the right hand side of \eqref{311}:
\begin{eqnarray}
\nonumber
\int_{B_{2\sqrt{\la-\de^2}}^{d-1}} \left(\de^4 + \frac{|\te|^4}4\right)
\left(G_1(\te) + G_2(\te)\right) \left(\la - \de^2 - \frac{\te^2}4\right)^{\frac{d-4}2} \frac{d\te}{|\te|} \\
\label{314}
\le \frac{16 (d-4)^{\frac{d-4}2}}{d^{\frac{d}2}}\, \la^{\frac{d}2}
\int_{\R^{d-1}} \left(G_1(\te) + G_2(\te)\right) \frac{d\te}{|\te|} \\
\le \frac{32 (d-4)^{\frac{d-4}2} \la^{\frac{d}2} (d-1) (2\pi)^{d-2}}{d^{\frac{d}2} (d-2)}
\left(\oo_{d-1} w(\O)^{d-2} \left(\mes_d\O\right)^d\right)^{\frac1{d-1}}.
\nonumber
\end{eqnarray}
Here we used Corollary \ref{c316} on the first step and Corollary \ref{c314} on the second step.
Relations \eqref{310}, \eqref{311} and \eqref{314} yield 
\begin{eqnarray}
\nonumber
\int_{S_{\sqrt{\la-\de^2}}^{d-2}} \dots \int_{S_{\sqrt{\la-\de^2}}^{d-2}} 
\|B_1 (H)\|_{S_2}^2 dS (\eta_1) \dots dS (\eta_N) 
\le N (N-1) \left(\mes_{d-2} S_{\sqrt{\la-\de^2}}^{d-2}\right)^{N-2} \times \\
\times 32 (d-4)^{\frac{d-4}2} (d-1) d^{-\frac{d}2} \oo_{d-2} (2\pi)^{d-2} (\la-\de^2) \la^{\frac{d}2}
\left(\oo_{d-1} w(\O)^{d-2} \left(\mes_d\O\right)^d\right)^{\frac1{d-1}}.
\label{315}
\end{eqnarray}
The function $\|B_1 (H)\|_{S_2}^2$ is a continuous function of 
$H \in \left(S_{\sqrt{\la-\de^2}}^{d-2}\right)^N$.
Therefore, there is a non-empty open subset 
$$
\Sigma^{(2)} \subset \left(S_{\sqrt{\la-\de^2}}^{d-2}\right)^N
$$
where this function does not exceed the right hand side of \eqref{315}
divided by the volume $\left(\mes_{d-2} S_{\sqrt{\la-\de^2}}^{d-2}\right)^N$.
Taking into account that
$$
\mes_{d-2} S_{\sqrt{\la-\de^2}}^{d-2}
= (d-1) \oo_{d-1} (\la-\de^2)^{\frac{d-2}2},
$$
we obtain for $H \in \Sigma^{(2)}$
\begin{eqnarray}
\nonumber
\|B_1 (H)\|_{S_2}^2 
\le \frac{N (N-1)}{\left(\mes_{d-2} S_{\sqrt{\la-\de^2}}^{d-2}\right)^2} \times \\
\label{317}
\times 32 (d-4)^{\frac{d-4}2} (d-1) d^{-\frac{d}2} \oo_{d-2} (2\pi)^{d-2} (\la-\de^2) \la^{\frac{d}2}
\left(\oo_{d-1} w(\O)^{d-2} \left(\mes_d\O\right)^d\right)^{\frac1{d-1}} \\
= \frac{32 N (N-1) (d-4)^{\frac{d-4}2} \oo_{d-2} (2\pi)^{d-2} \la^{\frac{d}2}}
{(d-1) d^{\frac{d}2} \oo_{d-1}^2 (\la-\de^2)^{d-3}}  
\left(\oo_{d-1} w(\O)^{d-2} \left(\mes_d\O\right)^d\right)^{\frac1{d-1}} .
\nonumber
\end{eqnarray}
The set $\Sigma^{(2)}$ is open, and by Lemma \ref{l39} 
the set $\Sigma^{(1)}$ is everywhere dense.
Therefore, 
$$
\Sigma^{(1)} \cap \Sigma^{(2)} \neq \emptyset.
$$

We proved the following

\begin{lemma}
\label{l317}
For any natural $N$ there is $H = H^{(N)} \in \left(S_{\sqrt{\la-\de^2}}^{d-2}\right)^N$
such that the relations
\eqref{375} and \eqref{317} are fulfilled.
\end{lemma}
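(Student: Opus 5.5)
The lemma is essentially the conclusion of the preceding construction, and the plan is to check carefully that the two sets $\Sigma^{(1)}$ and $\Sigma^{(2)}$ are as claimed and then intersect them. Fix $N$ and $\la$, and write $S := S_{\sqrt{\la-\de^2}}^{d-2}$. The product $S^N$ is a compact manifold carrying the product surface measure, of total mass $\left(\mes_{d-2} S\right)^N$.

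\textbf{Step 1 (the set $\Sigma^{(2)}$).} The entries $B_{kj}(H)$ are integrals over the bounded set $\O$ of functions depending smoothly on $(\eta_j,\eta_k)$. By dominated convergence, $H\mapsto \|B_1(H)\|_{S_2}^2$ is continuous on $S^N$. Call $M$ the average of this function over $S^N$, i.e.\ the left-hand side of \eqref{315} divided by $\left(\mes_{d-2} S\right)^N$. By \eqref{315}, $M$ is at most the right-hand side of \eqref{315} divided by the same volume, and the simplification using $\mes_{d-2} S = (d-1)\oo_{d-1}(\la-\de^2)^{\frac{d-2}2}$ turns that bound into exactly the right-hand side of \eqref{317}.

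A continuous function cannot exceed its mean everywhere. So the set where it is strictly below its mean is open, and it is nonempty unless the function is constant, in which case it equals its mean everywhere. Hence the set
\[
\Sigma^{(2)} := \left\{H : \|B_1(H)\|_{S_2}^2 < M\right\} \cup \left(\text{all of } S^N \text{ if the function is constant}\right)
\]
is a nonempty open set on which \eqref{317} holds. To avoid the case split, one can instead use the open set where the function is below $M+\varepsilon$ and then let $\varepsilon\to0$. This is the only delicate point, and the cleaner route is to work with strict inequality below the mean, which gives a nonempty open set directly.

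\textbf{Step 2 (the set $\Sigma^{(1)}$).} By Lemma \ref{l39}, $\Sigma^{(1)}$ is dense in $S^N$. Its proof uses that the family $\{u_\eta\}$ is linearly independent, so that no finite-dimensional space can contain $u_\eta$ for all $\eta$ in an open subset of $S$. If that needs justification, it follows from analyticity in $\eta$ together with the distinct characters $e^{i\<y,\eta\>}$.

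\textbf{Step 3 (conclusion).} A dense set meets every nonempty open set, so $\Sigma^{(1)}\cap\Sigma^{(2)}\neq\emptyset$. Any $H^{(N)}$ in this intersection satisfies \eqref{375}, because it lies in $\Sigma^{(1)}$, and satisfies \eqref{317}, because it lies in $\Sigma^{(2)}$. The main obstacle is only bookkeeping: verifying that dividing \eqref{315} by $\left(\mes_{d-2} S\right)^N$ gives exactly \eqref{317}, with $N(N-1)/\left(\mes_{d-2} S\right)^2$ and the powers of $\la-\de^2$ combining to $(\la-\de^2)^{1-(d-2)} = (\la-\de^2)^{-(d-3)}$.
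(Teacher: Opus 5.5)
Your proposal is correct and follows the paper's argument: continuity of $H\mapsto\|B_1(H)\|_{S_2}^2$ together with the averaged bound \eqref{315} gives a nonempty open $\Sigma^{(2)}$ on which \eqref{317} holds, and density of $\Sigma^{(1)}$ (Lemma~\ref{l39}) forces $\Sigma^{(1)}\cap\Sigma^{(2)}\neq\emptyset$; your explicit handling of the constant case fills in a point the paper leaves implicit. Drop the aside about using the set where the function is below $M+\varepsilon$ and letting $\varepsilon\to0$: $\Sigma^{(1)}$ is only dense, not closed, so a limit of points of $\Sigma^{(1)}$ need not lie in $\Sigma^{(1)}$, and that route does not by itself produce the required $H$.
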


\subsection{Positive spectra of matrices of special kind}
Given a Hermitian matrix $B \in \mat (\C, N \times N)$
denote by $N_+ (B)$ the number of its positive eigenvalues,
$N_+(B) = \# \left(\spec (B) \cap (0, \infty)\right)$.
The number $N_+ (B)$ coincides with the maximal dimension
of a subspace of $\C^N$ where the quadratic form of the matrix $B$ is positive.

\begin{lemma}
\label{l318}
Let 
$$
B = B^* \in \mat (\C, N \times N), \qquad B = B_0 + B_1,
$$
where $B_0$ is a diagonal part of $B$ and $B_1$ is an off-diagonal part of $B$.
Assume that
\begin{equation}
\label{318}
B_0 \ge a I_{N \times N} \qquad \text{and} \qquad
\|B_1\|_{S_2}^2 \le b (N^2-N),
\end{equation}
with some positive $a$ and $b$.
Then
$$
N_+ (B) \ge N - \frac{b(N^2-N)}{a^2}.
$$
\end{lemma}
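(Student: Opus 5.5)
The idea is to count the eigenvalues of $B_1$ lying at or below $-a$, and to show that removing them leaves a subspace on which $B$ is positive definite.

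Let $\nu_1,\dots,\nu_N$ be the eigenvalues of the Hermitian matrix $B_1$, listed with multiplicity and with orthonormal eigenvectors $e_1,\dots,e_N$. Put
$$
M := \#\{j : \nu_j \le -a\}.
$$
Since $\nu_j^2 \ge a^2$ for each of these $M$ indices, we get
$$
M a^2 \le \sum_{j=1}^N \nu_j^2 = \|B_1\|_{S_2}^2 \le b(N^2-N),
$$
and hence $M \le b(N^2-N)/a^2$.

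Now let $L$ be the span of the eigenvectors $e_j$ with $\nu_j > -a$. Its dimension is $N - M$. For every nonzero $c \in L$,
$$
(B_1 c, c) > -a\|c\|^2,
$$
because $c$ is a nonzero combination of eigenvectors whose eigenvalues all exceed $-a$. Combining this with $B_0 \ge aI$ gives
$$
(Bc,c) = (B_0c,c) + (B_1c,c) > a\|c\|^2 - a\|c\|^2 = 0 .
$$
So the quadratic form of $B$ is positive on $L$. By the characterization of $N_+(B)$ as the maximal dimension of a subspace on which the form is positive (stated just before the lemma),
$$
N_+(B) \ge \dim L = N - M \ge N - \frac{b(N^2-N)}{a^2}.
$$

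There is no real obstacle here. The only point needing care is that the inequality $(B_1c,c) > -a\|c\|^2$ must be strict, so that the form is strictly positive on $L$. This is why $L$ is built from the eigenvalues with $\nu_j > -a$ rather than $\nu_j \ge -a$; the boundary eigenvalues $\nu_j = -a$ are counted in $M$ instead, and the bound $Ma^2 \le \|B_1\|_{S_2}^2$ still holds for them.
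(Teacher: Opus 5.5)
Your proof is correct and follows the same route as the paper. You bound the number of eigenvalues of $B_1$ in $(-\infty,-a]$ by $\|B_1\|_{S_2}^2/a^2$, then use positivity of $B$ on the span of the remaining eigenvectors. The only difference is that you spell out, via the strict inequality for $\nu_j>-a$ and the variational characterization of $N_+$, the step the paper compresses into the word ``Therefore''.
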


\begin{proof}
The second condition \eqref{318} implies that
$$
\# \left(\spec (B_1) \cap (-\infty, -a]\right) \le \frac{b(N^2-N)}{a^2}.
$$
Therefore,
$$
\# \left(\spec (B) \cap (0, \infty)\right)
\ge N - \frac{b(N^2-N)}{a^2}.
\quad \qedhere
$$
\end{proof}

\begin{lemma}
\label{l319}
Fix $a, b > 0$. 
Assume that there is a sequence of matrices $B^{(N)}$ 
satisfying the conditions of Lemma \ref{l318} for each $N \in \N$.
Then one can find such a number $N$ that
$$
N_+ (B^{(N)}) \ge \frac{a^2}{4b} + \frac12.
$$
\end{lemma}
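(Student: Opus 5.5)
By Lemma \ref{l318} applied to $B^{(N)}$, for every $N$ we have
$$
N_+ (B^{(N)}) \ge g(N) := N - \frac{b}{a^2}(N^2 - N) = N\Bigl(1 + \frac{b}{a^2}\Bigr) - \frac{b}{a^2}N^2 .
$$
So it suffices to choose a natural $N$ for which $g(N) \ge \frac{a^2}{4b} + \frac12$. Put $t = b/a^2 > 0$. Then $g(x) = (1+t)x - t x^2$ is a concave parabola with maximum at $x_0 = \frac{1+t}{2t}$, where
$$
g(x_0) = \frac{(1+t)^2}{4t} = \frac{1}{4t} + \frac12 + \frac t4 .
$$

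Since $N$ must be a natural number, we take the integer nearest to $x_0$ and estimate the loss. For real $s$ we have $g(x_0 + s) = g(x_0) - t s^2$. Choose $N \in \N$ with $|N - x_0| \le \frac12$. This is possible since $x_0 > \frac12$, so $N = \lceil x_0 - \frac12 \rceil \ge 1$ works. Then
$$
g(N) \ge g(x_0) - \frac t4 = \frac1{4t} + \frac12 = \frac{a^2}{4b} + \frac12 .
$$
Together with the first display this gives the claim.

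Since the integers are spaced by one, the loss from rounding is at most $t \cdot \frac14$, and this is exactly cancelled by the $\frac t4$ term in $g(x_0)$. The remaining points are routine: $N \ge 1$ because $x_0 > \frac12$, and the hypotheses of Lemma \ref{l318} hold for this particular $N$, which is assumed in the statement.
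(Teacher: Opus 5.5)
Your proposal is correct and takes the same route as the paper. The paper also chooses $N$ as the integer nearest to $\frac{a^2+b}{2b}$ (your $x_0$) and writes $N = x_0 + \varepsilon$ with $|\varepsilon|\le\frac12$; it then notes that the rounding loss $\frac{b\varepsilon^2}{a^2}\le\frac{b}{4a^2}$ exactly cancels the $\frac{b}{4a^2}$ term in $\frac{(a^2+b)^2}{4a^2b}$. Your explicit check that $N\ge 1$ is a detail the paper leaves implicit.
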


\begin{proof}
We take as $N$ a nearest integer to the number $\frac{a^2 + b}{2b}$,
so
$$
N = \frac{a^2 + b}{2b} + \er, \qquad |\er| \le \frac12.
$$
Then we use Lemma \ref{l318}, and we have
\begin{eqnarray*}
N - \frac{b(N^2-N)}{a^2} = \frac{(a^2+b)N}{a^2} - \frac{bN^2}{a^2}
= \frac{a^2+b}{a^2} \left(\frac{a^2 + b}{2b} + \er\right) 
- \frac{b}{a^2} \left(\frac{a^2 + b}{2b} + \er\right)^2 \\
= \frac{(a^2+b)^2}{4a^2b} - \frac{b\er^2}{a^2}
\ge \frac{a^4 + 2a^2b + b^2}{4a^2b} - \frac{b}{4a^2} 
= \frac{a^2}{4b} + \frac12. \quad \qedhere
\end{eqnarray*}
\end{proof}

\subsection{Proof of Theorem \ref{t14}}
Recall that for each natural $N$ Lemma \ref{l317} guarantees the existence of a set 
$H^{(N)} \in \left(S_{\sqrt{\la-\de^2}}^{d-2}\right)^N$
such that the corresponding space $F(H^{(N)})$ possesses the following properties:
\begin{itemize}
\item $F(H^{(N)}) \subset W_2^1 (\O)$;
\item $F (H) \cap H_{\cal D} (\la) = \{0\}$;
\item $\dim F(H^{(N)}) = N$;
\item $- \D f = \la f$ for all $f \in F(H^{(N)})$;
\item if 
$f = \sum_{j=1}^N c_j u_{\eta_j} \in F(H^{(N)})$ then
\begin{equation}
\label{319}
\la \int_\O |f(x)|^2 dx - \int_\O |\n f(x)|^2 dx = (B^{(N)} \vec c, \vec c),
\end{equation}
with $B^{(N)} = B_0^{(N)} + B_1^{(N)}$, 
$B_0^{(N)}$ satisfies \eqref{38},
$B_1^{(N)}$ satisfies \eqref{317}.
\end{itemize}
We apply Lemma \ref{l319} with
$$
a = \frac{\mes_d \O}{2 w(\O)^2}, \quad
b = \frac{32 (d-4)^{\frac{d-4}2} \oo_{d-2} (2\pi)^{d-2} \la^{\frac{d}2}}
{(d-1) d^{\frac{d}2} \oo_{d-1}^2 (\la-\de^2)^{d-3}}  
\left(\oo_{d-1} w(\O)^{d-2} \left(\mes_d\O\right)^d\right)^{\frac1{d-1}} .
$$
It means that we can find a number $N$, the set 
$H^{(N)} \in \left(S_{\sqrt{\la-\de^2}}^{d-2}\right)^N$,
and a subspace $F_* \subset F(H^{(N)})$ such that
the quadratic form \eqref{319} is positive on this subspace $F_*$, and
\begin{eqnarray*}
\dim F_* > \frac{a^2}{4b} 
= \frac{(\mes_d \O)^2}{16 w(\O)^4} \cdot
\frac{(d-1) d^{\frac{d}2} \oo_{d-1}^2 (\la-\de^2)^{d-3}}
{32 (d-4)^{\frac{d-4}2} \oo_{d-2} (2\pi)^{d-2} \la^{\frac{d}2}
\left(\oo_{d-1} w(\O)^{d-2} \left(\mes_d\O\right)^d\right)^{\frac1{d-1}}} \\
\ge \frac{d^3 (\la-\de^2)^{d-3} \oo_{d-1}^{2 - \frac1{d-1}} (\mes_d \O)^{\frac{d-2}{d-1}}}
{512 \la^{\frac{d}2} (2\pi)^{d-2} \oo_{d-2} w(\O)^{5 - \frac1{d-1}}}
= \frac{d^3 (\la-\de^2)^{d-3} \G\left(\frac{d}2\right) (\mes_d \O)^{\frac{d-2}{d-1}}}
{2^{d+7} \pi^{\frac{d-3}2} \la^{\frac{d}2} 
\G \left(\frac{d+1}2\right)^{2 - \frac1{d-1}} w(\O)^{5 - \frac1{d-1}}}.
\end{eqnarray*}
This estimate together with Lemma \ref{l31} gives

\begin{lemma}
\label{l320}
Let $d\ge 5$. 
Assume that a bounded domain $\O$ is contained in a layer of width $w(\O)$.
Assume that $\la \ge \de^2$ where $\de = w(\O)^{-1}$.
Then
\begin{equation}
\label{320}
N_{\cal N} (\O, \la) - N_{\cal D} (\O, \la) \ge
\frac{d^3 (\la-\de^2)^{d-3} \G\left(\frac{d}2\right) (\mes_d \O)^{\frac{d-2}{d-1}}}
{2^{d+7} \pi^{\frac{d-3}2} \la^{\frac{d}2} 
\G \left(\frac{d+1}2\right)^{2 - \frac1{d-1}} w(\O)^{5 - \frac1{d-1}}}.
\end{equation}
\end{lemma}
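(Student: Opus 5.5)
The plan is to assemble results already established in this section, since the lemma is essentially their combination. First, for every natural $N$, Lemma \ref{l317} supplies a configuration $H^{(N)}$ of $N$ points on the sphere $S^{d-2}_{\sqrt{\la-\de^2}}$. For it the space $F(H^{(N)})$ has dimension $N$ and satisfies the transversality condition \eqref{375}. In addition, the Gram-type matrix $B^{(N)}$ of the form \eqref{319} splits as $B_0^{(N)}+B_1^{(N)}$, with $B_0^{(N)}\ge a I$ by \eqref{38} and $\|B_1^{(N)}\|_{S_2}^2\le b(N^2-N)$ by \eqref{317}.

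The constants are
$$
a=\frac{\mes_d\O}{2w(\O)^2},
$$
with $b$ the $N$-independent prefactor in \eqref{317}. The hypothesis $d\ge5$ enters through Lemma \ref{l315}, which Corollary \ref{c316} uses. The hypothesis $\la\ge\de^2$ ensures the sphere is nondegenerate.

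Next I apply Lemma \ref{l319}, which yields an $N$ with $N_+(B^{(N)})\ge \frac{a^2}{4b}+\frac12>\frac{a^2}{4b}$. Hence there is a subspace $\mathcal{C}\subset\C^N$ of that dimension on which $(B^{(N)}\vec c,\vec c)>0$. I let $F_*$ be the image of $\mathcal{C}$ under $\vec c\mapsto\sum_j c_j u_{\eta_j}$. Because $\dim F(H^{(N)})=N$ this map is injective, so $\dim F_*=\dim\mathcal{C}$.

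On $F_*$ every $f$ satisfies $-\D f=\la f$ and $\|\n f\|^2\le\la\|f\|^2$. Since $F_*\subset F(H^{(N)})$, it also satisfies $F_*\cap H_{\cal D}(\la)=\{0\}$. Lemma \ref{l31} therefore gives $N_{\cal N}(\O,\la)-N_{\cal D}(\O,\la)\ge\dim F_*>\frac{a^2}{4b}$.

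What remains is the lower bound for $a^2/(4b)$, which is an elementary simplification.

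\begin{itemize}
\item Substituting $a$ and $b$ gives the factor $\frac{1}{16\cdot 32}=\frac1{512}$.
\item The dimensional factors are handled by
$$
(d-1)\,d^{d/2}(d-4)^{-(d-4)/2}=(d-1)\,d^2\Bigl(\tfrac{d}{d-4}\Bigr)^{(d-4)/2}\ge d^3 ,
$$
since $(1+\frac4{d-4})^{(d-4)/2}\ge 1+2=3$ and $3(d-1)\ge d$.
\item The powers of $\mes_d\O$ combine as $2-\frac d{d-1}=\frac{d-2}{d-1}$.
\item The powers of $w$ combine as $4+\frac{d-2}{d-1}=5-\frac1{d-1}$.
\item For the $\oo$ factors, I write $\oo_{d-1}^{2-1/(d-1)}/\oo_{d-2}$ using $\oo_m=\pi^{m/2}/\G(m/2+1)$. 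This produces $\G(d/2)\,\G(\frac{d+1}2)^{-(2-\frac1{d-1})}$ together with a power of $\pi$.
\item Combining that power of $\pi$ with $(2\pi)^{d-2}$ and $512=2^9$ gives $2^{d+7}\pi^{(d-3)/2}$. The exponent of $\pi$ is $(d-1)(1-\frac1{2(d-1)})-\frac{d-2}2+(d-2)=\ldots$; I will simply track it carefully.
\end{itemize}

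The only real obstacle is this bookkeeping of exponents of $\pi$ and the $\G$-factors. If the $\pi$ exponent comes out slightly better than $(d-3)/2$, I will weaken it using $\pi\ge1$. Likewise, any surplus constant can be absorbed by weakening, since \eqref{320} is only claimed as a lower bound.
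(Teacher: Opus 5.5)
Your proposal is correct and is essentially the paper's argument: Lemma \ref{l317} and Lemma \ref{l319} with the same $a$ and $b$, the positive subspace $F_*$, Lemma \ref{l31}, and then the same simplification of $a^2/(4b)$; the $\pi$-exponent comes out exactly $-(d-3)/2$ (the factor $(2\pi)^{d-2}$ contributes $-(d-2)$, not $+(d-2)$), so no weakening is needed. One small fix: Bernoulli's inequality does not give $(1+\frac4{d-4})^{(d-4)/2}\ge 3$ for $d=5$, where the exponent is $1/2$ and the value is $\sqrt5$, but $4\sqrt5\ge 5$, so $(d-1)d^{d/2}(d-4)^{-(d-4)/2}\ge d^3$ still holds.
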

 
{\it Proof of Theorems \ref{t335} and \ref{t14}.}
By virtue of \eqref{15} it is sufficient to estimate the right hand side of \eqref{320}
from below for $\la \ge \la_1 (\O)$.
By the Faber-Krahn inequality
$$
\la_1 (\O) \ge \frac{j_{\frac{d}2-1}^2 \oo_d^{2/d}}{(\mes_d \O)^{2/d}},
$$
thus, in the rest of the proof we assume
\begin{equation}
\label{321}
\la \ge \frac{j_{\frac{d}2-1}^2 \oo_d^{2/d}}{(\mes_d \O)^{2/d}}.
\end{equation}

Let us find the asymptotics of the right hand side of \eqref{320}
if $d \to \infty$ up to terms of order $O (\log d)$.
Recall that 
$$
\de = \frac1{w(\O)} = \frac1{\left(d! \mes_d \O\right)^{1/d}},
$$
so,
$$
\frac{\de^2}\la \le \frac{(\mes_d \O)^{2/d}}
{\left(d! \mes_d \O\right)^{2/d} j_{\frac{d}2-1}^2 \oo_d^{2/d}}
= \frac{\G\left(\frac{d}2 + 1\right)^{2/d}}{\pi (d!)^{\frac2d} \,j_{\frac{d}2-1}^2}
= O \left(\frac1{d^3}\right), \quad d\to \infty.
$$
Therefore,
$$
\log \left(\frac{\la-\de^2}\la\right) = O \left(\frac1{d^3}\right),
$$
and
\begin{eqnarray}
\nonumber
\log \left(\frac{(\la-\de^2)^{d-3}}{\la^{\frac{d}2}}\right) 
= \left(\frac{d}2 - 3\right) \log \la + (d-3) \log \left(\frac{\la-\de^2}\la\right)\\
\label{322}
= \left(\frac{d}2 - 3\right) \log \la + O \left(\frac1{d^2}\right) \\
\ge \left(\frac{d}2 - 3\right)
\left(2 \log j_{\frac{d}2-1} + \frac2d \log \oo_d - \frac2d \log \mes_d \O\right)
+ O \left(\frac1{d^2}\right), \quad d\to \infty,
\nonumber
\end{eqnarray}
where we used \eqref{321}.

We have also
$$
\log \oo_d = \frac{d}2 \log \pi - \log \G \left(\frac{d}2 + 1\right)
= \frac{d}2 \log \pi - \frac{d}2 \left(\log d - \log 2 - 1\right) + O (\log d), \quad d\to \infty.
$$
Substituting this and \eqref{24} in \eqref{322} we obtain
\begin{eqnarray}
\nonumber
\log \left(\frac{(\la-\de^2)^{d-3}}{\la^{\frac{d}2}}\right) 
\ge (d-6) \left(\log j_{\frac{d}2-1} + \frac1d \log \oo_d - \frac1d \log \mes_d \O\right)
+ O \left(\frac1{d^2}\right) \\
\label{323}
= \frac{d \log d}2 + \frac{d}2 \left(\log \pi - \log 2 +1\right)
+ |a_1| 2^{1/3} d^{1/3} - \frac{d-6}d \, \log \mes_d \O + O (\log d), \\
d\to \infty.
\nonumber
\end{eqnarray}
Next,
\begin{equation}
\label{324}
\log \G \left(\frac{d}2\right) 
= \frac{d \log d}2 - \frac{d}2 \left(\log 2 +1\right) + O (\log d), 
\end{equation}
\begin{eqnarray}
\nonumber
\log\left(\G \left(\frac{d+1}2\right)^{2 - \frac1{d-1}}\right)
= \left(2 - \frac1{d-1}\right) \log \G \left(\frac{d+1}2\right) \\
\label{325}
= d \log d - d \left(\log 2 + 1\right) + O (\log d), \quad d \to \infty.
\end{eqnarray}
Finally, 
\begin{eqnarray}
\nonumber
\log \left(\frac{(\mes_d \O)^{\frac{d-2}{d-1}}}{w(\O)^{5 - \frac1{d-1}}}\right)
= \frac{d-2}{d-1} \log \mes_d \O 
- \frac{5d-6}{d(d-1)} \log \left(d! \mes_d \O\right) \\
= \frac{d-6}d \, \log \mes_d \O + O (\log d), \quad d \to \infty.
\label{326}
\end{eqnarray}
Substituting \eqref{323}, \eqref{324}, \eqref{325} and \eqref{326} into \eqref{320} we get
\begin{eqnarray*}
\log \Psi (d, k, \O) 
\ge  \frac{d \log d}2 + \frac{d}2 \left(\log \pi - \log 2 +1\right)
+ |a_1| 2^{1/3} d^{1/3} - \frac{d-6}d \, \log \mes_d \O \\
+ \frac{d \log d}2 - \frac{d}2 \left(\log 2 +1\right) 
- d \log 2 - \frac{d}2 \log \pi \\
- d \log d + d \left(\log 2 + 1\right) 
+ \frac{d-6}d \, \log \mes_d \O + O (\log d) \\
= \left(1 - \log 2\right) d + |a_1| 2^{1/3} d^{1/3} + O (\log d), \quad d \to \infty.
\end{eqnarray*}
Therefore,
$$
\log \Psi (d, k, \O) \ge \left(1 - \log 2\right) d
$$
for sufficiently large $d$.
Taking into account \eqref{*} we see that there is a constant $C_{conv} > 0$ such that
$$
\Psi (d,k,\O) \ge C_{conv} \left(\frac{e}2\right)^d \quad \text{for all}\ k\ \text{and}\ \O.
\qquad \qed
$$

%%%%%%%%%%%%%%%%%%%%%%%%%%%%%%%%%%%%%%%%%%
\section{Ball}
In this section we prove Theorem \ref{t16}.
We use some ideas from \cite{CMS}.

Let $\O = B_1 (0) \subset \R^d$ be a unit ball centered at the origine.
The first Dirichlet eigenvalue is 
$\la_1 = j_{\frac{d}2-1}^2$, see \eqref{21}.
The first eigenfunction of the Neumann problem is $\psi_1 (x) \equiv 1$
with the eigenvalue $\mu_1 = 0$.
Other eigenfunctions are
$$
\psi_{m,k} (x) = r^{1-\frac{d}2} J_{\frac{d}2 + m -1} \left(p_{d,m,k} r\right) Y_m (\oo),
\quad m \in \N_0, \ k \in \N.
$$
Here $x = (r; \oo)$ are spherical coordinates in $\R^d$,
$Y_m$ are sphercial harmonics,
and $p_{d,m,k}$ is the $k$-th positive root of the function
\begin{equation}
\label{41}
\left(r^{1-\frac{d}2} J_{\frac{d}2 + m -1} (r)\right)'
= r^{-\frac{d}2} \left(r J_{\frac{d}2 + m -1}' (r) 
+ \left(1-\frac{d}2\right) J_{\frac{d}2 + m -1} (r)\right).
\end{equation}
The corresponding eigenvalues are 
$$
\mu_{m,k} = \left(p_{d,m,k}\right)^2
\quad \text{with multiplicity} \quad 
\ka_m := \left( \begin{array}{cc} m + d - 1 \\ d-1 \end{array} \right) 
- \left( \begin{array}{cc} m + d - 3 \\ d-1 \end{array} \right) ,
$$
where we assume $\left( \begin{array}{cc} p \\ q \end{array} \right) = 0$
if $p < q$.

By virtue of Dixon's theorem \cite[\S 15.23]{Watson} the roots of the functions
$$
r J_{\frac{d}2 + m -1}' (r) + \left(1-\frac{d}2\right) J_{\frac{d}2 + m -1} (r)
\quad \text{and} \quad J_{\frac{d}2 + m -1} (r)
$$
interlace.
Thus, for all natural $m$ we have $p_{d,m,2} > j_{\frac{d}2+m-1}$
due to \eqref{41}.
Next, $j_{\nu+1} > j_\nu$, see for example \cite[\S 15.22]{Watson},
and therefore, $p_{d,m,2} > j_{\frac{d}2-1}$.
So, the set of eigenvalues $\mu_{m,k}$ that lesser than $\la_1$
consists of the eigenvalues of type $\mu_{m,1} = p_{d,m,1}^2$ only.
Note also that for $m=0$
$$
\left(r^{1-d/2} J_{\frac{d}2-1} (r)\right)' = - r^{-d/2} J_{\frac{d}2} (r),
$$
so, $p_{d,0,1} = j_{\frac{d}2} > j_{\frac{d}2-1}$.
The numbers $p_{d,m,1}$ increase in $m$ if $m \ge 1$, see \eqref{ratio} below.
Denote 
\begin{equation}
\label{42}
M = M(d) = \max \left\{m \in \N : p_{d,m,1} \le j_{\frac{d}2-1}\right\}.
\end{equation}
Taking into account the first Neumann eigenvalue $\mu_1=0$ we get
$$
\Phi (d,1,B) = 1 + \sum_{m=1}^M \ka_m = \sum_{m=0}^M \ka_m
= \left( \begin{array}{cc} M + d - 1 \\ d-1 \end{array} \right)  
+ \left( \begin{array}{cc} M + d - 2 \\ d-1 \end{array} \right) .
$$
This equality implies
$$
\left( \begin{array}{cc} M + d - 1 \\ d-1 \end{array} \right) 
< \Phi (d,1,B) < 2 \left( \begin{array}{cc} M + d - 1 \\ d-1 \end{array} \right) ,
$$
and
\begin{equation}
\label{43}
\log \Phi (d,1,B) = \log \left( \begin{array}{cc} M + d - 1 \\ d-1 \end{array} \right) 
+ O (1), \quad d \to \infty.
\end{equation}

In order to find the asymptotics of $M(d)$ for large $d$ we need 
to describe the behaviour of $p_{d,m,1}$.
If $m \ge 1$ the numbers $p_{d,m,1}^2$ coincide with the minima
of the following ratio of quadratic forms
\begin{equation}
\label{ratio}
p_{d,m,1}^2 = \min \frac{\int_0^1\left(|f'(x)|^2 x^{d-1} 
+ m (m+d-2) |f(x)|^2 x^{d-3}\right) dx}
{\int_0^1|f(x)|^2 x^{d-1} dx},
\end{equation}
defined on the set
\begin{equation*}
\label{dom}
\dom = \left\{ f : 
\int_0^1\left(|f'(x)|^2 x^{d-1} + m (m+d-2) |f(x)|^2 x^{d-3} \right) < \infty \right\}.
\end{equation*}

\begin{lemma}
If $m\ge 1$ and $d\ge 3$ then
\begin{equation}
\label{45}
m(m+d-2) \le p_{d,m,1}^2 \le \frac{dm(m+d-2)}{d-2}.
\end{equation}
\end{lemma}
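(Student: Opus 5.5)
Both bounds follow from the variational characterization \eqref{ratio}, one from comparing weights in the Rayleigh quotient and one from a trial function.

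\emph{Lower bound.} For $x\in(0,1]$ and $d\ge 3$ we have $x^{d-3}\ge x^{d-1}$. Hence for every admissible $f$
$$
\int_0^1\left(|f'|^2 x^{d-1} + m(m+d-2)|f|^2 x^{d-3}\right)dx \ge m(m+d-2)\int_0^1 |f|^2 x^{d-1}dx .
$$
Taking the minimum in \eqref{ratio} gives $p_{d,m,1}^2\ge m(m+d-2)$.

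\emph{Upper bound.} We insert a test function, and the natural choice is $f(x)=x^m$, the radial profile of the harmonic polynomial $r^mY_m$. It lies in $\dom$ since $m\ge1$. We need three integrals:
$$
\int_0^1 |f'|^2x^{d-1}dx=\frac{m^2}{2m+d-2},\qquad
\int_0^1 |f|^2x^{d-3}dx=\frac{1}{2m+d-2},\qquad
\int_0^1|f|^2x^{d-1}dx=\frac1{2m+d}.
$$
The numerator of the quotient is then
$$
\frac{m^2+m(m+d-2)}{2m+d-2}=\frac{m(2m+d-2)}{2m+d-2}=m .
$$
Therefore
$$
p_{d,m,1}^2\le m(2m+d).
$$

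It remains to compare this with the claimed bound, that is, to check $m(2m+d)\le \frac{d\,m(m+d-2)}{d-2}$. Multiplying out, this is equivalent to $(d-2)(2m+d)\le d(m+d-2)$, i.e. $2m(d-2)+d^2-2d\le dm+d^2-2d$, i.e. $m(d-4)\le 0$. This holds only for $d\le4$.

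So the trial function $x^m$ alone does not prove the upper bound for general $d$. The comparison above is the main obstacle, and a better trial function is needed. The fallback I would try is the one-parameter family $f=x^{\alpha}$, minimizing the resulting quotient over $\alpha$, or a two-term combination $x^m(1+cx^2)$, choosing the parameter so that the bound $\frac{d}{d-2}m(m+d-2)$ follows. I expect this to work because the stated bound is clearly the output of such an elementary test-function computation.
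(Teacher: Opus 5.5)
Your lower bound is exactly the paper's argument. The upper bound, however, is not proved. As you compute yourself, the trial function $x^m$ only gives $p_{d,m,1}^2\le m(2m+d)$, which is weaker than the claim for every $d>4$. The proposal then stops at a plan for finding a better trial function, so the second inequality is not established by what you have written.

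The missing step is to take $f\equiv 1$, which is the $\alpha=0$ member of your own family $x^\alpha$. This is what the paper does.
\begin{itemize}
\item It is admissible precisely because $d\ge 3$, since then $\int_0^1 x^{d-3}\,dx=\frac1{d-2}<\infty$.
\item Its derivative term vanishes, so the numerator is $\frac{m(m+d-2)}{d-2}$ and the denominator is $\frac1d$.
\item This gives $p_{d,m,1}^2\le\frac{d\,m(m+d-2)}{d-2}$ at once.
\end{itemize}
The constant pays nothing for the derivative; its only loss is the factor $\frac{\int_0^1x^{d-3}dx}{\int_0^1x^{d-1}dx}=\frac{d}{d-2}$ in the potential term, which is exactly the shape of the stated bound. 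Optimizing over $\alpha$ would even give something slightly better, since $Q(\alpha)=(\alpha^2+m(m+d-2))\frac{2\alpha+d}{2\alpha+d-2}$ has $Q'(0)=-\frac{4m(m+d-2)}{(d-2)^2}<0$. That is not needed, though.

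Your other fallback, $x^m(1+cx^2)$, points the wrong way and cannot work uniformly in $m$. For fixed $d$ and $m\to\infty$, all combinations of $x^m$ and $x^{m+2}$ are concentrated in a boundary layer of width $O(1/m)$ at $x=1$. A rescaling computation shows their quotient is at least $(4-2\sqrt2+o(1))\,m^2$. For $d\ge 14$ and large $m$, this exceeds $\frac{d}{d-2}\,m(m+d-2)$.
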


\begin{proof}
Clearly, the second term in the numerator of \eqref{ratio} satisfies
$$
\int_0^1 |f(x)|^2 x^{d-3} dx \ge \int_0^1|f(x)|^2 x^{d-1} dx ,
$$
which implies the first inequality \eqref{45}.

Next, we substitute $f(x) \equiv 1$ in \eqref{ratio}.
We get
$$
\int_0^1\left(|f'(x)|^2 x^{d-1} + m (m+d-2) |f(x)|^2 x^{d-3}\right) dx
%= m (m+d-2) \int_0^1 x^{d-3} dx 
= \frac{m(m+d-2)}{d-2},
$$
$$
\int_0^1|f(x)|^2 x^{d-1} dx = \frac1d,
$$
which implies the claim.
\end{proof}

\begin{rem}
The lower bound \eqref{45} is just the same as (1'') in \cite{LS}.
The upper bound \eqref{45} is better than (1) in \cite{LS} for $d \ge 3$.
\end{rem}

The asymptotics \eqref{23} yields
$$
\la_1 = j_{\frac{d}2-1}^2 = \frac{d^2}4 + |a_1| 2^{-2/3} d^{4/3} + O (d),
\quad d \to \infty.
$$
The equations
$$
m (m+d-2) = \la_1 \quad \text{and} \quad
m (m+d-2) = \frac{(d-2)\la_1}{d}
$$
have positive solutions
$$
m^* = \frac12 \left(\sqrt{4\la_1 + (d-2)^2} - d + 2\right)
\quad \text{and} \quad
m_* = \frac12 \left(\sqrt{\frac{4\la_1(d-2)}{d} + (d-2)^2} - d + 2\right)
$$
respectively.
Therefore, the number $M(d)$ defined by \eqref{42} satisfies the bounds
$$
m_*-1 < M (d) \le m^*.
$$
Furthermore,
$$
m^* = \frac12 \left(\sqrt{2 d^2 + |a_1| 2^{4/3} d^{4/3} + O (d)} - d + 2\right)
= \frac{\sqrt 2 - 1}2 \, d + |a_1| 2^{-7/6} d^{1/3} + O (1), \quad d \to \infty.
$$
In the same way,
$$
m_* = \frac{\sqrt 2 - 1}2 \, d + |a_1| 2^{-7/6} d^{1/3} + O (1), \quad d \to \infty,
$$
and thus, the same is true for $M$,
$$
M (d) = \frac{\sqrt 2 - 1}2 \, d + |a_1| 2^{-7/6} d^{1/3} + O (1), \quad d \to \infty.
$$
Clearly,
\begin{equation}
\label{46}
\log \left( \begin{array}{cc} M + d - 1 \\ d-1 \end{array} \right) 
= \log \left((M+d-1)!\right) - \log \left(M!\right) - \log\left((d-1)!\right).
\end{equation}

The Stirling formula implies the following

\begin{lemma}
\label{l42}
If 
$$
K(d) = \ga_0 d + \ga_1 d^{1/3} + O(1), \quad d \to \infty, \quad \ga_0 > 0,
$$
then
$$
\log \left(K(d)!\right) = \ga_0 d \log d + \ga_0 (\log \ga_0 -1) d 
+ \ga_1 d^{1/3} \log d + \ga_1 \log \ga_0 d^{1/3} + O (\log d), \quad d \to \infty.
$$
\end{lemma}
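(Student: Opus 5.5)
We apply Stirling's formula in the form $\log(n!) = n\log n - n + O(\log n)$, valid for integers $n\to\infty$, with $n = K(d)$. (If $K(d)$ is not an integer one uses $\log\G(K+1)$, which has the same expansion.) Since $\ga_0>0$, we have $K(d)\to\infty$ and $K(d)\asymp d$, so the remainder is $O(\log K) = O(\log d)$. It therefore suffices to expand $K\log K - K$ up to an error $O(\log d)$.

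Write $K = \ga_0 d\,(1+\tau)$ with
$$
\tau = \frac{\ga_1}{\ga_0}\, d^{-2/3} + O(d^{-1}).
$$
Then
$$
\log K = \log d + \log\ga_0 + \tau + O(\tau^2) = \log d + \log\ga_0 + \frac{\ga_1}{\ga_0} d^{-2/3} + O(d^{-1}),
$$
since $\tau^2 = O(d^{-4/3})$, which is absorbed into $O(d^{-1})$. Multiplying by $K = \ga_0 d + \ga_1 d^{1/3} + O(1)$ and collecting terms gives
$$
K\log K = \ga_0 d\log d + \ga_0\log\ga_0\, d + \ga_1 d^{1/3} + \ga_1 d^{1/3}\log d + \ga_1\log\ga_0\, d^{1/3} + O(\log d).
$$
Here the cross term $\ga_1 d^{1/3}\cdot \frac{\ga_1}{\ga_0} d^{-2/3}$ is $O(d^{-1/3})$, the product of $\ga_0 d$ with $O(d^{-1})$ is $O(1)$, and the $O(1)$ part of $K$ times $\log K$ is $O(\log d)$.

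Subtracting $K = \ga_0 d + \ga_1 d^{1/3} + O(1)$ cancels the $\ga_1 d^{1/3}$ term and turns $\ga_0\log\ga_0\, d$ into $\ga_0(\log\ga_0-1)\,d$. This yields exactly the claimed expansion
$$
\log(K!) = \ga_0 d\log d + \ga_0(\log\ga_0-1)\,d + \ga_1 d^{1/3}\log d + \ga_1\log\ga_0\, d^{1/3} + O(\log d).
$$

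The only delicate step is the bookkeeping of error terms. We must check that every neglected product is $O(\log d)$; in particular, the $O(1)$ term of $K$ contributes $O(1)\cdot\log K = O(\log d)$, and this is the reason the final error is $O(\log d)$ rather than $O(1)$. All the other neglected terms are $O(1)$ or smaller.
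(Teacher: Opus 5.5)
Your proposal is correct and follows the same route as the paper, which simply states that the lemma follows from Stirling's formula. Your expansion of $K\log K - K$ with $K=\ga_0 d(1+\tau)$ supplies the omitted bookkeeping correctly, including the cancellation of the $\ga_1 d^{1/3}$ terms and the $O(1)\cdot\log K = O(\log d)$ error term.
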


Therefore,
\begin{eqnarray}
\nonumber
\log \left((M+d-1)!\right) 
= \frac{\sqrt 2 + 1}2\, d \log d + \frac{\sqrt 2 + 1}2 \left(\log \frac{\sqrt 2 + 1}2 - 1\right) d \\
+ |a_1| 2^{-7/6} d^{1/3} \log d + |a_1| 2^{-7/6} \log \frac{\sqrt 2 + 1}2 d^{1/3} + O(\log d),
\label{47}
\end{eqnarray}
\begin{eqnarray}
\nonumber
\log \left(M!\right) 
= \frac{\sqrt 2 - 1}2\, d \log d + \frac{\sqrt 2 - 1}2 \left(\log \frac{\sqrt 2 - 1}2 - 1\right) d \\
+ |a_1| 2^{-7/6} d^{1/3} \log d + |a_1| 2^{-7/6} \log \frac{\sqrt 2 - 1}2 d^{1/3} + O(\log d),
\label{48}
\end{eqnarray}
and
\begin{equation}
\label{49}
\log \left((d-1)!\right)
= d \log d - d + O (\log d), \quad d \to \infty.
\end{equation}
Substituting \eqref{46}, \eqref{47}, \eqref{48}, \eqref{49} into \eqref{43} we get
\begin{equation}
\label{410}
\log \Phi (d,1,B) 
= \left(\sqrt 2 \log (\sqrt 2 + 1) - \log 2\right) d
+ |a_1| 2^{-1/6} \log (\sqrt 2 + 1) d^{1/3} + O (\log d), \quad d \to \infty,
\end{equation}
where $a_1$ is the first zero of the Airy function.
Theorem \ref{t16} is proved.
\qed

%%%%%%%%%%%%%%%%%%%%%%%%%%%%%%%%%%%%%%%%%%

\end{document}